\documentclass[11pt,letterpaper]{amsart}
\usepackage{amsthm}
\usepackage{amsmath}
\usepackage{amssymb}
\usepackage{comment}
\usepackage{tikz}
\usepackage{enumitem}
\usepackage{amsfonts}
\usepackage{mathtools}
\usepackage{hyperref}
\usepackage{todonotes}
\usepackage[utf8]{inputenc}

\DeclareMathOperator{\fl}{fl}
\DeclareMathOperator{\mh}{mHeight}
\DeclareMathOperator{\mcontent}{mCont}
\DeclareMathOperator{\supp}{supp}
\DeclareMathOperator{\inv}{Inv}
\DeclareMathOperator{\Lie}{Lie}
\DeclareMathOperator{\spn}{span}
\DeclareMathOperator{\mg}{mag}
\DeclareMathOperator{\sgn}{sign}

\newcommand{\ph}{\varphi}

\newcommand{\bs}{\boldsymbol}

\newcommand{\e}{\varepsilon}

\newcommand{\SL}{\mathrm{SL}}
\newcommand{\SO}{\mathrm{SO}}
\newcommand{\R}{\mathbb{R}}
\newcommand{\Z}{\mathbb{Z}}

\renewcommand{\S}{\mathfrak{S}}
\newcommand{\D}{\mathfrak{D}}

\newcommand\precdot{\mathrel{\ooalign{$\prec$\cr
  \hidewidth\raise0.001ex\hbox{$\cdot\mkern0.6mu$}\cr}}}

\newtheorem{theorem}{Theorem}[section]
\newtheorem{def-prop}[theorem]{Definition-Proposition}
\newtheorem{prop}[theorem]{Proposition}
\newtheorem{conj}[theorem]{Conjecture}
\newtheorem{lemma}[theorem]{Lemma}
\newtheorem{claim}[theorem]{Claim}

\theoremstyle{definition}
\newtheorem{ex}[theorem]{Example}

\newtheorem{defin}[theorem]{Definition}

\theoremstyle{remark}
\newtheorem*{remark}{Remark}

\begin{document}
\title{On the minimal power of $q$ in a Kazhdan--Lusztig polynomial}

\author{Christian Gaetz}
\address[Gaetz]{Department of Mathematics, University of California, Berkeley, CA, USA.}
\email{\href{mailto:gaetz@berkeley.edu}{{\tt gaetz@berkeley.edu}}}

\author{Yibo Gao}
\address[Gao]{Beijing International Center for Mathematical Research, Peking University, Beijing, China.}
\email{\href{mailto:gaoyibo@bicmr.pku.edu.cn}{{\tt gaoyibo@bicmr.pku.edu.cn}}}

\date{\today}

\begin{abstract}
For $w$ in the symmetric group, we provide an exact formula for the smallest positive power $q^{h(w)}$ appearing in the Kazhdan--Lusztig polynomial $P_{e,w}(q)$. We also provide a tight upper bound on $h(w)$ in simply-laced types, resolving a conjecture of Billey--Postnikov from 2002.
\end{abstract}
\keywords{Kazhdan--Lusztig polynomial, Schubert variety, singular locus, intersection cohomology, Bruhat order, Billey--Postnikov decomposition}
\maketitle

\section{Introduction}
\label{sec:intro}
Let $G$ be a simply connected semisimple complex Lie group, with Borel subgroup $B$ containing maximal torus $T$ and corresponding Weyl group $W$. The Bruhat decomposition $G=\bigsqcup_{w \in W} BwB$ gives rise to the \emph{Schubert varieties} $X_w \coloneqq \overline{BwB/B}$ inside the flag variety $G/B$, whose containments determine the Bruhat order on $W$ ($y \leq w$ if $X_y \subset X_w$). The \emph{Kazhdan--Lusztig polynomials} $P_{y,w}(q) \in \mathbb{Z}[q]$ have since their discovery \cite{Kazhdan-Lusztig-1} proven to underlie deep connections between canonical bases of Hecke algebras, singularities of Schubert varieties \cite{Kazhdan-Lusztig-2}, and representations of Lie algebras \cite{Beilinson-Bernstein, Brylinski-Kashiwara}.

\begin{theorem}[Kazhdan and Lusztig \cite{Kazhdan-Lusztig-2}]
\label{thm:IH-interpretation-of-kl-polys}
For $y \leq w$, let $IH^*(X_w)_y$ denote the local intersection cohomology of $X_w$ at the $T$-fixed point $yB$, then 
\[ P_{y,w}(q) = \sum_i \dim(IH^{2i}(X_w)_y) q^i.\]
\end{theorem}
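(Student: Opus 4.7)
The plan is to define polynomials $\tilde P_{y,w}(q) \coloneqq \sum_i \dim IH^{2i}(X_w)_y \cdot q^i$ (with the parity vanishing $IH^{2i+1}(X_w)_y=0$ to be established as part of the argument) and show these satisfy the three properties that uniquely characterize the Kazhdan--Lusztig polynomials: (i) $\tilde P_{w,w}=1$, (ii) $\deg \tilde P_{y,w}\leq (\ell(w)-\ell(y)-1)/2$ for $y<w$, and (iii) the recursion obtained from multiplication by the generator $C_s$ in the Hecke algebra. Since these axioms determine $P_{y,w}$, matching them would finish the proof.

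Properties (i) and (ii) I would extract essentially from the definition of the intersection cohomology complex $IC(X_w)$: smoothness of $X_w$ along the open Schubert cell $BwB/B$ gives $IH^*(X_w)_w=H^*(\mathrm{pt})$, while the Deligne--Goresky--MacPherson support conditions $\dim \mathrm{supp}\,\mathcal H^{-k}(IC(X_w))<k$ translate after the appropriate shift exactly into the degree bound. For the recursion (iii), I would choose a simple reflection $s$ with $\ell(ws)<\ell(w)$ and use the $\mathbb P^1$-fibration $\pi\colon G/B\to G/P_s$. Since $X_w=\pi^{-1}\pi(X_{ws})$, applying $R\pi_*$ to $IC(X_{ws})$ and then $\pi^*$ produces a complex whose stalks encode both $\tilde P_{y,w}$ and $\tilde P_{ys,w}$; decomposing it via the Beilinson--Bernstein--Deligne--Gabber decomposition theorem into a sum of shifts of IC sheaves of smaller Schubert varieties yields a recursion for $\tilde P_{y,w}$ in terms of lower-length data, which I would then compare term-by-term with the Hecke algebra recursion satisfied by the $P_{y,w}$.

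The decisive and hardest step is the parity vanishing $IH^{\mathrm{odd}}(X_w)_y=0$, without which $\tilde P_{y,w}$ is a priori only a polynomial in $q^{1/2}$. The plan here is to leave the complex-analytic setting: reduce mod $p$ and work with $\ell$-adic étale intersection cohomology over $\overline{\mathbb F_p}$, where Deligne's theory of weights applies. A Bott--Samelson resolution $\mu\colon Z\to X_w$ has source an iterated $\mathbb P^1$-bundle, whose étale cohomology is pure and concentrated in even degrees. The decomposition theorem splits $R\mu_*\underline{\mathbb Q_\ell}$ into shifts of simple perverse sheaves, one of which is $IC(X_w)$; a weight and parity comparison, iterated over Schubert subvarieties, forces every summand, and hence the stalks of $IC(X_w)$ itself, to live only in even degrees. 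Artin's comparison theorem transfers the vanishing back to singular local intersection cohomology of $X_w$ over $\mathbb C$. This purity/parity argument is the conceptual heart of the proof and the main obstacle; the rest of the proof, given the Decomposition Theorem and the axiomatic characterization of $P_{y,w}$, is essentially bookkeeping.
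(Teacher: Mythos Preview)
The paper does not give a proof of this statement; Theorem~\ref{thm:IH-interpretation-of-kl-polys} is simply quoted from Kazhdan--Lusztig \cite{Kazhdan-Lusztig-2} as background and is used only as a tool (to justify nonnegativity of coefficients and, via Bj\"orner--Ekedahl, the Poincar\'e-polynomial interpretation of $h(w)$). So there is nothing in the paper to compare your argument against.

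That said, your outline is essentially the classical Kazhdan--Lusztig/Springer argument: identify the stalk Poincar\'e polynomials of $IC(X_w)$ as a family satisfying the defining normalization, support, and Hecke-algebra recursion for $P_{y,w}$, and invoke uniqueness. The purity/parity step via reduction mod $p$, Frobenius weights, and the decomposition theorem applied to a Bott--Samelson resolution is exactly the standard mechanism. One small quibble: rather than literally verifying the $R$-polynomial identity in Definition~\ref{def:kl-polys}, the usual route is to show that $\sum_y \tilde P_{y,w}(q)\,T_y$ is bar-invariant in the Hecke algebra (via the $\mathbb{P}^1$-bundle/decomposition theorem recursion you describe) and then appeal to uniqueness of the canonical basis; this is equivalent but cleaner than matching the $R$-recursion directly. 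None of this, however, is relevant to the present paper, which takes Theorem~\ref{thm:IH-interpretation-of-kl-polys} as an input.
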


Theorem~\ref{thm:IH-interpretation-of-kl-polys} implies that $P_{y,w}(q)$ has nonnegative coefficients, a property which is completely obscured by their recursive definition (see Definition~\ref{def:kl-polys}); this was proven for arbitrary Coxeter groups $W$ by Elias and Williamson \cite{Elias-Williamson}. It is known that for all $y \leq w$ one has $P_{y,w}(0)=1$.

\begin{theorem}[Deodhar \cite{Deodhar-smooth}; Peterson (see \cite{Carrell-Kuttler-1})]
\label{thm:smooth-iff-kl-poly-trivial}
Suppose $G$ is simply-laced and $y \leq w$, then $X_w$ is smooth at $yB$ if and only if $P_{y,w}(q)=1$. In particular, $X_w$ is a smooth variety if and only if $P_{e,w}(q)=1$.
\end{theorem}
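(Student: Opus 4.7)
The plan is to use Theorem~\ref{thm:IH-interpretation-of-kl-polys} to translate the statement into one about local intersection cohomology, and then to invoke Peterson's theorem that in simply-laced types rational smoothness of a Schubert variety at a $T$-fixed point coincides with smoothness.

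For the direction smooth $\Rightarrow P_{y,w}(q)=1$: if $X_w$ is smooth at $yB$, then on an analytic neighborhood of $yB$ the IC complex $\mathcal{IC}_{X_w}$ agrees (up to a degree shift) with the constant sheaf, so $IH^{2i}(X_w)_y$ vanishes for $i \geq 1$, and Theorem~\ref{thm:IH-interpretation-of-kl-polys} gives $P_{y,w}(q)=1$. The ``in particular'' clause is immediate: the singular locus of $X_w$ is closed and $B$-stable, so if non-empty it contains $\overline{BeB/B}=\{eB\}$, and hence $X_w$ is smooth iff it is smooth at $eB$.

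For the direction $P_{y,w}(q)=1 \Rightarrow$ smooth at $yB$: I would first promote the hypothesis to \emph{rational smoothness} at $yB$, which by the Kazhdan--Lusztig criterion amounts to $P_{x,w}(q)=1$ for all $x \in [y,w]$. This follows by a standard upper semicontinuity argument for constructible sheaves: the stalks of $\mathcal{H}^i(\mathcal{IC}_w)$ can only grow under specialization, and $yB$ is a specialization of every $xB$ with $x \geq y$, so if the stalk at $yB$ is minimal (as dictated by $P_{y,w}=1$) the same is true at each such $x$. Next, I apply Peterson's theorem (see~\cite{Carrell-Kuttler-1}): the $T$-weights on $T_{yB}X_w$ are indexed by reflections $s_\alpha$ with $y s_\alpha \leq w$; in simply-laced types the corresponding weights are distinct, giving $\dim T_{yB}X_w = \#\{\alpha > 0 : y s_\alpha \leq w\}$. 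Rational smoothness forces this cardinality to equal $\ell(w)=\dim X_w$, so $X_w$ is smooth at $yB$.

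The main obstacle is the simply-laced step. In non-simply-laced types two distinct reflections at $y$ can contribute the same $T$-weight to $T_{yB}X_w$, causing $\dim T_{yB}X_w$ to be strictly smaller than the cardinality count; this breaks the equivalence and is the source of the classical non-simply-laced counterexamples, explaining why the hypothesis enters essentially.
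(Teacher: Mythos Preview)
The paper does not supply its own proof of Theorem~\ref{thm:smooth-iff-kl-poly-trivial}; it is quoted as a known result from Deodhar and from Peterson (as recorded by Carrell--Kuttler). Your outline is the standard one and its logical skeleton is correct: the forward direction is immediate from Theorem~\ref{thm:IH-interpretation-of-kl-polys}; for the converse you promote $P_{y,w}=1$ to $P_{x,w}=1$ for all $x\in[y,w]$ (this is precisely the Braden--MacPherson monotonicity that the paper itself cites immediately after defining $h(w)$, rather than a general ``upper semicontinuity of constructible sheaves'' principle), conclude rational smoothness along $[y,w]$, and then invoke Peterson.

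One point deserves correction. Your explanation of why the simply-laced hypothesis enters is not right. For distinct positive roots $\alpha$ the weights $y(\alpha)$ on $T_{yB}(G/B)$ are \emph{always} pairwise distinct, in every type; two reflections never ``contribute the same $T$-weight.'' The actual content of Peterson's result (in the form given in \cite{Carrell-Kuttler-1}) is that in simply-laced type the Zariski tangent space $T_{yB}X_w$ coincides with the span of the tangent lines to the $T$-stable curves through $yB$ that lie in $X_w$, so that $\dim T_{yB}X_w=\#\{\alpha\in\Phi^+: ys_\alpha\le w\}$. What fails outside simply-laced type is this equality: the tangent space can strictly contain that span, i.e.\ there can exist a root direction in $T_{yB}X_w$ with $ys_\alpha\not\le w$ (the classical $B_2$ example has $\dim T_{eB}X_w=4>3=\#\{t\in R:t\le w\}=\ell(w)$). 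Once you replace the ``distinct weights'' justification with this statement, your chain of implications is sound.
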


In light of Theorem~\ref{thm:IH-interpretation-of-kl-polys}, one would like to understand $P_{y,w}(q)$ explicitly enough to determine which coefficients vanish. Indeed, the view of the $P_{y,w}$ as a measure of the failure of local Poincar\'{e} duality in $X_w$ was among the original motivations for their introduction \cite{Kazhdan-Lusztig-1}. Unfortunately $P_{y,w}$ may be arbitrarily complicated \cite{Polo} and the explicit formulae \cite{Brenti-combinatorial-formula} which exist involve cancellation, and are thus not well-suited to this problem. If $X_w$ is singular (as is true generically) one could at least ask for the smallest nontrivial coefficient, the first degree in which Poincar\'{e} duality fails. Writing $[q^i]P_{y,w}$ for the coefficient of $q^i$ in $P_{y,w}(q)$, define:
\begin{align*}
    h(w) &\coloneqq \min \{i>0 \mid [q^i]P_{e,w} \neq 0 \}, \\
    &= \min_{y \leq w} \min \{i>0 \mid [q^i]P_{y,w} \neq 0\}, \\
    &= \min_{y \leq w} \min \{i>0 \mid IH^{2i}(X_w)_y \neq 0\}.
\end{align*}
 The second equality follows from the surjection $IH^*(X_w)_x \twoheadrightarrow IH^*(X_w)_y$ for $x \leq y \leq w$ constructed by Braden and Macpherson \cite{Braden-MacPherson-monotonicity}. We make the convention that $h(w)=+ \infty$ when $X_w$ is smooth.

\begin{conj}[Billey and Postnikov \cite{Billey-Postnikov}]
\label{conj:bp-conjecture}
Let $G$ be simply-laced of rank $r$, and let $w \in W$ such that $X_w \subset G/B$ is singular, then $h(w) \leq r$.
\end{conj}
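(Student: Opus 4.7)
The plan is to induct on the rank $r$ and reduce the general simply-laced case to type $A$, where the exact formula for $h(w)$ established earlier in this paper already gives the bound $h(w) \le n-1 = r$. For $W = \S_n$, this bound should be automatic from the formula: the combinatorial statistic it computes (built from $\mh$/$\mcontent$-type quantities) involves at most $n-1$ positions of the permutation, and is therefore manifestly bounded by $n-1$. The first and easiest step of the proof is to record this observation.

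The first reduction in the inductive step is parabolic. If $\supp(w) = J \subsetneq S$, then $w \in W_J$, and a standard property of Kazhdan--Lusztig polynomials (going back to Deodhar) gives $P^W_{y,w}(q) = P^{W_J}_{y,w}(q)$ for every $y \in W_J$ with $y \le w$. Hence $h_W(w) \le h_{W_J}(w)$, and by Theorem~\ref{thm:smooth-iff-kl-poly-trivial} applied in both $W$ and $W_J$, the Schubert variety $X_w$ is singular in $G_J/B_J$ if and only if it is singular in $G/B$. Since $W_J$ is simply-laced of rank $|J| < r$, the inductive hypothesis yields $h(w) \le |J| < r$, as desired.

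This leaves the case $\supp(w) = S$ in types $D_n,E_6,E_7$, or $E_8$. Here the plan is to exploit a Billey--Postnikov fiber-bundle decomposition with respect to a maximal parabolic $W_J$ of type $A_{r-1}$. Such parabolics always exist in these diagrams: in $D_n$ one removes a fork tip, and in $E_6,E_7,E_8$ one removes the leaf of the short branch. Writing $w = u\cdot v$ with $u \in W^J$ and $v \in W_J$, when the geometric BP condition holds one obtains the factorization $P^W_{e,w}(q) = P^W_{e,u}(q) \cdot P^{W_J}_{e,v}(q)$, and hence $h(w)$ is the minimum of $h_W(u)$ and $h_{W_J}(v)$ over singular factors. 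If the singular factor is $v$, the type-$A$ result forces $h^{W_J}(v) \le |J| = r-1 < r$; if instead $u$ is the singular factor, then $\ell(u)<\ell(w)$ and a secondary induction (on length, or on a finer invariant) should apply.

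The hard part will be handling full-support elements $w$ which admit no non-trivial BP decomposition relative to any choice of maximal parabolic, since for such $w$ the two reductions above both fail. For these I would attempt to construct an explicit $y\le w$ and an explicit nonzero coefficient of degree $\le r$ in $P_{y,w}(q)$ using Deodhar's formula applied to a carefully chosen reduced word: the goal would be to exhibit a distinguished subexpression of defect exactly equal to $|\supp(w)| = r$ by visiting each simple reflection of $S$ once. Verifying that this yields a genuinely non-cancelling contribution is where the simply-laced hypothesis (entering through Theorem~\ref{thm:smooth-iff-kl-poly-trivial}) must be used pointwise, and I expect this to be the technical heart of the argument, likely requiring a Dynkin-diagram-aware case analysis for $D_n$ separately from $E_6,E_7,E_8$.
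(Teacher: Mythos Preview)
Your proposal is an outline rather than a proof, and the gaps you flag yourself are exactly where it fails. Two concrete problems:

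\textbf{(1) The BP reduction does not close.} A BP decomposition with respect to a chosen maximal $J$ need not exist for a given $w$; you acknowledge this, but your fallback for the no-BP case (building a defect-$r$ distinguished subexpression via Deodhar's formula) is speculative and, as stated, has no mechanism to control cancellation. Even when a BP decomposition \emph{does} exist, your ``secondary induction on length'' for the factor $u=w^J$ does not reduce rank: $u$ still lies in $W$ of rank $r$, so you are back where you started. (Also, the KL factorization you invoke involves the \emph{parabolic} polynomial $P^J_{e,w^J}$, not $P^W_{e,u}$; these are different objects.)

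\textbf{(2) Your parabolic reduction is too weak.} Reducing via $\supp(w)\subsetneq S$ only handles elements that already live in a proper parabolic. The paper instead uses Theorem~\ref{thm:h-increases-in-parabolic} (Billey--Braden): $h(w)\le h(w_J)$ for \emph{every} $J$, regardless of support. This is what makes the induction run, because one can pass to $w_J\in W_J$ (rank $|J|$) even when $w$ has full support.

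The paper's route is quite different from yours. It does not factor $P_{e,w}$; it works with the Poincar\'e polynomial $L(w)$ via Bj\"orner--Ekedahl (Theorem~\ref{thm:bjorner-ekedahl}). The core engine is a root-system argument (Lemmas~\ref{lem:relation-corank1}--\ref{lem:split-inversion-all}) showing that the presence of certain patterns forces a linear dependence among the cover labels below $w$, hence $h(w)=1$ or $h(w)\le 2$ (Propositions~\ref{prop:h-equals-1}, \ref{prop:type-D-4231-upper-bound}, \ref{prop:132-bound}). Combined with Theorem~\ref{thm:h-increases-in-parabolic}, this handles all but a single explicit family of $w$ in type $D$ (Proposition~\ref{prop:type-D-avoids-all-but-3412}), whose $h$ is then computed directly from $L^I(u^I)$. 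BP decompositions appear only as a computational tool for these extremal elements, not as the inductive scaffold.
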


Billey and Postnikov's conjecture is somewhat surprising, since $\deg(P_{y,w})$ may be as large as $\frac{1}{2}(\ell(w)-\ell(y)-1)$ which is of the order of $r^2$, where $\ell$ denotes Coxeter length. A constant upper bound on $h(w)$ in certain special infinite Coxeter groups was given in \cite{Richmond-Slofstra-triangle-group}.

The decomposition $X_w = \bigsqcup_{y \leq w} ByB/B$ is an affine paving, with the cell $ByB/B$ having complex dimension $\ell(y)$. We thus have
\[
L(w) \coloneqq \sum_{y \leq w} q^{\ell(y)} = \sum_{j \geq 0} \dim(H^{j}(X_w))q^{j/2},
\]
the Poincar\'{e} polynomial of $X_w$. Bj\"{o}rner and Ekedahl \cite{Bjorner-Ekedahl} gave a precise interpretation of $h(w)$ in terms of $L(w)$, as the smallest homological degree in which Poincar\'{e} duality fails.

\begin{theorem}[Bj\"{o}rner and Ekedahl \cite{Bjorner-Ekedahl}]
\label{thm:bjorner-ekedahl}
For $0 \leq i \leq \ell(w)/2$ we have $[q^i]L(w) \leq [q^{\ell(w)-i}]L(w)$, and 
\[
h(w)=\min \{i\geq 0 \mid [q^i]L(w) < [q^{\ell(w)-i}]L(w)\}.
\]
\end{theorem}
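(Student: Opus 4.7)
The plan is to deduce both claims from Poincaré duality for the intersection cohomology of the projective variety $X_w$, using Theorem~\ref{thm:IH-interpretation-of-kl-polys} to bridge from $L(w)$ to the IH Poincaré polynomial. From the affine stratification $X_w = \bigsqcup_{y \le w} ByB/B$ (with each $ByB/B \cong \mathbb{C}^{\ell(y)}$) and the local IH formula of Theorem~\ref{thm:IH-interpretation-of-kl-polys}, one computes
\[ I(w)(q) \coloneqq \sum_i \dim IH^{2i}(X_w)\,q^i \;=\; \sum_{y \le w} q^{\ell(y)} P_{y,w}(q), \]
and Poincaré duality on the projective $X_w$ of pure complex dimension $\ell(w)$ makes $I(w)$ palindromic of degree $\ell(w)$.

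Setting $D(w) \coloneqq I(w) - L(w) = \sum_{y<w} q^{\ell(y)}(P_{y,w}(q) - 1)$, which has nonnegative coefficients by Theorem~\ref{thm:IH-interpretation-of-kl-polys}, the palindromicity of $I(w) = L(w) + D(w)$ rearranges to
\[ [q^{\ell(w)-i}]L(w) - [q^i]L(w) \;=\; [q^i]D(w) - [q^{\ell(w)-i}]D(w). \]
Thus the inequality of the theorem amounts to $[q^i]D(w) \ge [q^{\ell(w)-i}]D(w)$ for $i \le \ell(w)/2$, and the characterization of $h(w)$ amounts to identifying the first positive $i$ where this becomes strict.

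For the $h(w)$-characterization, I combine (a) the Braden--MacPherson surjections $IH^*(X_w)_e \twoheadrightarrow IH^*(X_w)_y$ noted in the introduction, which force $[q^k]P_{y,w}=0$ for all $y \le w$ and $1 \le k < h(w)$; and (b) the classical degree bound $\deg P_{y,w} \le (\ell(w)-\ell(y)-1)/2$. Part (a) yields $[q^i]D(w)=0$ for $0 < i < h(w)$ and $[q^{h(w)}]D(w) \ge [q^{h(w)}]P_{e,w} > 0$; parts (a) and (b) together force $[q^{\ell(w)-i}]D(w)=0$ for all $i \le h(w)$, since any nonzero contribution $[q^{\ell(w)-i-\ell(y)}]P_{y,w}$ would require $\ell(y) \ge \ell(w)-2i+1$ (from (b)) and $\ell(w)-i-\ell(y) \ge h(w)$ (from (a)), jointly impossible when $i \le h(w)$. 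Hence the first strict inequality occurs exactly at $i = h(w)$.

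The main obstacle will be extending $[q^i]D(w) \ge [q^{\ell(w)-i}]D(w)$ to the full range $0 \le i \le \ell(w)/2$. Each summand $q^{\ell(y)}(P_{y,w}(q)-1)$ is supported in $[\ell(y)+1,(\ell(w)+\ell(y)-1)/2]$, which is bottom-heavy about its own center but not uniformly about $\ell(w)/2$. I would attempt a monomial-level pairing between each $[q^k]P_{y,w}\cdot q^{\ell(y)+k}$ and its reflection $[q^k]P_{y,w}\cdot q^{\ell(w)-\ell(y)-k}$ about $\ell(w)/2$, using the degree bound to show that unmatched reflected monomials are absorbed by nonnegative contributions on the ``low'' side. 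Alternative approaches include inducting on $\ell(w)$ via Schubert subvarieties $X_{w'} \subsetneq X_w$ appearing in the stratification, or passing through the hard-Lefschetz primitive decomposition $I(w) = \sum_k p_k\,q^k(1+q+\cdots+q^{\ell(w)-2k})$ (with $p_k \ge 0$) and tracking how $L(w)$ distributes across these palindromic components. I expect this final bookkeeping to be where the real technical difficulty lies.
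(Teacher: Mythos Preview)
The paper does not prove Theorem~\ref{thm:bjorner-ekedahl}; it is quoted from Bj\"orner--Ekedahl \cite{Bjorner-Ekedahl} and used as a tool, so there is no in-paper argument to compare against.

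On your proposal itself: your derivation of the second assertion (the formula for $h(w)$) is correct and complete. The identity $[q^{\ell(w)-i}]L(w)-[q^i]L(w)=[q^i]D(w)-[q^{\ell(w)-i}]D(w)$ together with your vanishing arguments from Braden--MacPherson and the degree bound cleanly gives equality for $i<h(w)$ and strict inequality at $i=h(w)$, independent of the first assertion.

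The first assertion (top-heaviness of $L(w)$ for all $i\le\ell(w)/2$) is where there is a genuine gap, as you yourself flag. The monomial-pairing idea will not close it: for $y$ with $\ell(y)\ge 2$ the summand $q^{\ell(y)}(P_{y,w}-1)$ can have support strictly above $\ell(w)/2$, and there is no evident combinatorial mechanism forcing these high-degree contributions to be dominated by low-degree ones across different $y$'s. Your hard-Lefschetz suggestion is the correct one and is in fact what Bj\"orner--Ekedahl do: the ample class acts compatibly on $H^*(X_w)$ and $IH^*(X_w)$ via the natural map between them, and hard Lefschetz on $IH^*$ is leveraged to produce an honest injection $H^{2i}(X_w)\hookrightarrow H^{2(\ell(w)-i)}(X_w)$ for $i\le\ell(w)/2$. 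That geometric injection is the missing idea; the palindromicity of $I(w)$ alone, or numerical bookkeeping on $D(w)$, does not suffice.
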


Theorem~\ref{thm:bjorner-ekedahl} will be a useful tool in this work, but cannot be directly used to resolve Conjecture~\ref{conj:bp-conjecture} since it is difficult to compute $[q^i]L(w)$ in general.

Our first main theorem\footnote{An extended abstract summarizing part of this paper appears in the proceedings of FPSAC 2024 \cite{fpsac}.} is a refinement and proof of Conjecture~\ref{conj:bp-conjecture}.
\begin{theorem}
\label{thm:simply-laced-upper-bound}
Let $G$ be simply-laced of rank $r$, and let $w \in W$ such that $X_w \subset G/B$ is singular, then $h(w) \leq r-2$.
\end{theorem}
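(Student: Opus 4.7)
Our plan is to apply Theorem~\ref{thm:bjorner-ekedahl}, which reduces the statement to a combinatorial inequality on the Bruhat interval $[e,w]$: it suffices to exhibit some $i \leq r - 2$ for which $[q^i]L(w) < [q^{\ell(w)-i}]L(w)$.

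We first carry out standard reductions. If $W = W_1 \times W_2$ is reducible with $w = (w_1,w_2)$, then $X_w \cong X_{w_1} \times X_{w_2}$ and $L(w) = L(w_1)\,L(w_2)$, so singularity of $X_w$ forces singularity of some factor; applying the statement inductively to the singular factor (which lies in a simply-laced Weyl group of strictly smaller rank $r_i$) yields $h(w) \leq r_i - 2 \leq r - 2$. If $\supp(w) \subsetneq S$, we replace $W$ by the standard parabolic $W_{\supp(w)}$, in which the Kazhdan--Lusztig polynomials and hence $h(w)$ are unchanged. Together with the easy observation that no singular Schubert variety exists in rank-$\leq 2$ simply-laced types, these reductions allow us to assume $W$ is irreducible simply-laced of rank $r \geq 3$ with $\supp(w) = S$.

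In type $A_{r}$ we invoke the exact formula for $h(w)$ on $S_{r+1}$ announced in the abstract. That formula expresses $h(w)$ through a combinatorial statistic built from the pattern structure of $w$, and inspection of the statistic immediately yields $h(w) \leq (r+1) - 3 = r - 2$ whenever $X_w$ is singular. For the remaining simply-laced types we reduce to type $A$ via a Billey--Postnikov parabolic decomposition: each of $D_r$ ($r \geq 4$), $E_6$, $E_7$, and $E_8$ contains a maximal standard parabolic $W_J$ of type $A_{r-1}$. Writing $w = uv$ with $u \in W^J$, $v \in W_J$, and $\ell(w) = \ell(u) + \ell(v)$, we argue that if $X_v$ is singular inside the flag variety of $W_J$, then the type-$A$ bound $h(v) \leq r - 3$ transfers to $h(w) \leq r - 2$ by lifting a strict inequality $[q^i]L(v) < [q^{\ell(v)-i}]L(v)$ at some $i \leq r - 3$ to the corresponding strict inequality for $L(w)$, using that $[e,w]$ fibers over $[e,u] \times [e,v]$ in a length-compatible way. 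If instead $v$ is smooth, then the singularity of $X_w$ arises from the interaction between the coset piece $u$ and $v$; in that case we construct an explicit element $y \leq w$ of length $\ell(w) - i$ for some $i \leq r - 2$ witnessing the required asymmetry, using the coroot data attached to $u$.

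The principal obstacle we anticipate is this transfer step. The rank-generating function $L(w)$ is not simply the product $L(u)\,L(v)$, so propagating the strict inequality from $L(v)$ to $L(w)$ demands a careful analysis of how $[e,w]$ fibers, via the Billey--Postnikov projection, over the pair $([e,u], [e,v])$. We expect the fibers to carry enough uniform structure for strict asymmetry to lift, but formalizing this simultaneously with the case in which $X_v$ is smooth yet $X_w$ is singular will be the crux, and likely requires a finer invariant than $L(w)$ at each index to track precisely where the new asymmetry is introduced.
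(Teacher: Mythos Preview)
Your reductions to irreducible $W$ with $\supp(w)=S$ are fine, and for type $A$ you may indeed invoke the upper-bound half of Theorem~\ref{thm:type-A-formula} (namely Lemma~\ref{lem:typeA-3412-upper} together with Proposition~\ref{prop:h-equals-1}(i)). The rest of the plan, however, has a genuine gap.

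On the transfer step when $v=w_J$ is singular: the interval $[e,w]$ does not ``fiber over'' $[e,u]\times[e,v]$. What is true is that multiplication gives an \emph{injection} $[e,w^J]^J\times[e,w_J]\hookrightarrow[e,w]$, with equality only when $w=w^Jw_J$ is a BP-decomposition (Theorem~\ref{thm:bp-decomposition-fiber-bundle}); in general the complement is nonempty and can contribute at any length, so a strict inequality in $L(w_J)$ need not lift to one in $L(w)$ at the same index. Fortunately this case has a one-line fix you have missed: Billey--Braden (Theorem~\ref{thm:h-increases-in-parabolic}) gives $h(w)\le h(w_J)$ unconditionally, so if $w_J$ is singular then $h(w)\le h(w_J)\le (r-1)-2=r-3$ and you are done.

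The real gap is the case $w_J$ smooth but $w$ singular. Your sentence ``construct an explicit element $y\le w$ \ldots\ using the coroot data attached to $u$'' is not a plan: it names neither the element nor the mechanism. In $\D_n$ there exist singular $w$ whose type-$A_{n-1}$ parabolic component $w_J$ is smooth for \emph{both} choices $J=S\setminus\{0\}$ and $J=S\setminus\{1\}$, so a single type-$A$ slice cannot see the singularity. The paper handles these via a substantial case analysis: several $\D_n$-specific pattern arguments (Propositions~\ref{prop:h-equals-1}(ii)--(iv), \ref{prop:type-D-4231-upper-bound}, \ref{prop:132-bound}) each forcing $h(w)=1$, $h(w)\le 2$, or $h(w)\le\mg(w)-1$; reduction through \emph{three} different maximal parabolics together with inversion and the diagram automorphism $\varepsilon_D$; and finally a classification (Proposition~\ref{prop:type-D-avoids-all-but-3412}) showing that only the explicit family $u=[n,2,\bar3,\ldots,\overline{n{-}1},\pm1]$ survives, for which $h(u)=2$ is computed directly. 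Your single-parabolic dichotomy lands precisely this $u$ in the ``$v$ smooth'' branch with nothing further to say. For $E_6,E_7,E_8$ the paper does not argue at all but cites the computer verification in \cite{Billey-Postnikov}; a uniform argument of the kind you sketch would be new, and nothing in your proposal indicates how it would go.
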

The bound of $r-2$ is tight when $G$ is a member of the infinite families $\SL_{r+1}$ or $\SO_{2r}$. 

When $G$ is one of the exceptional simply-laced groups of type $E_6,E_7,$ or $E_8$, Theorem~\ref{thm:simply-laced-upper-bound} follows from the computations made by Billey--Postnikov \cite{Billey-Postnikov}. In the case $G=\SL_{n+1}$, the theorem can be derived from the classification of the singular locus of $X_w$ \cite{Billey-Warrington, Cortez-1, Cortez-2, Kassel-Lascoux-Reutenauer, Manivel}. However, in this case we provide a new exact formula for $h(w)$ for any permutation $w$ ($W$ is isomorphic to the symmetric group $\S_{n+1}$). This theorem is phrased in terms of \emph{pattern containment} (see Section~\ref{sec:pattern-conventions}).

\begin{theorem}
\label{thm:type-A-formula}
Let $G=\SL_{n+1}$, and let $w \in W=\S_{n+1}$ such that $X_w \subset G/B$ is singular, then 
\[ h(w)=\begin{cases} 1 &\text{ if $w$ contains $4231$,} \\
\mh(w) &\text{ otherwise,} \end{cases}\]
where $\mh(w)$ denotes the minimum height of a $3412$ pattern in $w$. 
\end{theorem}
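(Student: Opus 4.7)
The plan is to invoke the Björner--Ekedahl reformulation (Theorem~\ref{thm:bjorner-ekedahl}), which translates $h(w)$ into a comparison of rank sizes in the Bruhat interval $[e,w]$: we seek the smallest $i \geq 1$ with $[q^i]L(w) < [q^{\ell(w)-i}]L(w)$, where $[q^i]L(w) = |\{y \leq w : \ell(y) = i\}|$. In particular, $[q^1]L(w) = |\supp(w)|$ (the simple reflections in any reduced expression for $w$), and $[q^{\ell(w)-1}]L(w)$ counts cocovers of $w$, equivalently \emph{Bruhat cover pairs}: pairs $i<j$ with $w(i)>w(j)$ and no $k \in (i,j)$ satisfying $w(j)<w(k)<w(i)$.

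\emph{Case 1: $w$ contains $4231$.} I would prove $h(w)=1$ by showing $|\supp(w)| < \#\{\text{cover pairs of }w\}$. The strategy is to build a natural injection from $\supp(w)$ into the cover pairs, sending each $s_i$ to the unique cover pair crossing the position $i$ that is closest to the descent of $w$ using $s_i$; then a $4231$ pattern at positions $a<b<c<d$ with $w(d)<w(b)<w(c)<w(a)$ produces, via a direct case analysis of the five inversion pairs on those positions, an additional cover pair outside the image of this injection.

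\emph{Case 2: $w$ avoids $4231$ but contains $3412$.} I split into two bounds. For the upper bound $h(w) \leq \mh(w)$, I would use a $3412$ pattern of minimum height at positions $a<b<c<d$ to exhibit explicit elements of $[e,w]$ at length $\ell(w)-\mh(w)$ that exceed the count at the dual length $\mh(w)$; the construction should come from ``pivoting'' the four pattern entries together with the internal positions accounted for in the height. For the lower bound $h(w) \geq \mh(w)$, I must show $[q^i]L(w)=[q^{\ell(w)-i}]L(w)$ for every $i<\mh(w)$. The plan is to construct rank-reversing bijections on $[e,w]$ in those low degrees, proceeding by induction on $\ell(w)$ (or on $n$), using parabolic decompositions $w = u \cdot s_k$ for a right descent $s_k$ to reduce to smaller permutations, and handling the base case by a direct argument in configurations where every $3412$ pattern attains height at least $\mh(w)$.

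The main obstacle will be the lower bound in Case 2: establishing that $[e,w]$ is rank-symmetric through degree $\mh(w)-1$. This is a genuine structural statement about how the minimum height of a $3412$ pattern governs the first failure of local palindromicity, and is not a direct consequence of standard smoothness criteria. I expect the argument to require explicit pattern-based bijections controlled by the absence of $4231$ patterns and of $3412$ patterns of height below $\mh(w)$, together with a careful inductive framework ensuring that the removed parabolic factor does not disturb the palindromicity at the relevant ranks.
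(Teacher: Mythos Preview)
Your proposal is a plan rather than a proof, and in several places the plan diverges from what actually makes the argument go through.

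For Case~1 your counting-injection idea is reasonable and might be made to work, but the paper takes a different route: it shows (Lemma~\ref{lem:relation-corank1}) that $h(w)=1$ whenever the roots $\beta$ with $w\gtrdot wr_\beta$ are linearly dependent, and then produces such a dependence from a $4231$ occurrence by repeatedly ``splitting'' inversions (Lemma~\ref{lem:split-inversion-all}). Your injection ``sending $s_i$ to the cover pair crossing $i$ closest to the descent'' is not obviously well-defined, and you still need to argue the extra cover pair from the $4231$ misses the image; this is more delicate than you suggest.

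For the upper bound in Case~2, ``pivoting the four pattern entries'' is too vague to constitute a strategy. The paper's argument is an induction on $n$: if a minimum-content $3412$ survives in some proper $w_J$ or $\prescript{}{J}{w}$, use Theorem~\ref{thm:h-increases-in-parabolic}; otherwise the occurrence is forced to the corners and $w$ is exactly the explicit permutation of Lemma~\ref{lem:typeA-extremal-case}, whose $h$ is computed directly via a BP-decomposition.

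The real gap is the lower bound. You propose to build rank-reversing bijections on $[e,w]$ in degrees below $\mh(w)$, reducing via a single right descent $w=u s_k$. There is no reason such bijections exist or are compatible with that kind of reduction; the interval $[e,w]$ need not be rank-symmetric in any canonical way below $\mh(w)$. The paper avoids bijections entirely. It takes $J=\{2,\dots,n-1\}$, writes
\[
L(w)=L^J(w^J)\,L(w_J)+\sum_{u\in U}q^{\ell(u)},\qquad U=[e,w]\setminus\bigl([e,w^J]^J\cdot[e,w_J]\bigr),
\]
uses induction and Lemma~\ref{lem:min-of-h} to see that $L^J(w^J)L(w_J)$ is palindromic through degree $\mh(w)-1$, and then shows via the Gale-order tableau criterion that every $u\in U$ satisfies $|D|+2\le \ell(u)\le \ell(w)-|D|-1$ where $|D|+1\ge \mh(w)$. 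That length estimate on $U$ is the missing idea in your outline; without it (or a genuine substitute) the lower bound does not go through.
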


In the case $P_{e,w}(1)=2$, Theorem~\ref{thm:type-A-formula} follows from the work of Woo \cite{Woo}. Our theorem adds to the deep \cite{Woo-Yong-Gorenstein, Woo-Yong-governing} and ubiquitous \cite{Billey-Abe} links between singularities of Schubert varieties and pattern containment.

\begin{remark}
We thank Alexander Woo for pointing out the following implication of Theorem~\ref{thm:type-A-formula}: the quantity $h(w)$ may be calculated by looking only at the generic singularities of $X_w$. More specifically, by the formulae in \cite{Billey-Warrington}, $h(w)$ can be obtained from knowledge of $P_{v,w}(q)$ for all \emph{maximal} cells $BvB/B$ in the singular locus of $X_w$.
\end{remark}

\section{Preliminaries}
\label{sec:prelim}
\subsection{Bruhat order}
Let $W$ be a Weyl group with simple reflections $S=\{s_1,s_2,\ldots\}$ and length function $\ell$. Write $R$ for the set of reflections (conjugates of simple reflections), then \emph{Bruhat order $\leq$} on $W$ is defined as the transitive closure of the relation $y<yr$ if $r \in R$ and $\ell(y)<\ell(yr)$. 

\begin{theorem}[Deodhar \cite{Deodhar-bruhat-order}]
\label{thm:subword-bruhat}
Let $\bs{s}=s_{i_1}\cdots s_{i_{\ell}}$ be a reduced word for $w$, then $y \leq w$ if and only if some subsequence of $\bs{s}$ is a reduced word for $y$.
\end{theorem}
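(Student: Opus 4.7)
The plan is to prove both implications by induction on $\ell(w)$, using the \emph{lifting property} of Bruhat order as the main tool: for a simple reflection $s$ with $\ell(ws)<\ell(w)$, if $y\leq w$ then either $y\leq ws$, or $ys\leq ws$ with $\ell(ys)<\ell(y)$; and conversely, if $y'\leq w'$, $\ell(y's)>\ell(y')$, and $\ell(w's)>\ell(w')$, then $y's\leq w's$. I would treat this (equivalently, the strong exchange condition from which it is derived) as a standard result and cite it rather than reprove it.

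For the forward direction, the base case $\ell(w)=0$ is trivial. For $\ell(w)\geq 1$, set $s:=s_{i_\ell}$ and $w':=ws$, so that $s_{i_1}\cdots s_{i_{\ell-1}}$ is a reduced word for $w'$. Applying the lifting property to $y\leq w$ and $s$ gives two cases. If $y\leq w'$, induction furnishes a subword of $s_{i_1}\cdots s_{i_{\ell-1}}$ that is a reduced word for $y$, and this is automatically a subword of $\bs{s}$. Otherwise $ys\leq w'$ and $\ell(ys)=\ell(y)-1$; induction produces a subword $\bs{s}'$ of $s_{i_1}\cdots s_{i_{\ell-1}}$ that is a reduced word for $ys$, and appending $s_{i_\ell}$ to $\bs{s}'$ gives a word of length $\ell(y)$ that represents $y$, hence a reduced subword of $\bs{s}$ for $y$.

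For the reverse direction, let $s_{i_{j_1}}\cdots s_{i_{j_k}}$ be a subword of $\bs{s}$ that is a reduced word for $y$. If $j_k<\ell$, this subword already lies inside the reduced word $s_{i_1}\cdots s_{i_{\ell-1}}$ for $w'=ws_{i_\ell}$, so by induction $y\leq w'$; combined with $w'<w$ (coming directly from the definition of Bruhat order, since $\ell(w')<\ell(w)$) this yields $y\leq w$. If $j_k=\ell$, then $s_{i_{j_1}}\cdots s_{i_{j_{k-1}}}$ is a subword of $s_{i_1}\cdots s_{i_{\ell-1}}$ of length $k-1$ representing $ys_{i_\ell}$, and its length must equal $\ell(ys_{i_\ell})=k-1$ since the original word for $y$ was reduced. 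Induction gives $ys_{i_\ell}\leq w'$, and a single application of the lifting property in the ``upgrade'' form (with $\ell(y)=\ell(ys_{i_\ell})+1$ and $\ell(w)=\ell(w')+1$) produces $y\leq w$.

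The entire substantive content is packaged into the lifting property, whose proof requires a careful analysis of how a reduced expression interacts with a given reflection, typically via the strong exchange condition in the associated root system. In a self-contained treatment this would be the main obstacle; given that tool, the inductions above are routine bookkeeping, and the two cases (for whether the last letter $s_{i_\ell}$ of $\bs{s}$ participates in the subword) match up cleanly in both directions.
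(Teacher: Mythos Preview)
The paper does not prove this theorem; it is stated as a background result attributed to Deodhar and cited without proof. Your argument is correct and is essentially the standard textbook proof (cf.\ Bj\"orner--Brenti, \emph{Combinatorics of Coxeter Groups}, Theorem~2.2.2). Both inductions are handled cleanly; in particular, the ``upgrade'' form of lifting you use in the reverse direction---from $ys\leq ws$ with $s\notin D_R(ys)$ and $s\notin D_R(ws)$ to $y\leq w$---is indeed a consequence of the usual lifting property applied to $ys< w$ with $s\in D_R(w)\setminus D_R(ys)$.
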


\subsection{Kazhdan--Lusztig polynomials}
The left (respectively, right) \emph{descents} $D_L(w)$ (resp. $D_R(w)$) are those $s \in S$ such that $sw<w$ (resp. $ws<w$). 

\begin{defin}[Kazhdan and Lusztig \cite{Kazhdan-Lusztig-1}]
\label{def:kl-polys}
Define polynomials $R_{y,w}(q) \in \Z[q]$ by:
\[R_{y,w}(q) = \begin{cases} 0, &\text{ if $y \not \leq w$.} \\ 1, &\text{ if $y=w$.} \\ R_{ys,ws}(q), &\text{ if $s \in D_R(y) \cap D_R(w)$.} \\ qR_{ys,ws}(q) + (q-1)R_{y,ws}, &\text{ if $s \in D_R(w)\setminus D_R(y)$.}\end{cases}\]
Then there is a unique family of polynomials $P_{y,w}(q) \in \Z[q],$ the \emph{Kazhdan--Lusztig polynomials} satisfying $P_{y,w}(q)=0$ if $y \not \leq w$, $P_{w,w}(q)=1$, and such that if $y<w$ then $P_{y,w}$ has degree at most $\frac{1}{2}(\ell(w)-\ell(y)-1)$ and 
\[q^{\ell(w)-\ell(y)}P_{y,w}(q^{-1})=\sum_{a \in [y,w]} R_{y,a}(q)P_{a,w}(q).\]
\end{defin}

Although not immediate from the recursion in Definition~\ref{def:kl-polys}, which seems to privilege right multiplication by $s$ over left multiplication, the following proposition follows from the uniqueness of the canonical basis of the Hecke algebra $\mathcal{H}(W)$ (Theorem 1.1 of \cite{Kazhdan-Lusztig-1}).

\begin{prop}
\label{prop:kl-of-inverse}
Let $y,w \in W$, then $P_{y,w}(q)=P_{y^{-1},w^{-1}}(q)$. In particular, $h(w)=h(w^{-1})$.
\end{prop}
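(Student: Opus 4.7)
The plan is to deduce this from the characterization of the Kazhdan–Lusztig basis by lifting the inversion map $w \mapsto w^{-1}$ to an anti-automorphism of the Hecke algebra $\mathcal{H}(W)$ and invoking uniqueness.

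First I would set up what is needed from \cite{Kazhdan-Lusztig-1}: the Hecke algebra $\mathcal{H}(W)$ has a $\Z[q^{1/2},q^{-1/2}]$-basis $\{T_w\}_{w \in W}$ with relations $T_s T_w = T_{sw}$ when $\ell(sw) > \ell(w)$ and $T_s T_w = q T_{sw} + (q-1)T_w$ otherwise, the bar involution $\overline{\phantom{x}}$ sends $q^{1/2} \mapsto q^{-1/2}$ and $T_w \mapsto T_{w^{-1}}^{-1}$, and the canonical basis element $C'_w$ is the unique bar-invariant element of the form $q^{-\ell(w)/2}\sum_{y \leq w} P_{y,w}(q) T_y$ with $P_{w,w} = 1$ and $\deg P_{y,w} \leq \tfrac{1}{2}(\ell(w)-\ell(y)-1)$ for $y < w$. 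The polynomials $P_{y,w}$ appearing here are the same as those of Definition~\ref{def:kl-polys}.

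Next I would introduce the $\Z[q^{1/2},q^{-1/2}]$-linear map $\iota \colon \mathcal{H}(W) \to \mathcal{H}(W)$ defined by $\iota(T_w) = T_{w^{-1}}$. The key technical step is to verify that $\iota$ is an anti-automorphism of $\mathcal{H}(W)$; it suffices to check, using the defining braid and quadratic relations, that $\iota(T_s T_w) = \iota(T_w) \iota(T_s)$ for every simple reflection $s$, which is a direct computation from the two cases of the multiplication rule (using $\ell(sw) = \ell(w^{-1} s)$ and $(sw)^{-1} = w^{-1} s$). Equally important, $\iota$ commutes with the bar involution: this follows from $\overline{\iota(T_w)} = \overline{T_{w^{-1}}} = T_{w}^{-1}$, and one checks $T_w^{-1} = \iota(T_{w^{-1}}^{-1}) = \iota(\overline{T_w})$ by induction on length using $(T_s)^{-1} = q^{-1}T_s + (q^{-1}-1)$.

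Now I would apply $\iota$ to $C'_w$. Because $\iota$ is $\Z[q^{1/2},q^{-1/2}]$-linear, commutes with the bar involution, and sends $T_y$ to $T_{y^{-1}}$, the element $\iota(C'_w)$ is bar-invariant and equals
\[
q^{-\ell(w)/2}\sum_{y \leq w} P_{y,w}(q)\, T_{y^{-1}} = q^{-\ell(w^{-1})/2}\sum_{y' \leq w^{-1}} P_{y'^{-1},w^{-1}}(q)\, T_{y'},
\]
where I have reindexed by $y' = y^{-1}$ and used that Bruhat order and length are preserved by inversion. This element has leading term $T_{w^{-1}}$ and all remaining coefficients have degree bounded by $\tfrac{1}{2}(\ell(w^{-1})-\ell(y')-1)$. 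By the uniqueness characterization of $C'_{w^{-1}}$, we conclude $\iota(C'_w) = C'_{w^{-1}}$, and reading off the coefficient of $T_{y'}$ gives $P_{y^{-1},w^{-1}}(q) = P_{y,w}(q)$. The final sentence $h(w) = h(w^{-1})$ is then immediate from the definition of $h$ and $e^{-1}=e$, so that $P_{e,w}(q) = P_{e,w^{-1}}(q)$.

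The main obstacle is the routine but care-requiring verification that $\iota$ is actually an anti-automorphism and commutes with the bar involution; once this is in place, the argument is a standard uniqueness-of-canonical-basis deduction.
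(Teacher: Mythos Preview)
Your proof is correct and is exactly the approach indicated in the paper, which simply remarks that the proposition follows from the uniqueness of the canonical basis of $\mathcal{H}(W)$ (Theorem~1.1 of \cite{Kazhdan-Lusztig-1}). You have supplied the details---the anti-automorphism $\iota\colon T_w\mapsto T_{w^{-1}}$ commuting with the bar involution and the ensuing uniqueness argument---that the paper leaves implicit.
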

 
\subsection{Fiber bundles of Schubert varieties}

For $J \subset S$, we write $W_J$ for the subgroup generated by $J$, $P_J$ for the parabolic subgroup of $G$ generated by $B$ and $J$, and $W^J$ for the set of minimal length representatives of the left cosets $W/W_J$. We have $W^J=\{w \in W \mid D_R(w) \cap J = \emptyset\}$. Each $w \in W$ decomposes uniquely as $w^Jw_J$ with $w^J \in W^J$ and $w_J \in W_J$.  Using right cosets instead gives decompositions $w=\prescript{}{J}{w} \prescript{J}{}{w}$ with $\prescript{}{J}{w} \in W_J$ and $\prescript{J}{}{w} \in \prescript{J}{}{W} = (W^J)^{-1}$. Notice that $(w^{-1})_J = (\prescript{}{J}{w})^{-1}.$

We write $w_0(J)$ for the unique element of $W_J$ of maximum length and write $[u,v]^J$ for the set $[u,v] \cap W^J$. Since parabolic decompositions are unique, we have an injection $[e,w^J]^J \times [e,w_J] \hookrightarrow [e,w]$ given by multiplication.

Schubert varieties $X_{w^J}^J \coloneqq \overline{Bw^JP_J/P_J}$ in the partial flag variety $G/P_J$ have an affine paving 
\[\bigsqcup_{\substack{y \in W^J \\ y \leq w^J}} ByP_J/P_J,\]
and so
\[L^J(w^J) \coloneqq \sum_{\substack{y \in W^J \\ y \leq w^J}} q^{\ell(y)} = \sum_{j \geq 0} \dim(H^j(X^J_{w^J}))q^{j/2}.
\]

\begin{defin}[Richmond and Slofstra \cite{Richmond-Slofstra-fiber-bundle}]
\label{def:bp-decompositions}
The parabolic decomposition $w=w^J w_J$ is called a \emph{Billey--Postnikov decomposition} or \emph{BP-decomposition} of $w$ if $\supp(w^J) \cap J \subset D_L(w_J)$.
\end{defin}

\begin{theorem}[Richmond and Slofstra \cite{Richmond-Slofstra-fiber-bundle}]
\label{thm:bp-decomposition-fiber-bundle}
The map $X_w \twoheadrightarrow X^J_{w^J}$ induced by the map $G/B \to G/P_J$ is a bundle projection if and only if $J$ is a BP-decomposition of $w$, and in this case the fiber is isomorphic to $X_{w_J}$. Taking Poincar\'{e} polynomials, we have $L^J(w^J)L(w_J)=L(w)$ in this case.
\end{theorem}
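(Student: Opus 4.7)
The theorem has two parts: a geometric bundle characterization and the resulting factorization of Poincar\'{e} polynomials. My plan is to prove the combinatorial identity $L^J(w^J) L(w_J) = L(w)$ directly first (this is what the paper actually uses downstream), and then address the geometric claim by constructing trivializations via Bott--Samelson varieties.

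For the Poincar\'{e} polynomial identity, I would show that the BP condition is equivalent to the multiplication map $\mu \colon [e, w^J]^J \times [e, w_J] \to [e, w]$ being a bijection. Injectivity together with length-additivity $\ell(uv) = \ell(u) + \ell(v)$ for $u \in W^J$ and $v \in W_J$ are standard properties of parabolic decompositions, giving $L(w) \geq L^J(w^J) L(w_J)$ in general. For surjectivity, given $y \leq w$, the Deodhar lifting property already yields $y^J \leq w^J$; the substantive step is $y_J \leq w_J$. Using a reduced word $\bs{s} = \bs{s}^J \cdot \bs{s}_J$ for $w$ and Theorem~\ref{thm:subword-bruhat}, a subword expression $\bs{t} = \bs{t}_1 \bs{t}_2$ for $y$ produces elements $u \leq w^J$ (from $\bs{t}_1$) and $v \leq w_J$ (from $\bs{t}_2$) with $y = uv$ length-additively. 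Factoring $u = u^J u_J$ yields $y_J = u_J v$ length-additively; the BP condition $\supp(w^J) \cap J \subset D_L(w_J)$ is then precisely what permits absorbing the $W_J$-letters contributed by the $\bs{s}^J$ part into a reduced word for $w_J$, giving $u_J v \leq w_J$. The Poincar\'{e} identity is then immediate from the bijection and length-additivity of $\mu$.

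For the bundle characterization, the only-if direction follows by restricting $\pi \colon X_w \to X^J_{w^J}$ to a neighborhood of the generic point: the generic fiber is isomorphic to $X_{w_J}$, and uniformity of fibers across $X^J_{w^J}$ combined with a Bruhat-order comparison (via the identity just proven) forces the BP condition. The if direction is the main obstacle: my plan is to construct trivializations using Bott--Samelson varieties. Choose reduced words $\bs{s}^J$ and $\bs{s}_J$ for $w^J$ and $w_J$; the Bott--Samelson variety for the concatenation factors as $\tilde{X}(\bs{s}^J) \times_B \tilde{X}(\bs{s}_J)$, which is a Zariski-locally trivial $\tilde{X}(\bs{s}_J)$-bundle over $\tilde{X}(\bs{s}^J)$. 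The BP condition is exactly what is needed for this bundle structure to descend through the Bott--Samelson resolutions $\tilde{X}(\bs{s}^J) \to X^J_{w^J}$ and $\tilde{X}(\bs{s}_J) \to X_{w_J}$; verifying this descent — in particular, ruling out exceptional fibers of the resolution that would produce non-isomorphic fibers in $X_w \to X^J_{w^J}$ — is the main technical obstacle, requiring a careful analysis of how $P_J$-orbits stratify $X_w$.
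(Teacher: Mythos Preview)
The paper does not prove this theorem; it is stated with attribution to Richmond and Slofstra \cite{Richmond-Slofstra-fiber-bundle} and used as a black box. So there is no ``paper's own proof'' to compare your proposal against, and your task of reconstructing a proof goes beyond what the paper does.

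That said, a remark on your plan. The combinatorial half (bijectivity of $\mu$ under the BP condition) is essentially the content of Billey--Postnikov's original observation, and your outline is close to correct. The step where you write ``the BP condition \ldots\ is then precisely what permits absorbing the $W_J$-letters \ldots\ giving $u_J v \leq w_J$'' is the only place that needs care: from $\supp(u_J) \subset \supp(w^J) \cap J \subset D_L(w_J)$ you do not immediately get $u_J v \leq w_J$ for arbitrary $v \leq w_J$. One clean way to close this is to show first that the BP condition is equivalent to $w_J$ being the unique maximal element of $[e,w] \cap W_J$; then $y_J \in [e,w] \cap W_J$ gives $y_J \leq w_J$ directly, with no need to track $u_J$ and $v$ separately.

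For the geometric half, your Bott--Samelson descent strategy is in the spirit of Richmond--Slofstra's actual proof, but as you acknowledge, the descent step (controlling fibers of the resolution over singular strata) is where all the work lies, and your proposal does not yet contain an argument for it. If you only need the Poincar\'{e}-polynomial factorization for the applications in this paper, the combinatorial bijection suffices and you can cite Richmond--Slofstra for the bundle statement, just as the paper does.
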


\subsection{Patterns in Weyl groups}
\label{sec:prelim-patterns}
Let $\Phi \subset \Lie_{\R}(T)^{\ast}$ denote the root system for $G$, with positive roots $\Phi^+$ and simple roots $\Delta$. For $w \in W$, the \emph{inversion set} is $\inv(w)\coloneqq \{\alpha \in \Phi^+ \mid w\alpha \in \Phi^-\}$.

A subgroup $W'$ of $W$ generated by reflections is called a \emph{reflection subgroup}, and is itself a Coxeter group with reflections $R'=R \cap W'$. We write $\leq'$ for the intrinsic Bruhat order on $W'$, $\Phi'$ for the root system, and $\inv'$ for inversion sets. 

\begin{prop}[Billey and Braden \cite{Billey-Braden}; Billey and Postnikov \cite{Billey-Postnikov}]
\label{prop:flattening-map}
Let $W' \subset W$ be a reflection subgroup, there is a unique function $\fl: W \to W'$, the \emph{flattening map} satisfying:
\begin{enumerate}
    \item[\normalfont{(1)}] $\fl$ is $W'$-equivariant, and
    \item[\normalfont{(2)}] if $\fl(x) \leq' \fl(wx)$ for some $w \in W'$, then $x \leq wx$.
\end{enumerate}
Furthermore, $\fl$ has the following explicit description: $\fl(w)$ is the unique element $w' \in W'$ with $\inv'(w')=\inv(w) \cap \Phi'$. If $W'=W_J$ is a parabolic subgroup, then $\fl(w)=w_J$.
\end{prop}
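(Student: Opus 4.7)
The plan is to prove existence of $\fl$ by writing down the explicit formula as a candidate, verify the two stated properties, and then address uniqueness. Equip $W'$ with the positive system $\Phi'^+ \coloneqq \Phi' \cap \Phi^+$, so that for any $v \in W'$ the intrinsic inversion set satisfies $\inv'(v) = \inv(v) \cap \Phi'^+$. Define $\fl(w)$ to be the unique $w' \in W'$ with $\inv'(w') = \inv(w) \cap \Phi'^+$. To see such a $w'$ exists, I would verify that $\inv(w) \cap \Phi'^+$ is \emph{biclosed} in $\Phi'^+$, which is immediate from the biclosedness of $\inv(w)$ in $\Phi^+$; Dyer's classical theorem identifying biclosed subsets of the positive roots of a Coxeter group with its inversion sets then supplies the required element, and uniqueness of $w'$ is automatic since $\inv'$ is injective on $W'$. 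In the parabolic case $W' = W_J$, the identification $\fl(w) = w_J$ follows from the standard parabolic decomposition.

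Next I would verify $W'$-equivariance. For $u \in W'$ and $w \in W$, one has $\alpha \in \inv(uw)$ iff $w^{-1}u^{-1}\alpha \in \Phi^-$. Intersecting with $\Phi'^+$ and using that $u$ permutes $\Phi'$, together with the analogous identity computed inside $W'$ for the product $u \cdot \fl(w)$, yields $\inv(uw) \cap \Phi'^+ = \inv'(u \cdot \fl(w))$. Uniqueness of the element of $W'$ with a prescribed inversion set then forces $\fl(uw) = u \cdot \fl(w)$, establishing property~(1).

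The main obstacle is property~(2). Given $w \in W'$ with $\fl(x) \leq' w \cdot \fl(x)$, choose a chain $\fl(x) = v_0 <' v_1 <' \cdots <' v_k = w \cdot \fl(x)$ in $W'$-Bruhat order in which each step is a reflection multiplication $v_i = r_i v_{i-1}$ with $r_i \in R'$ having positive root $\beta_i \in \Phi'^+$. Set $x_0 \coloneqq x$ and $x_i \coloneqq r_i x_{i-1}$; by equivariance, $\fl(x_i) = v_i$ inductively, so it remains to show $x_{i-1} < x_i$ in $W$-Bruhat. The condition $v_{i-1} <' r_i v_{i-1}$ is equivalent to $v_{i-1}^{-1}\beta_i \in \Phi'^+$, i.e.\ $\beta_i \notin \inv'(v_{i-1})$; via the explicit formula $\inv'(v_{i-1}) = \inv(x_{i-1}) \cap \Phi'^+$ together with $\beta_i \in \Phi'^+$, this is equivalent to $\beta_i \notin \inv(x_{i-1})$, which by the root characterization of Bruhat order gives $x_{i-1} < r_i x_{i-1}$ in $W$. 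Concatenating the chain yields $x \leq wx$ as desired.

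For uniqueness, applying property~(2) with both $w$ and $w^{-1}$ (and using equivariance) yields the biconditional $x \leq wx \iff \fl(x) \leq' w \cdot \fl(x)$ for all $w \in W'$. Hence the set $\{w \in W' \mid x \leq wx\}$ determines the $W'$-Bruhat up-set of $\fl(x)$, which in turn pins down $\fl(x)$ uniquely, so any two maps satisfying (1) and (2) must agree on all of $W$.
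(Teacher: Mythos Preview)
The paper does not prove this proposition; it is attributed to \cite{Billey-Braden,Billey-Postnikov} with no argument given, so there is nothing in the paper to compare your proof against.

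That said, your argument contains a left/right error that breaks it. With the paper's convention $\inv(v)=\{\alpha\in\Phi^+:v\alpha\in\Phi^-\}$, one has $\alpha\in\inv(uw)$ iff $(uw)\alpha\in\Phi^-$, not $w^{-1}u^{-1}\alpha\in\Phi^-$ as you wrote. Under this convention the explicit map is \emph{right} $W'$-equivariant, $\fl(wu)=\fl(w)\,u$ for $u\in W'$, but not left-equivariant: in the parabolic case $\fl(w)=w_J$, and already for $W=\S_3$, $J=\{1\}$, $u=s_1$, $w=s_2$ one finds $(s_1s_2)_J=e\neq s_1=u\cdot(s_2)_J$. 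Your chain argument for property~(2) inherits the same slip: the criterion for $x_{i-1}<r_{\beta_i}x_{i-1}$ under \emph{left} multiplication is $x_{i-1}^{-1}\beta_i\in\Phi^+$, i.e.\ $\beta_i\notin\inv(x_{i-1}^{-1})$, whereas the defining identity $\inv'(\fl(x_{i-1}))=\inv(x_{i-1})\cap\Phi'^+$ only controls $\inv(x_{i-1})$. Running the chain with right multiplications fixes this inversion-set step but then forfeits the equivariance step $\fl(x_i)=v_i$, so neither orientation works as written.

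In fact, property~(2) exactly as printed already fails in the same example: with $x=s_1s_2$ and $w=s_1$ one has $\fl(x)=(s_1s_2)_J=e=(s_2)_J=\fl(wx)$, yet $x=s_1s_2\not\le s_2=wx$. The version that is both true and provable by your chain strategy is the right-sided one, namely $\fl(x)\le'\fl(xw)\Rightarrow x\le xw$ for $w\in W'$: use right equivariance to write $\fl(xw)=\fl(x)\,w$, refine to a saturated chain $v_i=v_{i-1}r_i$, set $x_i=x_{i-1}r_i$, and then $\beta_i\notin\inv'(v_{i-1})=\inv(x_{i-1})\cap\Phi'^+$ gives $\beta_i\notin\inv(x_{i-1})$ and hence $x_{i-1}<x_{i-1}r_i$. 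You should consult the cited sources for the precise intended formulation before finalizing a proof.
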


\begin{defin}
\label{def:pattern-avoidance}
We say that $w \in W$ \emph{contains the pattern} $w'' \in W''$, if $W$ has some reflection subgroup $W'$, with an isomorphism $W'\xrightarrow{\ph} W''$ as Coxeter systems, such that $\ph(\fl(w))=w''$. Otherwise, $w$ is said to \emph{avoid} $w''$.
\end{defin}

\begin{theorem}[Special case of Theorem 5 of Billey and Braden \cite{Billey-Braden}]
\label{thm:h-increases-in-parabolic}
Let $J \subset S$, then $h(w) \leq h(w_J)$.
\end{theorem}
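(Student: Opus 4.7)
Write $k \coloneqq h(w_J)$; the statement is vacuous unless $k$ is finite, so assume this. The plan is to establish the coefficient-wise inequality of Kazhdan--Lusztig polynomials
\[
P_{e, w}(q) \geq P_{e, w_J}(q),
\]
both computed in $W$. Granted this, together with the classical identification $P^W_{e, w_J} = P^{W_J}_{e, w_J}$ (KL polynomials of pairs of elements in a parabolic subgroup agree with those in the ambient group, via the canonical-basis-preserving inclusion $\mc{H}(W_J) \hookrightarrow \mc{H}(W)$), positivity yields
\[
[q^k] P^W_{e, w}(q) \geq [q^k] P^W_{e, w_J}(q) = [q^k] P^{W_J}_{e, w_J}(q) > 0,
\]
and the definition of $h(w)$ gives $h(w) \leq k = h(w_J)$.

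The substantive content is the coefficient-wise inequality $P_{e, w} \geq P_{e, w_J}$. Proposition~\ref{prop:flattening-map} exhibits $w_J$ as the flattening $\fl(w)$ for the parabolic reflection subgroup $W_J \subseteq W$, so that $w$ contains $w_J$ as a pattern in the sense of Definition~\ref{def:pattern-avoidance}. The inequality is then the parabolic specialization of Billey--Braden's pattern-containment bound for KL polynomials (Theorem 5 of \cite{Billey-Braden}). Geometrically, it amounts to constructing a surjection of intersection-cohomology stalks
\[
IH^*(X_w)_{eB} \twoheadrightarrow IH^*(X_{w_J})_{eB},
\]
arising from the Schubert-subvariety inclusion $X_{w_J} \hookrightarrow X_w$ together with the existence of a $T$-equivariant transverse slice to $X_w$ at $eB$ that contains (an open neighborhood of $eB$ in) $X_{w_J}$ as a closed subvariety.

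The main obstacle is producing this IH surjection in full generality, because the projection $X_w \to X^J_{w^J}$ need not be a fiber bundle when $w = w^J w_J$ is not a Billey--Postnikov decomposition (Theorem~\ref{thm:bp-decomposition-fiber-bundle}), and in that case the desired local-product picture is only available \'etale-locally. Billey and Braden handle this by passing to the moment-graph model of intersection cohomology: the Braden--MacPherson canonical sheaf on $X_w$ restricts compatibly to $X_{w_J}$, and the surjection of stalks reflects the inclusion of the moment graph of $X_{w_J}$ into the local moment graph of $X_w$ at $eB$. A parallel combinatorial route would extract the same inequality from positivity of the structure constants of the Kazhdan--Lusztig basis of $\mc{H}(W)$ (Elias--Williamson), by carefully expanding $C'_{w^J} \cdot C'_{w_J}$ in the canonical basis and reading off coefficients of $T_e$.
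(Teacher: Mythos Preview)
The paper does not give its own proof of this statement; it is recorded as a special case of Theorem~5 of Billey--Braden \cite{Billey-Braden} and simply cited. Your proposal is correct and takes the same route---you reduce to the coefficient-wise inequality $P_{e,w} \geq P_{e,w_J}$, identify it (via $P^W_{e,w_J}=P^{W_J}_{e,w_J}$ and Proposition~\ref{prop:flattening-map}) as the parabolic instance of the Billey--Braden pattern inequality, and defer to their argument for the substantive step---so there is nothing to contrast.
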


By Theorem~\ref{thm:h-increases-in-parabolic} and Proposition~\ref{prop:kl-of-inverse}, we also have
\begin{equation}
\label{eq:h-increases-left-parabolic}
h(w) \leq h(\prescript{}{J}{w}).
\end{equation}

Billey and Postnikov proved the following characterization of smooth Schubert varieties, generalizing the work of Lakshmibai--Sandhya \cite{Lakshmibai-Sandhya} in the case $G=\SL_{n}$. We write $W(Z)$ to denote the Weyl group of Type $Z$, where $Z$ is one of the types in the Cartan--Killing classification.

\begin{theorem}[Billey and Postnikov \cite{Billey-Postnikov}]
\label{thm:bp-smoothness}
Let $G$ be simply-laced, then the Schubert variety $X_w \subset G/B$ is smooth if and only if $w$ avoids the following patterns (see Figure~\ref{fig:dynkin-diagrams} for indexing conventions):
\begin{enumerate}
    \item[\normalfont{(1)}] $s_2s_1s_3s_2 \in W(A_3)$,
    \item[\normalfont{(2)}] $s_1s_2s_3s_2s_1 \in W(A_3)$, and
    \item[\normalfont{(3)}] $s_2s_0s_1s_3s_2 \in W(D_4)$.
\end{enumerate}
\end{theorem}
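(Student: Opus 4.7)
The plan is to prove the two directions of the equivalence separately. For the direction \emph{$w$ contains one of the three patterns $\Rightarrow$ $X_w$ is singular}, the key input is Billey and Braden's pattern-preservation theorem (Theorem 5 of \cite{Billey-Braden}): if $W' \subset W$ is a reflection subgroup and the flattened element $\fl(w)$ indexes a singular Schubert variety inside the flag variety for $W'$, then $X_w$ is singular. Hence it suffices to verify singularity for each of the three listed patterns individually. These are finite computations in the rank-$3$ and rank-$4$ flag varieties: for each pattern one computes $P_{e,w}(q)$ via the recursion of Definition~\ref{def:kl-polys} and exhibits a nonzero coefficient on a positive power of $q$, which by Theorem~\ref{thm:smooth-iff-kl-poly-trivial} witnesses singularity.

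For the converse direction \emph{$w$ avoids all three patterns $\Rightarrow$ $X_w$ is smooth}, I would proceed by induction on the rank $r=|S|$. The base cases $r\leq 2$ are immediate since every Schubert variety of rank at most two is smooth. For the inductive step, first reduce to $\supp(w)=S$ by applying induction inside $W_{\supp(w)}$ when $\supp(w)\subsetneq S$. The workhorse is Theorem~\ref{thm:bp-decomposition-fiber-bundle}: whenever one can choose a proper $J\subsetneq S$ so that $w=w^J w_J$ (or symmetrically on the left) is a BP-decomposition, $X_w$ is a Zariski-locally trivial fiber bundle over $X^J_{w^J}$ with fiber $X_{w_J}$, and such a bundle is smooth iff its base and fiber are. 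Both $w^J$ and $w_J$ live in Coxeter systems of strictly smaller rank, and both inherit the pattern-avoidance hypothesis (any forbidden pattern occurring in $w^J$ or $w_J$ would, via inclusions of reflection subgroups, lift to a pattern of $w$ by Proposition~\ref{prop:flattening-map}). By induction the base and fiber are then smooth, so $X_w$ is smooth.

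The main obstacle is the case in which $w$ admits no nontrivial BP-decomposition on either side and yet $\supp(w)=S$. Such elements are severely constrained — this is the content of Richmond-Slofstra's analysis in \cite{Richmond-Slofstra-fiber-bundle} — and in simply-laced types they must interact in controlled ways with any branch vertex of the Dynkin diagram. The strategy is to show by a case analysis over simply-laced Dynkin types that any such non-BP-decomposable $w$ with $\supp(w)=S$ and with $X_w$ singular must contain one of the three listed patterns. The $D_4$ pattern $s_2s_0s_1s_3s_2$ is precisely the obstruction needed to handle branching: without it a branch vertex in type $D_n$ or $E_n$ could conspire with surrounding simple reflections to produce a non-decomposable singular element. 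The two $A_3$ patterns are the two shortest singular elements of type $A$ (i.e.\ $3412$ and $4231$), and must appear whenever smoothness fails in a rank-$3$ type-$A$ subdiagram, recovering Lakshmibai-Sandhya \cite{Lakshmibai-Sandhya} as the type-$A$ specialization. The bulk of the combinatorial work lies in this final case analysis of non-decomposable elements at a branch vertex; this is where one expects the proof's hardest bookkeeping.
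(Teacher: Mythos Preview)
This theorem is not proved in the paper; it is quoted as a known result of Billey and Postnikov \cite{Billey-Postnikov} and used as a black box throughout. There is therefore no proof in the paper against which to compare your proposal.

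That said, your outline for the converse direction has a genuine gap. After a BP-decomposition $w = w^J w_J$ you assert that ``both $w^J$ and $w_J$ live in Coxeter systems of strictly smaller rank'' and then invoke induction on rank. This is true for $w_J \in W_J$, but it is false for $w^J$: the element $w^J$ still lies in the full group $W$, and the base $X^J_{w^J}$ of the bundle in Theorem~\ref{thm:bp-decomposition-fiber-bundle} is a Schubert variety in the \emph{partial} flag variety $G/P_J$, not in the full flag variety of a lower-rank group. So the inductive hypothesis, as you have stated it, does not apply to the base. Richmond and Slofstra do show that in finite type one may always find a \emph{Grassmannian} BP-decomposition ($|S\setminus J|=1$) and iterate, but turning that into your pattern-avoidance statement then requires a separate smoothness criterion for Grassmannian Schubert varieties in each simply-laced type and a check that this criterion is implied by avoidance of the three listed patterns; you have not supplied this. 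The original proof in \cite{Billey-Postnikov} does not go through fiber bundles at all: it proceeds via a root-theoretic criterion comparing the inversion set of $w$ with the tangent space at the identity, and the case analysis is over root subsystems rather than over BP-decompositions.
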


\subsection{Conventions for simply-laced groups}
\subsubsection{$G=\SL_{n}$ (Type $A_{n-1}$)}
We let $B$ be the set of lower triangular matrices in $G$, and $T \subset B$ the diagonal matrices in $G$. We thus have
\begin{align*}
    \Phi(A_{n-1}) &= \{e_j-e_i \mid 1 \leq i \neq j \leq n\} \\
    \Phi^+(A_{n-1}) &= \{e_j-e_i \mid 1 \leq i<j \leq n\} \\
    \Delta(A_{n-1}) &= \{e_{i+1}-e_i \mid 1 \leq i \leq n-1\}.
\end{align*}
Under these conventions, the Weyl group $W(A_{n-1})$ acts on $\Lie_{\R}(T)^* = \R^n / (1,\ldots,1)$ by permutation of the coordinates, yielding an isomorphism $W(A_{n-1}) \cong \S_n$. Letting $\alpha_i \coloneqq e_{i+1}-e_i$, the corresponding simple reflection $s_i$ is identified with the transposition $(i \: i+1) \in \S_n$. It will often be convenient for us to write permutations $w$ in one-line notation as $w(1)\ldots w(n)$. The Dynkin diagram is shown in Figure~\ref{fig:dynkin-diagrams}.

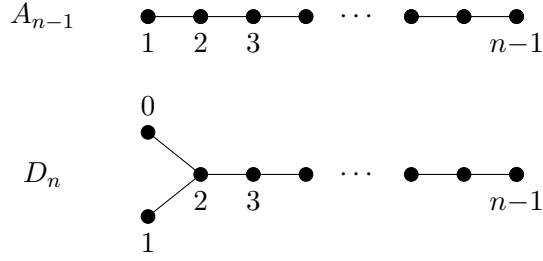
\begin{figure}
\centering
\begin{tikzpicture}[scale=0.7]
\draw(3,0)--(0,0);
\draw(7,0)--(5,0);
\node[draw,shape=circle,fill=black,scale=0.5] at (0,0) {};
\node[draw,shape=circle,fill=black,scale=0.5] at (1,0) {};
\node[draw,shape=circle,fill=black,scale=0.5] at (2,0) {};
\node[draw,shape=circle,fill=black,scale=0.5] at (3,0) {};
\node[draw,shape=circle,fill=black,scale=0.5] at (5,0) {};
\node[draw,shape=circle,fill=black,scale=0.5] at (6,0) {};
\node[draw,shape=circle,fill=black,scale=0.5] at (7,0) {};
\node[draw,shape=circle,fill=black,scale=0.5][label=below: {$1$}] at (0,0) {};
\node[draw,shape=circle,fill=black,scale=0.5][label=below: {$2$}] at (1,0) {};
\node[draw,shape=circle,fill=black,scale=0.5][label=below: {$3$}] at (2,0) {};
\node[draw,shape=circle,fill=black,scale=0.5][label=below: {}] at (3,0) {};
\node[draw,shape=circle,fill=black,scale=0.5][label=below: {}] at (5,0) {};
\node[draw,shape=circle,fill=black,scale=0.5][label=below: {}] at (6,0) {};
\node[draw,shape=circle,fill=black,scale=0.5][label=below: {$n{-}1$}] at (7,0) {};
\node at (4,0) {$\cdots$};
\node at (-2,0) {$A_{n-1}$};

\draw(0,-3.8)--(1,-3)--(0,-2.2);
\draw(1,-3)--(3,-3);
\draw(5,-3)--(7,-3);
\node at (4,-3) {$\cdots$};
\node[draw,shape=circle,fill=black,scale=0.5][label=above: {$0$}] at (0,-2.2) {};
\node[draw,shape=circle,fill=black,scale=0.5][label=below: {$1$}] at (0,-3.8) {};
\node[draw,shape=circle,fill=black,scale=0.5][label=below: {$2$}] at (1,-3) {};
\node[draw,shape=circle,fill=black,scale=0.5][label=below: {$3$}] at (2,-3) {};
\node[draw,shape=circle,fill=black,scale=0.5][label=below: {}] at (3,-3) {};
\node[draw,shape=circle,fill=black,scale=0.5][label=below: {}] at (5,-3) {};
\node[draw,shape=circle,fill=black,scale=0.5][label=below: {}] at (6,-3) {};
\node[draw,shape=circle,fill=black,scale=0.5] at (7,-3) {};
\node[draw,shape=circle,fill=black,scale=0.5][label=below: {$n{-}1$}] at (7,-3) {};
\node at (-2,-3) {$D_n$};
\end{tikzpicture}
    \caption{The Dynkin diagrams for Types $A_{n-1}$ and $D_n$, with the labelling of the nodes that we use throughout the paper.}
    \label{fig:dynkin-diagrams}
\end{figure}

\subsubsection{$G=\SO_{2n}$ (Type $D_n$)}
We let $B$ be the set of lower triangular matrices in $G$, and $T \subset B$ the diagonal matrices in $G$. We thus have
\begin{align*}
    \Phi(D_n) &= \{e_j \pm e_i \mid 1 \leq i \neq j \leq n\} \\
    \Phi^+(D_n) &= \{e_j \pm e_i \mid 1 \leq i<j \leq n\} \\
    \Delta(D_n) &= \{e_2+e_1\} \cup \{e_{i+1}-e_i \mid 1 \leq i \leq n-1\}.
\end{align*}
Under these conventions, the Weyl group $W(D_n)$ acts on $\Lie_{\R}(T)^* = \R^n$ by permuting coordinates and negating pairs of coordinates. This identifies $W(D_n)$ with the subgroup of the symmetric group on $\{-n, \ldots, -1, 1, \ldots, n\}$ satisfying $w(i)=-w(-i)$ for all $i$, and such that \[
|\{w(1),\ldots,w(n)\}\cap \{-n,\ldots,-1\}|\]
is even. We write $\D_n$ for this realization of $W(D_n)$. Such a permutation can be uniquely specified by its \emph{window notation} $[w(1)\ldots w(n)]$.

Write $\delta_0=e_2+e_1$ and $\delta_i=e_{i+1}-e_i$, $i=1,2,\ldots,n-1$ for the simple roots. It will often be convenient for us to write $\bar{i}$ for $-i$, and we use these interchangeably. We also make the convention that $e_{\bar i}=e_{-i}\coloneqq -e_i$ for $i>0$. We have simple reflections $s_0=(1 \: \bar{2}) (\bar{1} \: 2)$ and $s_i=(i \: i+1)(\bar{i} \: \overline{i{+}1})$ for $i=1,\ldots,n-1$. The Dynkin diagram is shown in Figure~\ref{fig:dynkin-diagrams}.

\subsection{Reflection subgroups and diagram automorphisms}

See Figure~\ref{fig:dynkin-diagrams} for our labeling of the Dynkin diagrams. The following is clear:

\begin{prop}
\label{prop:diagram-automorphisms}
The diagram of the Type $A_{n-1}$ has an automorphism $\e_A$ sending $\alpha_i \mapsto \alpha_{n-i}$ for $i=1,\ldots,n-1$, and the diagram of Type $D_n$ has an automorphism $\e_D$ interchanging $\delta_0\leftrightarrow \delta_1$. On the Weyl groups, this induces:
\[w(1)\ldots w(n) \xmapsto{\e_A} (n+1-w(n)) \ldots (n+1-w(1)),\]
\[w \xmapsto{\e_D} (1 \: \bar{1}) \cdot w \cdot (1 \: \bar{1}). \]
\end{prop}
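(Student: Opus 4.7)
The plan is to verify directly that the stated formulas implement the Weyl group automorphisms induced by the diagram automorphisms. Since $W$ is generated by simple reflections, it suffices to show that each proposed map sends $s_i$ to the simple reflection indexed by the image of $\alpha_i$ under the diagram automorphism.

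For Type $A_{n-1}$, I would recognize the map $w(1)\cdots w(n) \mapsto (n{+}1{-}w(n))\cdots(n{+}1{-}w(1))$ as conjugation by the longest element $w_0 = n(n{-}1)\cdots 1$: since $w_0(i)=n+1-i$, we have $(w_0 w w_0)(i) = n+1-w(n+1-i)$, matching the formula in one-line notation. Because $w_0$ acts on roots by $w_0(\alpha_i) = -\alpha_{n-i}$, conjugation by $w_0$ sends $s_i = s_{\alpha_i}$ to $s_{-\alpha_{n-i}} = s_{\alpha_{n-i}} = s_{n-i}$, which is exactly the Weyl group automorphism induced by $\e_A$.

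For Type $D_n$, set $\tau \coloneqq (1\:\bar 1)$. This element is not itself in $\D_n$ (it has an odd number of sign changes), but conjugation by $\tau$ is nonetheless a well-defined automorphism of $\D_n$: the sign-symmetry condition $w(\bar i) = -w(i)$ is preserved because $\tau$ itself satisfies it, and the parity condition is preserved by the following short argument. The sign of $(\tau w \tau)(i)$ differs from that of $w(i)$ precisely when $i=1$ and $|w(1)| \neq 1$, or when $i \neq 1$ and $|w(i)| = 1$; since exactly one index $i_0 \in \{1,\dots,n\}$ has $|w(i_0)| = 1$, the total number of sign flips is either $0$ (when $i_0 = 1$) or $2$ (otherwise). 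A direct computation on the simple reflections then yields $\tau s_0 \tau = s_1$, $\tau s_1 \tau = s_0$, and $\tau s_i \tau = s_i$ for $i \geq 2$ (the latter because such $s_i$ fix $\pm 1$), realizing the swap $\delta_0 \leftrightarrow \delta_1$.

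There is no substantial obstacle here; the authors in fact label the proposition as clear. The only mildly technical point is verifying that conjugation by the non-element $\tau$ preserves $\D_n$, and this is handled by the brief parity count above.
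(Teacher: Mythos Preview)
Your proof is correct. The paper gives no proof at all (it simply says ``The following is clear''), so your direct verification---recognizing $\e_A$ as conjugation by $w_0$ and $\e_D$ as conjugation by $(1\ \bar 1)$, then checking the effect on simple reflections---is exactly the kind of routine check the authors are implicitly leaving to the reader.
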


It is clear from Definition~\ref{def:kl-polys} that we have:

\begin{prop}
\label{prop:auto-preserves-h}
If $w \in W(A_{n-1})$ then $h(w)=h(\e_A(w))$, and if $w \in W(D_n)$ then $h(w)=h(\e_D(w))$.
\end{prop}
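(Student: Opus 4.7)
The plan is to show that both $\e_A$ and $\e_D$ are Coxeter system automorphisms of $W$ (that is, group automorphisms sending $S$ bijectively to $S$), and then to observe that Kazhdan--Lusztig polynomials are invariant under any such automorphism by a straightforward induction on the defining recursion.

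First I would verify that the two maps really do permute simple reflections according to the claimed diagram automorphisms. For $\e_A$, the formula $w(1)\ldots w(n)\mapsto (n{+}1{-}w(n))\ldots(n{+}1{-}w(1))$ is just conjugation by the longest element $w_0\in\S_n$, which is clearly a group automorphism, and a one-line computation shows it sends $s_i=(i\:i{+}1)$ to $s_{n-i}$. For $\e_D$, the element $(1\:\bar 1)$ lies in the hyperoctahedral group and normalizes $W(D_n)$, so conjugation by it is an automorphism of $\D_n$; a direct check shows that it exchanges $s_0=(1\:\bar 2)(\bar 1\:2)$ with $s_1=(1\:2)(\bar 1\:\bar 2)$ and fixes $s_i$ for $i\geq 2$, matching $\e_D$ on the diagram.

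Next, let $\sigma$ denote any such Coxeter system automorphism. Because $\sigma$ preserves the length function and the generating set, we have $\sigma(e)=e$, $\ell(\sigma(w))=\ell(w)$, $y\leq w\iff \sigma(y)\leq\sigma(w)$, and $D_R(\sigma(w))=\sigma(D_R(w))$. Reading off each clause of Definition~\ref{def:kl-polys}, an easy induction on $\ell(w)$ yields $R_{\sigma(y),\sigma(w)}(q)=R_{y,w}(q)$. Substituting $\sigma(y),\sigma(w)$ into the defining identity
\[q^{\ell(w)-\ell(y)}P_{\sigma(y),\sigma(w)}(q^{-1})=\sum_{a\in[y,w]}R_{\sigma(y),\sigma(a)}(q)P_{\sigma(a),\sigma(w)}(q),\]
we see that the family $\{P_{\sigma(y),\sigma(w)}\}$ satisfies the same recursion and degree bound as $\{P_{y,w}\}$; uniqueness in Definition~\ref{def:kl-polys} therefore forces $P_{\sigma(y),\sigma(w)}(q)=P_{y,w}(q)$.

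Specializing to $y=e$ and $\sigma\in\{\e_A,\e_D\}$, we conclude $P_{e,\sigma(w)}(q)=P_{e,w}(q)$ and hence $h(\sigma(w))=h(w)$. The only step requiring any real verification is the identification of $\e_A$ and $\e_D$ as Coxeter automorphisms; the Kazhdan--Lusztig invariance is then a formal consequence of the recursion, which is presumably why the authors label the proposition as \emph{clear}.
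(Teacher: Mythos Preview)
Your proposal is correct and is precisely the argument the paper has in mind: the authors simply say ``It is clear from Definition~\ref{def:kl-polys}'', meaning exactly that a Coxeter system automorphism preserves length, descents, and Bruhat order, hence preserves the $R$-polynomials and then, by uniqueness, the $P$-polynomials. You have just written out the details of that one-line justification.
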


\subsubsection{Reflection subgroups}

By Theorem~\ref{thm:bp-smoothness}, we will be concerned with reflection subgroups isomorphic to $W(A_3)$ and $W(D_4)$ inside $W(A_{n-1})$ and $W(D_n)$.

The following classification follows from, e.g., Haenni \cite{Haenni}.

\begin{prop}
\label{prop:reflection-subgroups}
Reflection subgroups isomorphic to $W(A_3)$ and $W(D_4)$ inside $W(A_{n-1})$ and $W(D_n)$ are characterized as follows:
\begin{itemize}
    \item[(a)] No reflection subgroup $W' \subset W(A_{n-1})$ is isomorphic to $W(D_4)$,
    \item[(b)] Reflection subgroups $W' \cong W(A_3)$ inside $W(A_{n-1})$ are conjugate to the parabolic subgroup $W(A_{n-1})_{\{1,2,3\}}$,
    \item[(c)] Reflection subgroups $W' \cong W(D_4)$ inside $W(D_n)$ are conjugate to the parabolic subgroup $W(D_n)_{\{0,1,2,3\}}$.
    \item[(d)] Reflection subgroups $W' \cong W(A_3)$ inside  $W(D_n)$ come in two classes: those related to $W(D_n)_{\{1,2,3\}}$ by conjugacy and $\e_D$ (Class I), and those conjugate to $W(D_n)_{\{0,1,2\}}$ (Class II).
\end{itemize}
\end{prop}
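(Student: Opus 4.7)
The plan is to pass to closed sub-root-systems: a reflection subgroup $W'\subset W$ is determined by its set of reflections, equivalently by the closed subsystem $\Phi'=\{\alpha\in\Phi : s_\alpha\in W'\}\subset\Phi$. Two reflection subgroups are $W$-conjugate iff the corresponding subsystems are, and the Coxeter type of $W'$ equals the type of $\Phi'$. Thus (a)--(d) reduce to classifying closed sub-root-systems of types $A_3$ and $D_4$ inside $\Phi(A_{n-1})$ and $\Phi(D_n)$, up to $W$-action.

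For (a) and (b) the argument is elementary. Any closed subsystem of $\Phi(A_{n-1})=\{e_j-e_i\}$ decomposes as a disjoint union $\bigsqcup_b\{e_j-e_i : i,j\in T_b\}$ over the blocks $T_b$ of a set partition of a subset of $\{1,\ldots,n\}$, giving a product of type-$A$ Weyl groups. Hence no $D_4$ arises, proving (a); and an irreducible $A_3$ must come from a single $4$-block, all such blocks being $\S_n$-conjugate to $\{1,2,3,4\}$, proving (b).

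For (c) and (d), working in $\Phi(D_n)=\{\pm e_i\pm e_j\}$, I let $S\subset\{1,\ldots,n\}$ denote the set of coordinates used by $\Phi'$. By checking the possibilities for an irreducible simply-laced subsystem, a rank-$4$, $24$-root closed subsystem must have $|S|=4$ and consist of all $24$ roots on $S$ (the only rank-$4$ irreducible simply-laced alternative $A_{|S|-1}$ has only $20$ roots); transitivity of $W(D_n)$ on $4$-subsets then gives (c). For (d), a rank-$3$, $12$-root subsystem must have $|S|\in\{3,4\}$. If $|S|=3$ then $\Phi'$ fills the $D_3\cong A_3$ on $S$, forming one $W(D_n)$-orbit (Class II). If $|S|=4$ then $\Phi'$ is an $A_3$ sub-root-system of the $D_4$ on $S$; the three maximal-parabolic $A_3$s of $D_4$ correspond to removing one of the three leaves of the $D_4$ Dynkin diagram, with removal of $\delta_3$ reproducing the Class II case, while removal of $\delta_0$ or $\delta_1$ yields the two $W(D_n)$-orbits comprising Class I, which are interchanged by $\e_D$. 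That every $A_3$ subsystem of $D_4$ is $W(D_4)$-conjugate to a maximal-parabolic $A_3$ is standard and can be verified by direct enumeration or via $D_4$ triality.

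The distinctness of Class I from Class II follows from a span-dimension invariant ($4$ versus $3$), which $W(D_n)$ preserves. The main obstacle is the coordinate-count step ruling out exotic rank-$3$ or rank-$4$ closed subsystems with $|S|\geq 5$; I expect this to succumb to the short root-count argument above (a simply-laced irreducible subsystem using $\geq 5$ coordinates has rank $\geq 4$ and either too few or too many roots to match $A_3$ or $D_4$), or alternatively to be quoted from the full classification of closed subsystems of classical root systems in Haenni \cite{Haenni}.
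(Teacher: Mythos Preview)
The paper does not give its own proof of this proposition; it simply records that the classification ``follows from, e.g., Haenni \cite{Haenni}.'' Your direct argument via closed sub-root-systems therefore supplies strictly more detail than anything in the paper, and the overall structure---the set-partition description in type $A$ for (a)--(b), and the coordinate-support and orbit analysis in type $D$ for (c)--(d)---is correct.

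The one genuine soft spot is the step you flag yourself. Your proposed root-count justification for $|S|\le 4$ is circular as written: the parenthetical ``the only rank-$4$ irreducible simply-laced alternative $A_{|S|-1}$'' already presumes that an irreducible closed subsystem supported on $S$ must be of type $A_{|S|-1}$ or $D_{|S|}$, which is essentially Haenni's classification. Knowing only that $\Phi'$ is irreducible of rank $4$ with $24$ roots does not by itself force $|S|=4$. A clean elementary fix is to analyze simple roots directly: if $\alpha_2$ is the branch node of the $D_4$ diagram (respectively the middle node of the $A_3$ diagram), each adjacent simple root must share exactly one coordinate index with $\alpha_2$, and the pairwise orthogonality constraints among the leaves then pin down the total coordinate support to size exactly $4$ (respectively at most $4$). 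With that in hand, your orbit analysis goes through. Your fallback of citing Haenni at this point is of course also valid, and is exactly what the paper does.

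One small terminological slip: in distinguishing Class~I from Class~II you invoke a ``span-dimension invariant ($4$ versus $3$)'', but the span of $\Phi'$ has dimension $3$ in both cases (it is the rank of $A_3$). The invariant you want, and are clearly computing, is $|S|$, the size of the coordinate support, which is indeed $W(D_n)$-stable and equals $4$ versus $3$.
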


\subsubsection{One line notation and patterns}
\label{sec:pattern-conventions}
We will be interested in occurrences of the patterns from Theorem~\ref{thm:bp-smoothness} in elements $w \in W(A_{n-1})$ or $W(D_n)$. For $w \in W(D_n)$, it will sometimes be useful for us to distinguish between Class I and II patterns (see Proposition~\ref{prop:reflection-subgroups}). Realizing these Weyl groups as $\S_n$ and $\D_n$, respectively, we have the following interpretations of pattern containment (summarized in Figure~\ref{fig:smooth-patterns}). This approach to pattern containment is in some sense a hybrid between the approaches of Billey \cite{Billey-signed-patterns} using signed patterns and of Billey, Braden, and Postnikov \cite{Billey-Postnikov, Billey-Braden} using patterns in the sense of Definition~\ref{def:pattern-avoidance}. Our distinction between Class I and II patterns is seemingly novel and reflects the disparate effects that occurrences of these patterns can have on $h(w)$.

\begin{defin} \text{}
\label{def:signed-pattern}
\begin{itemize}
    \item[(i)] For $p$ a signed permutation of $[k]$, we say $w \in \D_n$ contains $p$ at positions $1 \leq i_1 < \cdots < i_k \leq n$ if $\sgn(w(i_j))=\sgn(p(j))$ for $j=1,\ldots,k$ and $|w(i_1)|,\ldots,|w(i_k)|$ are in the same relative order as $|p(1)|,\ldots,|p(k)|$.
    \item[(ii)] For $p \in \S_k$, we say $w \in \S_n$ contains $p$ at positions $1 \leq i_1 < \cdots < i_k \leq n$ if $w(i_1),\ldots,w(i_k)$ have the same relative order as $p(1),\ldots,p(k)$. We say $u \in \D_n$ contains $p$ at positions $i_1 < \cdots <i_k$, where each $i_j \in \pm [n]$ if $u(i_1),\ldots,u(i_k)$ have the same relative order as $p(1),\ldots,p(k)$ and $|i_1|,\ldots,|i_k|$ are distinct.
\end{itemize}
In each case, we say that the \emph{values} of the occurrence are $w(i_1),\ldots,w(i_k)$.
\end{defin}

\begin{remark}
Whenever we refer to an occurrence of $3412$ or $4231$ in $w \in \D_n$, we always mean an occurrence in the sense of Definition~\ref{def:signed-pattern}(ii).
\end{remark}

The following is a translation of Theorem~\ref{thm:bp-smoothness} in light of our conventions for patterns. We use $\pm$ to indicate pairs of patterns differing in the sign of the first position; for example, $\pm 1 2 \bar{3}$ refers to the two patterns $\bar{1} 2 \bar{3}$ and $1 2 \bar{3}$.

\begin{prop}
\label{prop:hybrid-patterns}
Let $G$ be simply-laced; then $X_w \subset G/B$ is smooth if and only if $w$ avoids the patterns $3412, \pm 1 2 \bar{3}, 4231, \pm 1 \bar{3} \bar{2},$ and $\pm 1 4\bar{3}2$ (see Figure~\ref{fig:smooth-patterns}).
\end{prop}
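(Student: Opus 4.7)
The plan is to work through the four cases of Proposition~\ref{prop:reflection-subgroups} and translate each Coxeter-theoretic pattern-containment condition from Theorem~\ref{thm:bp-smoothness} into window-notation conditions matching Definition~\ref{def:signed-pattern}. For each type of reflection subgroup $W' \subset W$ and each pattern $p$ of Theorem~\ref{thm:bp-smoothness} that can lie in $W'$ (up to isomorphism), I first compute the window notation of $p$ inside $W'$, and then use the inversion-set description of $\fl$ from Proposition~\ref{prop:flattening-map} to turn the condition $\fl(w) = \ph^{-1}(p)$ into explicit inequalities on the values $w(i_j)$ at the coordinates supporting $W'$. Throughout, I use that each of $s_2 s_1 s_3 s_2, s_1 s_2 s_3 s_2 s_1 \in W(A_3)$ is fixed by the $A_3$ diagram automorphism and that $s_2 s_0 s_1 s_3 s_2 \in W(D_4)$ is fixed by triality; consequently the choice of isomorphism $\ph\colon W' \xrightarrow{\sim} W(A_3)$ or $W' \xrightarrow{\sim} W(D_4)$ does not affect the target, so each $W'$ contributes a single condition on $w$.

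For $w \in W(A_{n-1}) = \S_n$, Proposition~\ref{prop:reflection-subgroups}(a,b) reduces the problem to parabolic-type $A_3$ subgroups, and $\fl$ simply reads off the relative order at four positions; since $s_2 s_1 s_3 s_2 = 3412$ and $s_1 s_2 s_3 s_2 s_1 = 4231$ in one-line notation, the condition becomes classical $3412$/$4231$ avoidance, i.e.\ Definition~\ref{def:signed-pattern}(ii). For $w \in \D_n$ and a Class~I $A_3$ subgroup (Proposition~\ref{prop:reflection-subgroups}(d)), a short check combining $\D_n$-conjugation with $\e_D$ shows these subgroups are exactly the $\S_4$-copies acting on four signed positions $i_1 < \cdots < i_4 \in \pm[n]$ with distinct absolute values; flattening again reads the relative order on those signed positions, matching Definition~\ref{def:signed-pattern}(ii).

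The more substantive cases are Class~II $A_3$ and $D_4$. A Class~II $A_3$ subgroup is conjugate to $W_{\{0,1,2\}}$, whose root system is the $D_3$ subsystem on three coordinates; direct computation gives $s_2 s_1 s_3 s_2 \mapsto s_2 s_0 s_1 s_2 = [\bar 1, 2, \bar 3]$ and $s_1 s_2 s_3 s_2 s_1 \mapsto s_0 s_2 s_1 s_2 s_0 = [1, \bar 3, \bar 2]$ in window notation. Translating $\fl_{\{0,1,2\}}(w) = [\bar 1, 2, \bar 3]$ by checking each of the six positive roots $\{e_b \pm e_a, e_c \pm e_a, e_c \pm e_b\}$ of $\Phi'$ against $\inv(w)$ yields, after algebraic manipulation, conditions forcing $|w(a)| < w(b)$ with $w(b) > 0$ and $|w(c)| > w(b)$ with $w(c) < 0$, i.e.\ an occurrence of $\pm 1 2 \bar 3$ in the sense of Definition~\ref{def:signed-pattern}(i); the same analysis for $[1, \bar 3, \bar 2]$ produces $\pm 1 \bar 3 \bar 2$. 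For the $D_4$ subgroup, conjugate by Proposition~\ref{prop:reflection-subgroups}(c) to $W_{\{0,1,2,3\}}$, one computes $s_2 s_0 s_1 s_3 s_2 = [\bar 1, 4, \bar 3, 2]$ and translates $\fl_{\{0,1,2,3\}}(w) = [\bar 1, 4, \bar 3, 2]$ analogously, recovering the $\pm 1 4 \bar 3 2$ condition.

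The main obstacle is the Class~II $A_3$ case. Unlike Class~I, where flattening only inspects order-inversions (yielding a direct relative-order condition), Class~II flattening must track all six positive roots of the ambient $D_3$ subsystem — including the sign-flip roots $e_a + e_b$ — and the resulting six inequalities must be carefully combined to pin down both the signs and the absolute-value ordering of the hybrid pattern. In particular, one must verify that the $\pm$-ambiguity on the first coordinate of $\pm 1 2 \bar 3$ (resp.\ $\pm 1 \bar 3 \bar 2$) is exactly the freedom left after imposing the inversion constraints, and is neither artificially introduced nor missed. Once this delicate accounting is checked, the remaining cases assemble routinely into the equivalence with Theorem~\ref{thm:bp-smoothness}.
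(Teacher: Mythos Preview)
Your proposal is correct and considerably more detailed than what the paper does: the paper states Proposition~\ref{prop:hybrid-patterns} without proof, introducing it only with the sentence ``The following is a translation of Theorem~\ref{thm:bp-smoothness} in light of our conventions for patterns.'' Your case-by-case reduction via Proposition~\ref{prop:reflection-subgroups}, together with the explicit window computations (e.g.\ $s_2 s_0 s_1 s_2 = [\bar 1, 2, \bar 3]$ in $\D_3$ and $s_2 s_0 s_1 s_3 s_2 = [\bar 1, 4, \bar 3, 2]$ in $\D_4$) and the inversion-set analysis of $\fl$, is exactly the verification the paper leaves to the reader; in particular your observation that the $\pm$ on the first entry arises because the two conditions $e_b \pm e_a \notin \inv(w)$ together force $|w(a)| < w(b)$ and $w(b) > 0$ but leave $\sgn(w(a))$ free is the correct explanation of that notational choice.
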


\begin{figure}
    \centering
    \begin{tabular}{|c|c|c|c|}
    \hline 
    Type & Class & Pattern & One-line \\  \hline
    $A_3$ & I & $s_2s_1s_3s_2$ & $3412$ \\ \hline
    $A_3$ & II & $s_2s_1s_3s_2$ & $\pm 1 2 \bar{3}$ \\ \hline
    $A_3$ & I & $s_1s_2s_3s_2s_1$ & $4231$ \\ \hline
    $A_3$ & II & $s_1s_2s_3s_2s_1$ & $\pm 1 \bar{3} \bar{2}$ \\ \hline
    $D_4$ &  & $s_2s_0s_1s_3s_2$ & $\pm 1 4 \bar{3} 2$ \\ \hline
    \end{tabular}
    \caption{The patterns from Theorem~\ref{thm:bp-smoothness} with their one-line notations, divided according to type and class as discussed in Section~\ref{sec:pattern-conventions}}
    \label{fig:smooth-patterns}
\end{figure}

The following statistic on occurrences of the pattern $3412$ will be of special importance for us (see Theorem~\ref{thm:type-A-formula}).

\begin{defin}[See \cite{Cortez-2, Woo}]
\label{def:height}
We say an occurrence of $3412$ in $w \in \S_n$ or $\D_n$ at positions $a<b<c<d$ has \emph{height} equal to $w(a)-w(d)$. We let $\mh(w)$ denote the minimum height over all occurrences of $3412$ in $w$.
\end{defin}

\section{Upper bounds on $h(w)$}
\label{sec:upper-bounds}
\subsection{Proof strategy}
We will identify certain patterns $p$ (among those from Proposition~\ref{prop:hybrid-patterns}) such that if $w$ contains $p$, then $h(w)$ can be computed using Theorem~\ref{thm:bjorner-ekedahl} and an analysis of the Bruhat covers of $w$. Then, for $w$ avoiding these patterns and containing others, we will---by a combination of parabolic reduction (Theorem~\ref{thm:h-increases-in-parabolic}), inversion (Proposition~\ref{prop:kl-of-inverse}), and diagram automorphisms (Proposition~\ref{prop:auto-preserves-h})---obtain a bound $h(w) \leq h(u)$ for $u$ in some special family $\mathcal{S}$. Finally, we will show that elements $u \in \mathcal{S}$ have distinguished BP-decompositions such that the base and fiber in the bundle (Theorem~\ref{thm:bp-decomposition-fiber-bundle}) with total space $X_u$ can be understood, allowing for the computation of $h(u)$. For convenience, in the remainder of the paper we will refer primarily to the elements $w \in W$ rather than the Schubert varieties $X_w$ that they index, although each of these steps has a geometric basis. We say $w$ is \emph{smooth} (resp. \emph{singular}) if $X_w$ is smooth (resp. singular).

\subsection{Relations among Bruhat covers}
Write $r_{\beta}$ for the reflection corresponding to the root $\beta \in \Phi^+$.

\begin{lemma}\label{lem:relation-corank1}
Let $wr_{\beta_1}, \ldots, wr_{\beta_k}$ be the elements covered by $w$ in Bruhat order. If $\beta_1,\ldots, \beta_k$ are linearly dependent, then $h(w)=1$.
\end{lemma}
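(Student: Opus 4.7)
The plan is to invoke Theorem~\ref{thm:bjorner-ekedahl}: since $[q^0]L(w) = [q^{\ell(w)}]L(w) = 1$ automatically, $h(w) = 1$ will follow from the strict inequality $[q^1]L(w) < [q^{\ell(w)-1}]L(w)$. I would interpret the left side as $|\supp(w)|$ (the number of simple reflections below $w$) and the right side as $k$ (the number of Bruhat covers of $w$), so the task reduces to showing $|\supp(w)| < k$.

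The key step will be a spanning claim: $\spn\{\beta_1,\ldots,\beta_k\} \supseteq \spn\{\alpha_j : j \in \supp(w)\}$. Granting this, the hypothesized linear dependence of $\beta_1,\ldots,\beta_k$ forces $\dim\spn\{\beta_i\} \leq k-1$, and combining with $\dim\spn\{\beta_i\} \geq |\supp(w)|$ gives $|\supp(w)| \leq k-1 < k$, as required.

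I would prove the spanning claim by induction on $\ell(w)$: picking a right descent $s \in D_R(w)$ and setting $w' = ws$, each cover $y' = w'r_{\beta'} \lessdot w'$ is handled in one of two ways. If $s \notin D_R(y')$, then $y's$ is a cover of $w$ with root $s\beta' \equiv \beta' \pmod{\R\alpha_s}$, so $\beta'$ lies in the span of $\alpha_s$ together with this new cover root of $w$. If $s \in D_R(y')$, then the rank-$2$ interval $[y',w]$ is a diamond by the standard lemma for rank-$2$ Bruhat intervals, so there is an alternative cover $z' \neq w'$ between $y'$ and $w$; the root $\gamma$ of the cover $z' \lessdot w$ then satisfies $\beta' \in \spn\{\alpha_s, \gamma\}$ by a direct rank-$2$ root-system check (the subsystem is one of $A_1 \times A_1$, $A_2$, $B_2$, or $G_2$, and in each case any positive root is determined by two others in the system). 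In either case, every cover root of $w'$ lies in the span of $\{\alpha_s\}$ together with cover roots of $w$, and combining with $\alpha_s$ itself (the cover root from $w' \lessdot w$) and the inductive hypothesis $\spn\{\text{cover roots of }w'\} \supseteq V_{w'}$ yields the spanning statement for $w$.

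The main obstacle is verifying the spanning claim, particularly the rank-$2$ diamond analysis in the second case; the reduction via Theorem~\ref{thm:bjorner-ekedahl} and the final linear-algebraic conclusion are both immediate.
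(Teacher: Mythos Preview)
Your overall strategy is exactly the paper's: use Theorem~\ref{thm:bjorner-ekedahl} to reduce $h(w)=1$ to the inequality $[q^1]L(w)<[q^{\ell(w)-1}]L(w)$, i.e.\ $|\supp(w)|<k$, and deduce this from the spanning claim $\spn\{\alpha_j:j\in\supp(w)\}\subseteq\spn\{\beta_1,\dots,\beta_k\}$ together with the hypothesized dependence. The paper obtains the spanning claim (in fact equality) by citing Dyer \cite{Dyer}; you instead give a direct inductive proof of the one containment you need.

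Your induction is correct. Case~1 is clean: when $s\notin D_R(y')$ the lifting property gives $y's\lessdot w$ with root $s\beta'$, and $\beta'\in\spn\{\alpha_s,s\beta'\}$. In Case~2 the point that deserves one more sentence is \emph{why} the four edge-roots of the diamond $[y',w]$ lie in a rank-$2$ subsystem. This follows from the identity $s\,r_{\beta'}=r_\gamma r_\delta$ (both equal $w^{-1}y'$): a product of two reflections fixes the orthogonal complement of the plane spanned by the two roots and acts nontrivially on that plane (here $s\neq r_{\beta'}$ since $\beta'\neq\alpha_s$), so comparing fixed spaces forces $\spn\{\alpha_s,\beta'\}=\spn\{\gamma,\delta\}$. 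Since $\alpha_s\neq\gamma$ (as $w'\neq z'$) these two span the plane, whence $\beta'\in\spn\{\alpha_s,\gamma\}$. With that made explicit, your argument goes through.

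The trade-off: the paper's route is a one-line citation, while yours is self-contained and recovers (half of) Dyer's result as a byproduct. Either is fine.
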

\begin{proof}
Let $\gamma_1,\ldots,\gamma_m \in \Delta$ be the simple roots whose corresponding simple reflections $r_{\gamma_i}$ are $\leq w$. By results of Dyer \cite{Dyer}, we have
\[\spn_{\R}(\gamma_1,\ldots,\gamma_m)=\spn_{\R}(\beta_1,\ldots,\beta_k).\]
The $\{\gamma_i\}_i$ are linearly independent, so the result follows by Theorem~\ref{thm:bjorner-ekedahl}.
\end{proof}

\begin{lemma}\label{lem:split-inversion-once}
Let $w\in W$ be simply-laced, and $\beta\in\inv(w)$. If $wr_{\beta}$ is not covered by $w$, then there exist $\beta_1,\beta_2\in\inv(w)$ such that $\beta_1+\beta_2=\beta$.
\end{lemma}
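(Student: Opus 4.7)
The plan is to reduce the length drop $\ell(w)-\ell(wr_\beta)$ to a count of elements of $\inv(w)$ lying in a specific set of positive roots that partitions into pairs summing to $\beta$.

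First, consider the set
\[A \coloneqq \{\alpha \in \Phi^+ \setminus\{\beta\} : r_\beta(\alpha) \in \Phi^-\}.\]
Because the ambient root system is simply-laced, $\langle \alpha, \beta^\vee\rangle \in \{-1,0,1\}$ for every $\alpha \in \Phi \setminus\{\pm\beta\}$, so $r_\beta(\alpha) = \alpha - \langle \alpha, \beta^\vee\rangle \beta$ differs from $\alpha$ by $0$ or $\pm\beta$. Consequently $A = \{\alpha \in \Phi^+ : \alpha\neq\beta,\ \beta-\alpha\in\Phi^+\}$, and $\alpha\mapsto\beta-\alpha$ is a fixed-point-free involution on $A$ (a fixed point would force $2\alpha=\beta$, impossible for roots), partitioning $A$ into $|A|/2$ pairs $\{\alpha,\beta-\alpha\}$ summing to $\beta$.

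Next I would establish the formula
\[\ell(w) - \ell(wr_\beta) = 2|\inv(w)\cap A| - |A| + 1.\]
To prove it, partition $\Phi^+\setminus\{\beta\}$ by the value of $\langle\alpha,\beta^\vee\rangle\in\{-1,0,1\}$ into three parts: a set $T_0$ fixed pointwise by $r_\beta$; the set $A$ above (where $\langle\alpha,\beta^\vee\rangle=1$ and $r_\beta(\alpha)\in\Phi^-$); and two sets $T_\pm$ (where $r_\beta(\alpha)\in\Phi^+$) interchanged by the bijection $r_\beta$. Directly read off the membership of each $\alpha$ in $\inv(wr_\beta)$ from its membership in $\inv(w)$ case by case. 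The contributions from $T_0$, $T_+$, and $T_-$ match up via these bijections and cancel in the subtraction $|\inv(w)|-|\inv(wr_\beta)|$, leaving exactly the formula above (with the $+1$ coming from $\beta$ itself, which lies in $\inv(w)\setminus\inv(wr_\beta)$).

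Finally, the hypothesis that $wr_\beta$ is not covered by $w$ forces $\ell(w)-\ell(wr_\beta)\geq 3$, hence $|\inv(w)\cap A| \geq |A|/2 + 1$. Pigeonhole applied to the $|A|/2$ pairs partitioning $A$ then produces a pair $\{\beta_1,\beta_2\}\subseteq\inv(w)$, and by construction $\beta_1+\beta_2=\beta$, as required. The main obstacle is the bookkeeping in step two; the simply-laced hypothesis is essential both there (bounding $|\langle\alpha,\beta^\vee\rangle|\leq 1$ so that $r_\beta$ moves each positive root by at most $\pm\beta$) and for ensuring the clean fixed-point-free pairing of $A$ that the pigeonhole step exploits.
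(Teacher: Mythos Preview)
Your proof is correct and follows essentially the same approach as the paper's: partition $\Phi^+\setminus\{\beta\}$ according to whether $r_\beta$ sends a root to $\Phi^+$ or $\Phi^-$, pair the latter set as $\{\alpha,\beta-\alpha\}$, and compare $|\inv(w)|$ with $|\inv(wr_\beta)|$ to force some pair entirely into $\inv(w)$. The only cosmetic differences are that the paper calls your set $A$ by the name $B$ (and vice versa) and argues the final step by contradiction rather than via your explicit length formula and pigeonhole.
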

\begin{proof}
Given $\beta\in\Phi^+$, divide the positive roots into $\Phi^+=\{\beta\}\sqcup A\sqcup B$ where \[A=\{\alpha\in\Phi^+\:|\: r_{\beta}\alpha\in\Phi^+\},\quad B=\{\alpha\in\Phi^+\:|\:r_{\beta}\alpha\in\Phi^-,\alpha\neq\beta\}.\]
Note that $\beta\in\inv(w)$ and $\beta\notin\inv(wr_{\beta})$. We have $\alpha\in A\cap\inv(w)$ if and only if $r_{\beta}\alpha\in A\cap\inv(wr_{\beta})$, this determines a bijection between $A\cap\inv(w)$ and $A\cap\inv(wr_{\beta})$. Now for $\alpha\in B$, we have $r_{\beta}\alpha=\alpha-\frac{2\langle\alpha,\beta\rangle}{\langle\beta,\beta\rangle}\beta\in\Phi^-$ so we must have $r_{\beta}\alpha=\alpha-\beta$ with $\beta-\alpha\in\Phi^+$. We can pair each root $\alpha$ in $B$ with $\beta-\alpha=-r_{\beta}\alpha$. As $\beta\in\inv(w)$, at least one of $\alpha,\beta-\alpha$ is in $\inv(w)$. Moreover, $\alpha\in\inv(w)$ if and only if $\beta-\alpha\notin\inv(wr_{\beta})$. If exactly one of $\alpha,\beta-\alpha$ is in $\inv(w)$ for all such pairs in $B$, then we see that $|\inv(w)|-|\inv(wr_{\beta})|=1$, contradicting the assumption that $w$ does not cover $wr_{\beta}$. As a result, there must be some $\alpha,\beta-\alpha\in\inv(w)$. 
\end{proof}

A repeated application of Lemma~\ref{lem:split-inversion-once} gives the following:
\begin{lemma}\label{lem:split-inversion-all}
Let $w\in W$ be simply-laced and $\beta\in\inv(w)$. Then there exist $\beta_1,\ldots,\beta_k\in\inv(w)$ such that $\beta=\beta_1+\cdots+\beta_k$ and $w\gtrdot wr_{\beta_i}$ for $i=1,\ldots,k$. Note that the choice of $\beta_1,\ldots,\beta_k$ is not unique.
\end{lemma}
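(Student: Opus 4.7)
The plan is to induct on the height $\operatorname{ht}(\beta)$ of $\beta$, meaning the sum of the coefficients when $\beta$ is expanded in the basis of simple roots $\Delta$.

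For the base case, suppose $\operatorname{ht}(\beta)=1$. Then $\beta$ is simple, so $r_\beta$ is a simple reflection; combined with $\beta\in\inv(w)$, this forces $\ell(wr_\beta)=\ell(w)-1$, hence $w\gtrdot wr_\beta$, and we may take $k=1$ with $\beta_1=\beta$.

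For the inductive step, assume the result for every root of height strictly less than $\operatorname{ht}(\beta)$. If $w\gtrdot wr_\beta$ already holds, then $k=1$, $\beta_1=\beta$ again works. Otherwise Lemma~\ref{lem:split-inversion-once} yields $\beta',\beta''\in\inv(w)$ with $\beta'+\beta''=\beta$. Because $\beta'$ and $\beta''$ are positive roots summing to $\beta$, each satisfies $\operatorname{ht}(\beta'),\operatorname{ht}(\beta'')<\operatorname{ht}(\beta)$. The inductive hypothesis applied to $\beta'$ and to $\beta''$ then gives decompositions $\beta'=\beta'_1+\cdots+\beta'_{k'}$ and $\beta''=\beta''_1+\cdots+\beta''_{k''}$ with all summands in $\inv(w)$ and $w\gtrdot wr_{\beta'_i}$, $w\gtrdot wr_{\beta''_j}$. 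Concatenating produces the required decomposition of $\beta$.

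There is no real obstacle beyond correctly formalizing the induction, since all the genuine content — namely, that the failure of $wr_\beta$ to cover $w$ forces an additive splitting of $\beta$ within $\inv(w)$ — is already done in Lemma~\ref{lem:split-inversion-once}. The point of this lemma is simply to repackage iterated applications of that splitting into a clean statement suitable for later use, and the non-uniqueness note reflects that at each recursive step the pair $(\beta',\beta'')$ supplied by Lemma~\ref{lem:split-inversion-once} is itself generally not unique.
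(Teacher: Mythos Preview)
Your proof is correct and is essentially the same as the paper's: the paper simply remarks that the lemma follows by ``a repeated application of Lemma~\ref{lem:split-inversion-once},'' and your induction on $\operatorname{ht}(\beta)$ is precisely the standard way to formalize that iteration.
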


If $w\gtrdot wr_{\beta}$ we sometimes say that $\beta$ is \emph{a label below $w$}. 

\begin{prop}\label{prop:h-equals-1}
Let $w \in \S_n$ or $\D_n$; we have $h(w)=1$ if $w$ contains:
\begin{itemize}
    \item[(i)] $4231$ and $w \in \S_n$,
    \item[(ii)] $\pm 1 2 \bar{3}$,
    \item[(iii)] $\pm 1 4 \bar{3} 2$, or
    \item[(iv)] $3412$ of height one.
\end{itemize}
\end{prop}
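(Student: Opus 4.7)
We apply Lemma~\ref{lem:relation-corank1} in each case, exhibiting a set of Bruhat covers of $w$ whose root labels are linearly dependent and thus forcing $h(w) = 1$.

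Case (iv) is the cleanest instance: the height-1 condition $w(a) = w(d) + 1$ means that no integer lies strictly between $w(a)$ and $w(d)$, so the transposition $(a,d)$ has no possible blocker between its endpoints and therefore yields a Bruhat cover of $w$ with label $e_d - e_a$. After choosing the $3412$ occurrence to be minimal in an appropriate lexicographic order (for example $(d-a, b-a, d-c)$), the transpositions $(a,c), (b,c), (b,d)$ are also Bruhat covers: any blocker between pattern positions would produce a strictly smaller height-1 occurrence of the pattern, contradicting minimality. The resulting four labels satisfy the parallelogram relation
\[
(e_c - e_a) + (e_d - e_b) = (e_d - e_a) + (e_c - e_b),
\]
giving the desired linear dependence. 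Case (i) ($4231$ in $\S_n$) proceeds analogously: for a minimally chosen $4231$ occurrence at $a < b < c < d$, the transpositions $(a,b), (a,c), (b,d), (c,d)$ are covers, and the two natural chain-decompositions of $e_d - e_a$ through $b$ and through $c$ give
\[
(e_b - e_a) + (e_d - e_b) = e_d - e_a = (e_c - e_a) + (e_d - e_c),
\]
so these four labels are linearly dependent.

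For cases (ii) and (iii) we work in the type-$D$ root system, where the sign changes in the patterns produce Bruhat covers via reflections $r_{e_j + e_i}$ in addition to the type-$A$ reflections $r_{e_j - e_i}$. In case (ii), the three pattern positions $a < b < c$ give rise to four labels $e_c - e_a,\ e_c + e_a,\ e_c - e_b,\ e_c + e_b$ satisfying the visible relation
\[
(e_c - e_a) + (e_c + e_a) = 2 e_c = (e_c - e_b) + (e_c + e_b),
\]
so Lemma~\ref{lem:relation-corank1} applies directly. Case (iii) requires an analogous four-position analysis combining $e_j - e_i$ and $e_j + e_i$ type labels coming from the pair of sign changes in the pattern $\pm 1\, 4\, \bar 3\, 2$.

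The main obstacle throughout is verifying that the designated reflections are genuine Bruhat covers (length reduction exactly one), rather than merely Bruhat relations. This is handled by the minimal-occurrence trick sketched above: any blocker between pattern positions of a minimal occurrence would produce a strictly smaller occurrence of the same pattern, contradicting minimality. In the (relatively rare) situations where a blocker lies outside the pattern interval and cannot be eliminated this way, one instead applies Lemma~\ref{lem:split-inversion-all} to decompose the obstructed inversion into a chain of genuine labels; combining this chain with the unobstructed cover $(a,d)$ (or its analog in cases (ii)--(iii)) again produces the required linear dependence.
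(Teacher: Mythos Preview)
Your minimal-occurrence argument does not actually force the designated reflections to be Bruhat covers, and the gap is not just in ``rare'' cases.

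For case (i), take $w=52431\in\S_5$. There are exactly two occurrences of $4231$, at positions $(1,2,3,5)$ and $(1,2,4,5)$. For the first occurrence the transposition $(c,d)=(3,5)$ is blocked by position $4$ (value $3$); for the second, $(a,c)=(1,4)$ is blocked by position $3$ (value $4$). So no choice of occurrence makes all four of your reflections into covers, and no lexicographic ordering of occurrences can fix this. For case (iv), take $w=35142\in\S_5$: the \emph{unique} $3412$ occurrence is at positions $(1,2,3,5)$ with height one, and the transposition $(b,d)=(2,5)$ is blocked by position $4$ (value $4$). That blocker does not sit inside any other $3412$ pattern at all, so your minimality assertion (``any blocker between pattern positions would produce a strictly smaller height-1 occurrence'') is simply false.

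Your fallback---split the obstructed inversion via Lemma~\ref{lem:split-inversion-all} and combine with the remaining covers---is in fact what the paper does from the outset in every case, never attempting to certify the original four roots as covers. But the substantive content of the argument, which you omit, is showing that the resulting relation among cover labels is \emph{nontrivial} after splitting. The paper handles this separately in each case by exhibiting a feature that must appear on one side and cannot appear on the other: in case (i) the index $c$ cannot arise as an endpoint on the left because $e_c-e_b\notin\inv(w)$; in case (iv) the label $e_d-e_a$ is itself a cover (by the height-one hypothesis) but is strictly larger in root order than either $e_d-e_b$ or $e_c-e_a$, hence cannot occur in their splittings. Without such an argument the split relation can collapse to $0=0$, as the paper's own example $w=45312$ (a height-two $3412$) demonstrates, and Lemma~\ref{lem:relation-corank1} gives nothing.
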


\begin{proof}
The strategies for all cases are exactly the same. A bad pattern implies a relation $\tau_1+\tau_2=\tau_3+\tau_4$ for $\tau_1,\tau_2,\tau_3,\tau_4\in\inv(w)$. Then we use Lemma~\ref{lem:split-inversion-all} to write this equation as a relation for the labels of covers below $w$. The key step is proving that this relation is nontrivial. Finally, we can conclude with Lemma~\ref{lem:relation-corank1}. This strategy will also be used in Proposition~\ref{prop:type-D-4231-upper-bound}.

Case (i). Suppose that $w$ contains $4231$ at indices $a<b<c<d$. We then have a relation $(e_d-e_b)+(e_b-e_a)=(e_d-e_c)+(e_c-e_a)$ with $e_b-e_a,e_d-e_b,e_c-e_a,e_d-e_c\in\inv(w)$. By Lemma~\ref{lem:split-inversion-all}, we can further split the left hand side into $e_{p_k}-e_{p_{k-1}},\ldots,e_{p_2}-e_{p_1},e_{p_1}-e_{p_0}$ and the right hand side into $e_{q_m}-e_{q_{m-1}},\ldots,e_{q_{1}}-e_{q_0}$ where $p_0=q_0=a$, $p_k=q_m=d$, and each $wr_{e_{p_{i+1}}-e_{p_{i}}}$, $wr_{e_{q_{j+1}}-e_{q_{j}}}$ is covered by $w$. We have $p_i=b$, $q_j=c$ for some $i,j$ and $e_c-e_b$ is not an inversion of $w$, meaning that $c$ cannot appear as an endpoint $p_{i'}$ on the left hand side. But $c$ is an endpoint $q_j$ on the right hand side. Thus, after canceling some common entries, the equation \[(e_{p_k}-e_{p_{k-1}})+\cdots+(e_{p_1}-e_{p_0})=(e_{q_{m}}-e_{q_{m-1}})+\cdots+(e_{q_1}-e_{q_0})\]
is still nontrivial. We then apply Lemma~\ref{lem:relation-corank1} to conclude that $h(w)=1$.

Case (ii). Suppose that $w$ contains $\pm12\bar3$ at indices $0<a<b<c$, then\[(e_c-e_b)+(e_c+e_b)=(e_c-e_a)+(e_c+e_a).\]
It is more convenient to view this equation as \[(e_c-e_b)+(e_b-e_{\bar c})=(e_c-e_a)+(e_a-e_{\bar c})\]
where we adopt the convention that $e_{\bar i}=-e_i$. Further split these four terms using Lemma~\ref{lem:split-inversion-all} into covers below $w$. As before, since $b$ is an endpoint of the left hand side and $w(b)>w(a), w(\bar a)$, we know that neither $a$ nor $\bar a$ can appear on the left hand side. As a result, we have obtained a nontrivial relation among the positive roots that label cover relations below $w$, and Lemma~\ref{lem:relation-corank1} implies that $h(w)=1$.

Case (iii). Suppose that $w$ contains $\pm14\bar32$ at indices $0<a<b<c<d$, and consider the following equation of positive roots in $\inv(w)$: \[(e_c-e_b)+(e_d+e_c)=(e_d-e_b)+(e_c-e_a)+(e_c+e_a).\]
As we split these roots according to Lemma~\ref{lem:split-inversion-all}, $e_a$ does not appear on the left hand side because $\pm a$ do not belong to the interval $(b,c)$ and $w(\pm a)$ do not belong to the interval $(w(c),w(\bar d))$. Thus, we obtain a nontrivial relation among labels below $w$, so $h(w)=1$.

Case (iv). Suppose that $w$ contains an occurrence of $3412$ of height one at indices $a<b<c<d$. Then \[(e_d-e_a)+(e_c-e_b)=(e_d-e_b)+(e_c-e_a).\]
Since this occurrence has height one, $w$ covers $wr_{e_d-e_a}$. However, $e_d-e_a$ cannot possibly appear on the right hand side after applying the splitting in Lemma~\ref{lem:split-inversion-all}, as both $e_d-e_b$ and $e_c-e_a$ are smaller than $e_d-e_a$ in the root order (that is, they are equal to $e_d-e_a$ minus a positive linear combination of the positive roots). Thus a nontrivial relation is produced and $h(w)=1$ by Lemma~\ref{lem:relation-corank1}. 
\end{proof}
\begin{ex}
We use a non-example to demonstrate a key step in the proof of Proposition~\ref{prop:h-equals-1}. Suppose that $w=45312$, which contains an occurrence of $3412$ of height $2$. The $3412$ pattern allows us to write \[(e_4-e_1)+(e_5-e_2)=(e_5-e_1)+(e_4-e_2).\]
As $w(3)=3$, none of these four roots is a label below $w$. After splitting as in Lemma~\ref{lem:split-inversion-all}, the equation is reduced to the trivial relation \[(e_4-e_3)+(e_3-e_1)+(e_5-e_3)+(e_3-e_2)=(e_5-e_3)+(e_3-e_1)+(e_4-e_3)+(e_3-e_2).\] 
\end{ex}

\subsection{Proof of Theorem~\ref{thm:simply-laced-upper-bound} in Type $A$}\label{sub:typeA-upper-bound}
In this section we obtain an upper bound on $h(w)$ for $w \in \S_n$ in terms of $\mh(w)$; this establishes Theorem~\ref{thm:simply-laced-upper-bound} for $W=\S_n$ as well as one direction of Theorem~\ref{thm:type-A-formula}.

\begin{defin}\label{def:palindromic-to-k}
For a polynomial $f\in\Z_{\geq0}[q]$ of degree $d$, we say that $f$ is \emph{top-heavy} if $[q^i]f\leq[q^{d-i}]f$ for all $0\leq i\leq d/2$. We define \[h(f):=\min\{0\leq i\leq d/2\:|\: [q^i]f<[q^{d-i}]f\}.\] 
If $f$ is palindromic, we make the convention that $h(f)=+\infty$. 
\end{defin}

The following lemma is a simple exercise, for which we omit the proof.
\begin{lemma}\label{lem:min-of-h}
If polynomials $f_1,f_2 \in\Z_{\geq0}[q]$ are top-heavy, then we have $h(f_1f_2)=\min(h(f_1),h(f_2))$.
\end{lemma}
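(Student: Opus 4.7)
The plan is to work directly from the convolution formula. Set $d_i=\deg f_i$, $d=d_1+d_2$, $a_i=[q^i]f_1$, $b_j=[q^j]f_2$, $h_i=h(f_i)$, and $m=\min(h_1,h_2)$. If $m=+\infty$ then both factors are palindromic, so is their product, and the claim is trivial; thus assume $m<\infty$. The coordinate-reversal bijection $(i,j)\leftrightarrow(d_1-i,d_2-j)$ between pairs summing to $k$ and pairs summing to $d-k$ yields the identity
\[
[q^{d-k}](f_1f_2)-[q^k](f_1f_2)=\sum_{i+j=k}\bigl(a_{d_1-i}b_{d_2-j}-a_ib_j\bigr).
\]
I will show this quantity vanishes for every $k<m$ and is strictly positive at $k=m$. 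Since $h(f_1f_2)$ is by definition the smallest index where strict inequality occurs, these two facts together force $h(f_1f_2)=m$.

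Vanishing for $k<m$ is essentially formal: any $(i,j)$ with $i+j=k<m$ satisfies $i<h_1$ and $j<h_2$, so the definition of $h_1,h_2$ gives $a_i=a_{d_1-i}$ and $b_j=b_{d_2-j}$, killing the summand.

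For strict positivity at $k=m$, assume $h_1=m$ (relabelling $f_1\leftrightarrow f_2$ otherwise). For each $i<m$ in the sum, $a_i=a_{d_1-i}$ and $j=m-i$ satisfies $j\leq m\leq h_2\leq d_2/2$, so top-heaviness of $f_2$ yields $b_{d_2-j}\geq b_j$ and the summand $a_i(b_{d_2-j}-b_j)\geq 0$. The decisive contribution is the $(i,j)=(m,0)$ term, which equals
\[
(a_{d_1-m}-a_m)\,b_{d_2}+a_m(b_{d_2}-b_0).
\]
Since $h_1=m$ forces the integer gap $a_{d_1-m}-a_m\geq 1$, $b_{d_2}\geq 1$ is the leading coefficient of $f_2$, and $b_{d_2}\geq b_0$ by top-heaviness, this single term is at least $1$, so the total sum is strictly positive.

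I do not expect a real obstacle. The bijection identity localizes the comparison to a convolution; the bounds $m\leq h_i\leq d_i/2$ keep every relevant index in the ``good'' half where top-heaviness applies; and the leading-coefficient integer inequality handles $k=m$. One subtlety worth noting is that I do \emph{not} need $f_1f_2$ to itself be top-heavy -- which can fail in general -- since $h$ is defined as a minimum of strict inequalities regardless of the global shape of the polynomial.
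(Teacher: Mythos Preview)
Your proof is correct; the paper omits the argument entirely, calling it ``a simple exercise,'' and your convolution computation is exactly the kind of direct verification intended. One small expositional point: the chain $j \leq m \leq h_2 \leq d_2/2$ is not literally valid when $h_2=+\infty$, but in that case $f_2$ is palindromic and $b_{d_2-j}=b_j$ holds for every $j$, so your desired inequality $b_{d_2-j}\geq b_j$ is still true and the rest of the argument goes through unchanged.
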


\noindent Note that the product of top-heavy polynomials need not be top-heavy.

We will apply Lemma~\ref{lem:min-of-h} to $L^{J}(w^J)$ and $L(w_J)$, with comparison to $L(w)$. 

\begin{lemma}\label{lem:typeA-extremal-case}
For $n\geq4$, consider $w\in \S_n$ where $w(1)=n-1$, $w(2)=n$, $w(n-1)=1$, $w(n)=2$ and $w(i)=n-i+1$ for $3\leq i\leq n-2$. Then $h(w)=n-3$. 
\end{lemma}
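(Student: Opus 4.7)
The plan is to exhibit an explicit BP-decomposition of $w$ whose fiber is a full flag variety (hence smooth), and then to compute the Poincar\'e polynomial of the base by hand.

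Set $J := \{s_2, s_3, \ldots, s_{n-2}\}$, so that $W_J \cong \S_{n-2}$ permutes positions $2, \ldots, n-1$ and $W^J = \{y \in \S_n \mid y(2) < y(3) < \cdots < y(n-1)\}$ is parametrized by the pair $(y(1), y(n))$. Reading off cosets, one finds
\[w^J = (n-1,\,1,\,3,\,4,\,\ldots,\,n-2,\,n,\,2), \qquad w_J = (1,\,n-1,\,n-2,\,\ldots,\,3,\,2,\,n) = w_0(J).\]
The $4$-cycle $1 \mapsto n-1 \mapsto n \mapsto 2 \mapsto 1$ of $w^J$ prevents it from sitting in any maximal standard parabolic, so $\supp(w^J) = S$; combined with $D_L(w_J) = J$ (since $w_J = w_0(J)$), this shows that $w = w^J w_J$ is a BP-decomposition, and hence $L(w) = L^J(w^J) \cdot L(w_J)$ by Theorem~\ref{thm:bp-decomposition-fiber-bundle}.

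Because $X_{w_J}$ is the (smooth) full flag variety of type $A_{n-3}$, the factor $L(w_J)$ is palindromic and $h(L(w_J)) = +\infty$. Combining Theorem~\ref{thm:bjorner-ekedahl} with Lemma~\ref{lem:min-of-h} therefore reduces the problem to computing $h(L^J(w^J))$. Using the tableau criterion, one checks that $[e, w^J]^J$ consists precisely of those $y \in W^J$ for which $(a,b) := (y(1), y(n))$ satisfies $a \leq n-1$ and $b \geq 2$; for such $y$ a direct inversion count yields
\[\ell(y) = \begin{cases} n + a - b - 1, & a < b, \\ n + a - b - 2, & a > b. \end{cases}\]
Grouping contributions by $k = a-b$ collapses the sum to
\[L^J(w^J) \;=\; \sum_{j=0}^{n-2} (j+1)\, q^j \;+\; \sum_{j=n-1}^{2n-5} (2n - 4 - j)\, q^j.\]

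For $0 \leq i \leq n-4$, the coefficients satisfy $[q^i] L^J(w^J) = i + 1 = [q^{(2n-5)-i}] L^J(w^J)$, so palindromicity holds in those degrees; at $i = n-3$, however, $[q^{n-3}] = n-2 < n-1 = [q^{n-2}]$, giving the first failure. Hence $h(L^J(w^J)) = n - 3$, and therefore $h(w) = n - 3$. The only step requiring any care is the tableau-criterion verification of the interval $[e, w^J]^J$: the $k = 1$ and $k = n-1$ slices of the criterion immediately force $a \leq n-1$ and $b \geq 2$ respectively, and the intermediate slices are routine given the simple structure of $w^J$ (one extreme value in position $1$, the remaining values in increasing order, $2$ at the end).
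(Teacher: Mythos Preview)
Your proof is correct and follows essentially the same route as the paper: both use the BP-decomposition with $J=\{s_2,\ldots,s_{n-2}\}$, note that $w_J=w_0(J)$ gives a palindromic factor, characterize $[e,w^J]^J$ by the conditions $y(1)\leq n-1$, $y(n)\geq 2$, and read off $h(L^J(w^J))=n-3$ from the resulting rank-generating function. The only differences are cosmetic---you spell out $w^J$, the support check, and the tableau criterion a bit more explicitly than the paper does.
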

\begin{proof}
Let $J=\{2,3,\ldots,n-2\}$ so that $w_J=w_0(J)$. The parabolic decomposition $w=w^Jw_J$ is a Billey--Postnikov decomposition. Moreover, $L(w_J)=L(w_0(J))$ is palindromic, since $X_{w_0(J)}$ is a flag variety and therefore smooth. Every $u\in W^J$ satisfies $u(2)<u(3)<\cdots<u(n{-}1)$ so by counting inversions with $u(1)$ and $u(n)$, we deduce that $\ell(u)=(u(1)-1)+(n-u(n))-\boldsymbol{1}_{u(1)>u(n)}$, where $\boldsymbol{1}_{u(1)>u(n)}$ equals $1$ if $u(1)>u(n)$ and equals $0$ if $u(1)<u(n)$. Elements $u\in[e,w^J]^J$ are characterized by $u(1)\leq n-1$ and $u(n)\geq2$ with $u(2)<\cdots<u(n{-}1)$. With this criterion and the length formula, we are now able to directly count the rank sizes of $[e,w^J]^J$, starting at rank $0$ and ending at rank $2n-5$ to be \[1,2,3,\ldots,n-4,n-3,n-2,n-1,n-3,n-4,\ldots,2,1.\]
Thus, $h(L^J(w^J))=n-3$ and by Lemma~\ref{lem:min-of-h}, \[h(w)=h(L(w))=\min(h(L^J(w^J)),h(L(w_J)))=\min(n-3,\infty)=n-3.\]
\end{proof}

Recall that for an occurrence of a $3412$ in $w$ at indices $a<b<c<d$ with $w(c)<w(d)<w(a)<w(b)$, its \emph{height} is $w(a)-w(d)$. We define the \emph{content} of such an occurrence to be \[1+\left|\{i\:|\: b<i<c, w(d)<w(i)<w(a)\}\right|.\]
Let $\mcontent(w)$ be the minimum content of a $3412$ pattern in $w$.
\begin{lemma}
For $w\in\S_n$ that contains $3412$, $\mh(w)=\mcontent(w)$.
\end{lemma}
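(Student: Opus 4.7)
\medskip

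The plan is to prove the two inequalities $\mcontent(w)\le \mh(w)$ and $\mh(w)\le \mcontent(w)$ separately, where the first is immediate and the second is proved by an exchange argument that iteratively shrinks the height of a minimum-content occurrence until it agrees with its content.

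For the easy direction, fix any occurrence $P=(a,b,c,d)$ of $3412$ in $w$. By definition, the content of $P$ counts (plus one) the values in the open interval $(w(d),w(a))$ that appear at positions in $(b,c)$. Since there are only $w(a)-w(d)-1$ values in $(w(d),w(a))$ in total, the content of $P$ is at most $w(a)-w(d)=h(P)$. Taking the minimum over all occurrences gives $\mcontent(w)\le \mh(w)$.

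For the reverse inequality, choose an occurrence $P^\ast=(a,b,c,d)$ achieving $c(P^\ast)=\mcontent(w)=:k$. If $h(P^\ast)=k$ we are done, so suppose $h(P^\ast)>k$. Then by pigeonhole there is a value $v$ with $w(d)<v<w(a)$ whose position $p$ lies outside the open interval $(b,c)$. I claim we can swap one endpoint of $P^\ast$ for $p$ to produce a new $3412$ occurrence $P'$ with $h(P')<h(P^\ast)$ and $c(P')\le k$. Indeed, if $p<b$ then $p<b<c<d$ and the values $v,w(b),w(c),w(d)$ satisfy $w(c)<w(d)<v<w(a)<w(b)$, so $P'=(p,b,c,d)$ is a $3412$ pattern of height $v-w(d)<h(P^\ast)$; its content is $1+|\{i:b<i<c,\ w(d)<w(i)<v\}|$, which is at most $c(P^\ast)$ since $v<w(a)$. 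Symmetrically, if $p>c$ then $P'=(a,b,c,p)$ is a $3412$ pattern of height $w(a)-v<h(P^\ast)$ with content $1+|\{i:b<i<c,\ v<w(i)<w(a)\}|\le c(P^\ast)$.

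Now iterate the construction. At each step the height strictly decreases while the content stays in the interval $[\mcontent(w),k]=\{k\}$, so in fact the content remains exactly $k$ throughout the process; since height is always at least content, the decreasing sequence of heights must terminate at $h=c=k$. This produces a $3412$ occurrence $P$ with $h(P)=k=\mcontent(w)$, proving $\mh(w)\le\mcontent(w)$. The only subtlety in the argument is the case analysis showing that the swap always yields a genuine $3412$ pattern and that its content does not exceed the minimum; the pigeonhole step itself and the termination of the iteration are immediate.
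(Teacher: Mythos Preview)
Your proof is correct and uses essentially the same key observation as the paper: a value strictly between $w(d)$ and $w(a)$ whose position lies outside $(b,c)$ lets you replace one endpoint to obtain a new $3412$ occurrence of strictly smaller height and no larger content. The only difference is packaging---the paper invokes the extremal principle (among minimum-content occurrences choose one of minimum height, then a single application of the swap gives a contradiction), whereas you iterate the swap until the height reaches the content; these are the same argument.
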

\begin{proof}
Since the content of each occurrence is bounded above by the height, we have $\mcontent(w)\leq \mh(w)$. Now suppose that $\mcontent(w)=k$ and choose an occurrence of $3412$ of content $k$ in $w$ at indices $a<b<c<d$, $w(c)<w(d)<w(a)<w(b)$ such that $w(a)-w(d)$ is minimized. For any $i$ such that $w(d)<w(i)<w(a)$, if $i<b$ the occurrence of $3412$ at indices $i<b<c<d$ (of content at most $k$) has a smaller value of $w(i)-w(d)$, contradicting the minimality of $w(a)-w(d)$. Thus, for any $i$ such that $w(d)<w(i)<w(a)$, we have $i>b$, and, likewise, $i<c$. As a result, for this particular $3412$, its height equals its content $k$. So $\mh(w)\leq k=\mcontent(w)$ as desired. 
\end{proof}
One advantage of working with content instead of height, is that we evidently have $\mcontent(w)=\mcontent(w^{-1})$.
\begin{lemma}\label{lem:typeA-3412-upper}
Suppose that $w\in \S_n$ avoids $4231$ and contains $3412$. Then $h(w)\leq \mh(w)$. 
\end{lemma}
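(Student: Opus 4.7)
The plan is to induct on $n$, using parabolic decomposition, the inversion symmetry $h(w)=h(w^{-1})$ from Proposition~\ref{prop:kl-of-inverse}, and the $4231$-avoidance hypothesis to reduce to the extremal element of Lemma~\ref{lem:typeA-extremal-case}. I fix a $3412$-occurrence in $w$ at positions $a<b<c<d$ realizing $\mh(w)=\mcontent(w)=k$; the minimality of content forces all $k-1$ values strictly between $w(d)$ and $w(a)$ to lie at positions strictly between $b$ and $c$. Write $P'$ for this set of $k-1$ interior positions.

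The first reductions handle the situation in which the pattern does not span all of $[1,n]$. If $a>1$, the parabolic $J=S\setminus\{s_{a-1}\}$ gives $W_J\cong \S_{a-1}\times \S_{n-a+1}$, and the flattening $v\in \S_{n-a+1}$ of $w|_{\{a,\ldots,n\}}$ retains the $3412$-occurrence with height still $k$ (all interior values stay in the block) and still avoids $4231$. By Lemma~\ref{lem:min-of-h}, Theorem~\ref{thm:h-increases-in-parabolic}, and Theorem~\ref{thm:bjorner-ekedahl}, $h(w)\leq h(w_J)\leq h(v)$, and induction on $n$ bounds $h(v)\leq\mh(v)\leq k$. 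The case $d<n$ is symmetric with $J=S\setminus\{s_d\}$. If $a=1$ and $d=n$ but $w(c)>1$ or $w(b)<n$, I pass to $w^{-1}$: by Proposition~\ref{prop:kl-of-inverse} and the identity $\mcontent(w)=\mcontent(w^{-1})$, both sides of the desired inequality are preserved, and the corresponding pattern in $w^{-1}$ sits at $w(c)<w(d)<w(a)<w(b)$, which no longer spans $[1,n]$, so the previous reduction applies.

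The critical case is $a=1$, $d=n$, $w(c)=1$, $w(b)=n$. When $n=k+3$, the pattern occupies all positions and values, forcing $b=2$, $c=n-1$, $w(1)=k+2$, $w(n)=2$, and the middle values $w(3),\ldots,w(n-2)$ to be a permutation of $\{3,\ldots,k+1\}$; any ascent there would combine with $w(1)=k+2$ and $w(n-1)=1$ to produce a $4231$, so $4231$-avoidance forces this middle to be strictly decreasing. Hence $w$ is precisely the extremal element of Lemma~\ref{lem:typeA-extremal-case}, giving $h(w)=k$. When $n>k+3$, an ``extra'' position $i$ exists outside $\{1,b,c,n\}\cup P'$, and $4231$-avoidance restricts its location: extras in $(1,b)$ must have $w(i)>w(a)$, those in $(c,n)$ must have $w(i)<w(d)$, and those in $(b,c)\setminus P'$ split into ones with $w(i)>w(a)$ lying to the left of all of $P'$ and ones with $w(i)<w(d)$ lying to the right. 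In each configuration an alternative $3412$-occurrence of height $k$ and content at most $k$ can be produced---for instance $(1,i,c,n)$ when $w(i)>w(a)$ and $i<c$, or $(1,b,i,n)$ when $w(i)<w(d)$ and $i>b$---and this occurrence breaks at least one of the four extremal conditions, so an earlier reduction applies.

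The main obstacle will be this last classification: one must use $4231$-avoidance carefully to constrain the location and value of every extra position, and then verify that the alternative $3412$-occurrence produced really has content at most $k$, so that the inductive hypothesis remains applicable after the parabolic and inversion reductions.
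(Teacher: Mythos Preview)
Your argument has a genuine gap when $\mh(w)=k=1$. The claim that $4231$-avoidance forces extras in $(c,n)$ to satisfy $w(i)<w(d)$ (and symmetrically for extras in $(1,b)$) is established by pairing the extra with some $p\in P'$ to produce a $4231$ pattern at positions $b<p<i<n$; when $k=1$ the set $P'$ is empty and no such $p$ exists. A concrete counterexample is $w=35142\in\S_5$: it avoids $4231$, its unique $3412$ occurrence sits at positions $1<2<3<5$ with $w(2)=5$, $w(3)=1$, and height $1$, so all four extremal conditions hold, yet the lone extra position $4\in(c,n)$ has $w(4)=4>w(a)=3$. There is no alternative content-$1$ occurrence of $3412$ in $w$, and since $w=w^{-1}$ the inversion trick does not help either, so your reduction stalls. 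The repair is immediate: for $k=1$ invoke Proposition~\ref{prop:h-equals-1}(iv) directly to get $h(w)=1$, as the paper does.

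For $k\geq 2$ your argument is correct and runs parallel to the paper's. The paper organizes the endgame slightly differently: rather than manufacturing an alternative occurrence and cycling back through the earlier reductions, it argues that once every content-$k$ occurrence is forced to use position $1$, position $n$, value $1$, and value $n$, all six off-diagonal regions of the $3\times 3$ permutation diagram are empty; then for $k\geq 2$ (so the middle region $B$ is nonempty) $4231$-avoidance forces the corner regions $A$ and $C$ to be empty as well, leaving precisely the extremal permutation of Lemma~\ref{lem:typeA-extremal-case}. Your alternative-occurrence construction is essentially the contrapositive of these region-emptiness statements, so once the $k=1$ case is patched the two proofs coincide.
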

\begin{proof}
We use induction on $n$. The statement is true when $n=4$, where $h(3412)=\mh(3412)=1$. For a general $n$ and $w\in\S_n$, let $k=\mh(w)=\mcontent(w)$. For $J=\{2,3,\ldots,n-1\}$, if $w_J\in\S_{n-1}$ has $\mcontent(w_J)=k$, then we are done by the induction hypothesis and Theorem~\ref{thm:h-increases-in-parabolic} which says $h(w)\leq h(w_J)\leq \mcontent(w_J)=k$. We can thus assume without loss of generality that the index $1$ appears in all $3412$'s of $w$ with content $k$. Similarly by considering $J=\{1,2,\ldots,n-2\}$, we can also assume that the index $n$ appears in all $3412$'s of $w$ with content $k$. As $h(w)=h(w^{-1})$, with the same argument on $w^{-1}$, we can reduce to the case that $w$ contains a unique $3412$ of content $k$ on indices $1<w^{-1}(n)<w^{-1}(1)<n$ (see Figure~\ref{fig:3412-upper-bound}).
\begin{figure}[h!]
\centering
\begin{tikzpicture}[scale=0.8]
\draw[step=1.0] (0,0) grid (3,3);
\node at (0,2) {$\bullet$};
\node at (1,3) {$\bullet$};
\node at (2,0) {$\bullet$};
\node at (3,1) {$\bullet$};
\node at (0.5,0.5) {$A$};
\node at (1.5,1.5) {$B$};
\node at (2.5,2.5) {$C$};
\node at (0.5,1.5) {$\emptyset$};
\node at (0.5,2.5) {$\emptyset$};
\node at (1.5,0.5) {$\emptyset$};
\node at (1.5,2.5) {$\emptyset$};
\node at (2.5,0.5) {$\emptyset$};
\node at (2.5,1.5) {$\emptyset$};
\end{tikzpicture}
\caption{The permutation diagram for $w$ with an occurrence of $3412$ on the boundary. Throughout the paper, we draw permutation diagrams by putting $\bullet$'s at Cartesian coordinates $(i,w(i))$.}
\label{fig:3412-upper-bound}
\end{figure}
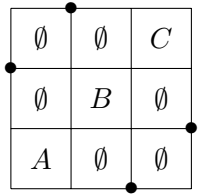
As we assume that $w_J$ does not contain a $3412$ of content $k$, there does not exist $i$ such that $1<i<w^{-1}(n)$ with $w(i)>w(n)$. By symmetry, we know six of the regions in Figure~\ref{fig:3412-upper-bound} are empty as shown, and label the other three regions as $A,B,C$. By definition, $|B|=k-1$. If $k=1$, then $h(w)=1$ by Proposition~\ref{prop:h-equals-1}. If $k>1$, $B$ is not empty; since $w$ avoids $4231$, $A$ and $C$ must be empty. Finally, in this case the entries of $B$ must be decreasing, since $w$ avoids $4231$. Thus $w$ is exactly the permutation in Lemma~\ref{lem:typeA-extremal-case}, which gives $h(w)=n-3=k$ as desired. 
\end{proof}

\subsection{Extension to Type $D$}

\begin{prop}
\label{prop:type-D-4231-upper-bound}
If $w \in \D_n$ contains $4231$, then $h(w) \leq 2$.
\end{prop}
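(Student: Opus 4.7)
The plan is to follow the strategy of Proposition~\ref{prop:h-equals-1}(i). Suppose $w \in \D_n$ contains $4231$ at signed positions $a < b < c < d$ in $\pm[n]$, so $w(d) < w(b) < w(c) < w(a)$. The identity
\[(e_b - e_a) + (e_d - e_b) = (e_c - e_a) + (e_d - e_c)\]
is an equality of positive roots in $\Phi(D_n)$: a short case check on the signs of $a, b, c, d$ shows that each of the four summands, as well as $e_d - e_a$, lies in $\inv(w)$, while $e_c - e_b$ is a positive root that does not lie in $\inv(w)$. Applying Lemma~\ref{lem:split-inversion-all} to each of the four inversions produces a relation $\sum_i \beta_i^L = \sum_j \beta_j^R$ among labels of Bruhat covers below $w$; the task is to show this relation is nontrivial and then invoke Lemma~\ref{lem:relation-corank1}.

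When $a, b, c, d$ are all of the same sign, each of the four summands is of difference type $e_q - e_p$ with $1 \leq p < q \leq n$. A coefficient-sum comparison (sum-type roots $e_q + e_p$ have coefficient sum $2$, while $e_q - e_p$ has sum $0$) forces every positive-root decomposition of a difference-type root to use only difference-type summands; hence in this subcase the splits from Lemma~\ref{lem:split-inversion-all} produce only difference-type labels, and the Type $A$ argument of Proposition~\ref{prop:h-equals-1}(i) transfers verbatim, yielding $h(w) = 1$.

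The new phenomenon occurs when the signs of $a, b, c, d$ are mixed, so that some of the four summands are of sum type and the corresponding splits may use sum-type labels. Using the diagram automorphism $\e_D$ (Proposition~\ref{prop:auto-preserves-h}) and the inversion $w \mapsto w^{-1}$ (Proposition~\ref{prop:kl-of-inverse}) to cut down the number of configurations, the analysis reduces to a small case check on where the sign flip occurs within $a < b < c < d$. In each configuration, the biconvexity of $\inv(w)$ — the fact that any sum of inversions which is itself a positive root lies in $\inv(w)$ — combined with $e_c - e_b \notin \inv(w)$, still forbids the ``chain'' on the left-hand side from passing through $c$ and the one on the right-hand side from passing through $b$, producing a nontrivial label relation and so $h(w) = 1$ by Lemma~\ref{lem:relation-corank1}. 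The main obstacle will be this signed case analysis, keeping careful track of how sum-type and difference-type labels can interact in the splits; the statement of the proposition is the weaker bound $h(w) \leq 2$ because in certain boundary sign configurations the best label relation extractable has corank $2$ rather than corank $1$, in which case one concludes directly via the inequality $[q^2] L(w) < [q^{\ell(w)-2}] L(w)$ and Theorem~\ref{thm:bjorner-ekedahl}.
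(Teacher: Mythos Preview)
Your opening and the all-same-sign subcase are fine: the coefficient-sum observation does force all splits to stay within difference-type roots, so the Type~$A$ argument of Proposition~\ref{prop:h-equals-1}(i) goes through unchanged and gives $h(w)=1$ there.

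The gap is in the mixed-sign case. Your claim that $e_c-e_b\notin\inv(w)$ ``still forbids the chain on the left-hand side from passing through $c$'' is correct for the index $c$ itself, but in Type~$D$ the chain from $b$ to $d$ (or from $a$ to $b$) may pass through $\bar c$, and likewise the right-hand chain may pass through $\bar b$. When that happens the relation \emph{can} collapse to a triviality. The paper carries out exactly this analysis: triviality forces $w$ to contain one of the eight signed patterns $\pm2,\pm1,\bar3,\bar4$ or $\pm2,\pm1,\bar4,\bar3$. For the $\bar4\bar3$ family a different, longer root identity (involving six inversions rather than four) still yields a nontrivial cover-label relation and hence $h(w)=1$. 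But for the $\bar3\bar4$ family no such relation is available, and the argument switches tracks entirely: one reduces by induction and the avoidance hypotheses of Proposition~\ref{prop:h-equals-1} to a very constrained $w$ with $w(n)=\bar n$ and $w(n{-}1)=\overline{n{-}1}$, then compares $L(w)$ to $L^J(w^J)L(w_J)$ for $J=\{0,\dots,n{-}2\}$ and exhibits an explicit element of length $\ell(w)-2$ lying in $[e,w]\setminus[e,w^J]^J\cdot[e,w_J]$ while showing nothing of length $\le 2$ lies there. This Poincar\'e-polynomial bookkeeping is what produces $h(w)\le 2$.

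Your proposed fallback, that ``the best label relation extractable has corank~$2$,'' does not do this work. A corank-$2$ dependency among cover labels is \emph{stronger} than a corank-$1$ dependency and, via Lemma~\ref{lem:relation-corank1}, would again only yield $h(w)=1$; it says nothing about the coefficient of $q^2$ in $L(w)$. There is no lemma in the paper (nor any obvious mechanism) converting a second linear relation among cover labels into the inequality $[q^2]L(w)<[q^{\ell(w)-2}]L(w)$. The genuine $h(w)\le 2$ cases require the fiber-bundle/Poincar\'e-polynomial argument, which is missing from your outline.
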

\begin{proof}
We will adapt the strategy for Proposition~\ref{prop:h-equals-1} to show that for most occurrences of $4231$, we in fact have $h(w)=1$. For the few remaining cases, we apply a different argument. 

Assume that $w$ contains $4231$ at indices $a<b<c<d$. Recall that, by our conventions, this means in particular that $|a|,|b|,|c|,|d|$ are distinct, as are $|w(a)|,|w(b)|,|w(c)|,|w(d)|$. We have the following equality among roots in $\inv(w):$ 
\begin{equation}\label{eq:root-h-2}
(e_d-e_b)+(e_b-e_a)=(e_d-e_c)+(e_c-e_a),
\end{equation}
where $e_{\bar i}=-e_i$. In this argument, we write all the roots in the form $e_{i}-e_{j}$, where the subscript can be positive or negative. By Lemma~\ref{lem:split-inversion-all}, we can split each of these four roots into a sum of labels below $w$. If we end up with a nontrivial relation, we can conclude that $h(w)=1$ by Lemma~\ref{lem:relation-corank1}. We now analyze when Equation~\eqref{eq:root-h-2} becomes trivial and deal with these situations. As $w(c)>w(b)>w(d)$, $c$ cannot appear as an endpoint of a root after we split $e_d-e_b=(e_d-e_i)+\cdots+(e_j-e_a)$, and as $c>b>a$, $c$ cannot appear in the expansion of $e_b-e_a$ either. Thus, we need $\bar c$ to appear on the left hand side of Equation~\eqref{eq:root-h-2}, noticing that $e_i-e_j=e_{\bar j}-e_{\bar i}$. Likewise, we need $\bar b$ to appear on the right hand side. There are two cases.

Case 1: $\bar c$ appears in $e_b-e_a$. This implies that $a<\bar c<b$ and $w(b)<w(\bar c)<w(a)$. As $a<\bar c<b<c<d$, we have $c>0$ and $c>a,-d,|b|$. Also, $w(a)>w(c)>w(b)>w(d)$, which gives \[w(a)>|w(c)|>0>-|w(c)|>w(b)>w(d).\] Simultaneously, $\bar b$ needs to appear on the right hand side of Equation~\eqref{eq:root-h-2}. As $\bar b<c<d$, we know $\bar b$ does not appear in the expansion of $e_d-e_c$, so it must appear in $e_c-e_a$, which implies $w(a)>w(\bar b)>w(c)$ so $w(a),w(\bar d)>w(\bar b)>|w(c)|$. The relative orderings of $|a|,|b|,|c|,|d|$ and of $|w(a)|,|w(b)|,|w(c)|,|w(d)|$ let us conclude that $w$ contains one of the following $8$ patterns: \[\pm2,\pm1,\bar 3,\bar4\text{  and 
 }\pm2,\pm1,\bar4,\bar3.\]

Case 2: $\bar c$ appears in $e_d-e_b$. This implies that $b<\bar c<d$ so $b<-|c|<0<|c|<d$, and that $w(c)>w(b)>w(\bar c)>w(d)$ so $w(c)>0$. As $\bar b>c>a$, we know $\bar b$ cannot appear in $e_c-e_a$ and must appear in $e_d-e_c$, which implies $d>\bar b>c>b$. Together, we have as above that $\bar a, d>\bar b,c>0$ and $w(a),w(\bar d)>w(c),|w(b)|>0$. We also conclude that $w$ contains one of $\pm2,\pm1,\bar3,\bar4$ and $\pm2,\pm1,\bar4,\bar3$.

We establish the following claim first before diving into these $8$ patterns.
\begin{claim}\label{claim:greater-than-rank2}
Let $w\in\D_n$ satisfy $w(n)=\bar n$. Then $w\geq u$ for any $u\in\D_n$ of length $2$.
\end{claim}
\begin{proof}
Let $J=\{0,1,\ldots,n-2\}\subset S$, then \[w^J=\pm123\cdots(n{-}1)\bar n=s_{n-1}s_{n-2}\cdots s_3s_2s_0s_1s_2s_3\cdots s_{n-1},\] which contains as a subword every group element $u=s_is_j$ of length $2$. So $w\geq w^J\geq u$ by Theorem~\ref{thm:subword-bruhat}.
\end{proof}
We now use induction on $n$ to show that if $w\in\D_n$ contains $4231$, then $h(w)\leq 2$. The base case $n=3$ holds vacuously. For a general $n$, by the induction hypothesis, we can assume that $w_J$ and $_Jw$ do not contain $4231$ for $J\subsetneq S$. We may also assume that $w$ avoids the patterns in Proposition~\ref{prop:h-equals-1}, otherwise $h(w)=1$ and we are done. In particular, one of $(n,w(n)),(\bar n,w(\bar n))$ and one of $(w^{-1}(n),n),(w^{-1}(\bar n),\bar n)$ must be used in the patterns $\pm2,\pm1,\bar3,\bar4$ and $\pm2,\pm1,\bar4,\bar3$ of interest.

Case 1: $w$ contains $\pm2,\pm1,\bar4,\bar3$ at indices $0<a<b<c<d$. We can deal with these four patterns using the same argument, but for simplicity we will only consider the most representative case of $21\bar4\bar3$. We apply the diagram analysis as shown in Figure~\ref{fig:21bar4bar3}, where the regions marked $\emptyset$ are empty because $w_J,_Jw$ avoid $4231$ for $J=\{0,1,\ldots,n-2\}$. 
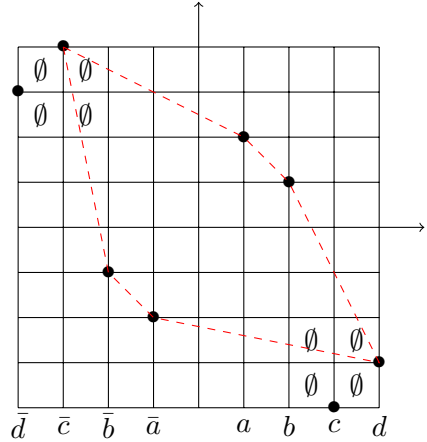
\begin{figure}[h!]
\centering
\begin{tikzpicture}[scale=0.6]
\draw[step=1.0] (-4,-4) grid (4,4);
\node at (1,2) {$\bullet$};
\node at (2,1) {$\bullet$};
\node at (3,-4) {$\bullet$};
\node at (4,-3) {$\bullet$};
\node at (-1,-2) {$\bullet$};
\node at (-2,-1) {$\bullet$};
\node at (-3,4) {$\bullet$};
\node at (-4,3) {$\bullet$};
\node at (1,-4.4) {$a$};
\node at (2,-4.4) {$b$};
\node at (3,-4.4) {$c$};
\node at (4,-4.4) {$d$};
\node at (-1,-4.4) {$\bar a$};
\node at (-2,-4.4) {$\bar b$};
\node at (-3,-4.4) {$\bar c$};
\node at (-4,-4.4) {$\bar d$};
\draw[->](4,0)--(5,0);
\draw[->](0,4)--(0,5);
\node at (2.5,-2.5) {$\emptyset$};
\node at (2.5,-3.5) {$\emptyset$};
\node at (3.5,-2.5) {$\emptyset$};
\node at (3.5,-3.5) {$\emptyset$};
\node at (-2.5,2.5) {$\emptyset$};
\node at (-2.5,3.5) {$\emptyset$};
\node at (-3.5,2.5) {$\emptyset$};
\node at (-3.5,3.5) {$\emptyset$};
\draw[dashed, red](-3,4)--(-2,-1)--(-1,-2)--(4,-3)--(2,1)--(1,2)--(-3,4);
\end{tikzpicture}
\caption{Diagram analysis for $21\bar4\bar3$}
\label{fig:21bar4bar3}
\end{figure}
Let us now consider \[(e_d-e_{\bar a})+(e_{\bar a}-e_{\bar b})+(e_{\bar b}-e_{\bar c})=(e_d-e_{b})+(e_{b}-e_a)+(e_a-e_{\bar c})\] that corresponds to the red dashed line in Figure~\ref{fig:21bar4bar3},
and split these roots according to Lemma~\ref{lem:split-inversion-all}. On the left hand side, there is a term $e_k-e_{\bar c}$ for $\bar c<k\leq\bar b$ where $w\gtrdot wr_{e_k-e_{\bar c}}$. However, on the right hand side, $c$ or $\bar c$ can only appear in the form of $e_l-e_{\bar c}$ for $l>\bar b$. As a result, we obtain a nontrivial linear relation among the positive roots that label covers below $w$. Lemma~\ref{lem:relation-corank1} implies that $h(w)=1$.

We do note that containing $\pm2,\pm1,\bar4,\bar3$ does not imply $h(w)=1$. The condition of the ``minimality" of such an occurrence does.

Case 2: $w$ contains $\pm2,\pm1,\bar3,\bar4$ at indices $0<a<b<c<d$. Again, the four patterns here can be dealt with the same argument so for simplicity, we consider the most representative case $21\bar3\bar4$. Consider Figure~\ref{fig:21bar3bar4}; the regions marked $\emptyset$ are empty because:
\begin{itemize}
\item $\emptyset_1:$ $w_J,_Jw$ need to avoid $4231$ for $J$ of type $A_{n-1}$ and $D_{n-1}$.
\item $\emptyset_2$: $w$ avoids $\pm12\bar3$.
\end{itemize}
\begin{figure}[h!]
\centering
\begin{tikzpicture}[scale=0.6]
\draw[step=1.0] (-4,-4) grid (4,4);
\node at (1,2) {$\bullet$};
\node at (2,1) {$\bullet$};
\node at (3,-3) {$\bullet$};
\node at (4,-4) {$\bullet$};
\node at (-1,-2) {$\bullet$};
\node at (-2,-1) {$\bullet$};
\node at (-3,3) {$\bullet$};
\node at (-4,4) {$\bullet$};
\draw[->](4,0)--(5,0);
\draw[->](0,4)--(0,5);
\node at (1,-4.4) {$a$};
\node at (2,-4.4) {$b$};
\node at (3,-4.4) {$c$};
\node at (4,-4.4) {$d$};
\node at (-1,-4.4) {$\bar a$};
\node at (-2,-4.4) {$\bar b$};
\node at (-3,-4.4) {$\bar c$};
\node at (-4,-4.4) {$\bar d$};
\node at (1.5,0.5) {$\emptyset_1$};
\node at (1.5,-0.5) {$\emptyset_1$};
\node at (1.5,-1.5) {$\emptyset_1$};
\node at (1.5,-2.5) {$\emptyset_1$};
\node at (1.5,-3.5) {$\emptyset_1$};
\node at (2.5,-1.5) {$\emptyset_1$};
\node at (2.5,-2.5) {$\emptyset_1$};
\node at (2.5,-3.5) {$\emptyset_1$};
\node at (3.5,0.5) {$\emptyset_1$};
\node at (3.5,-0.5) {$\emptyset_1$};
\node at (3.5,-1.5) {$\emptyset_1$};
\node at (3.5,-2.5) {$\emptyset_1$};
\node at (3.5,-3.5) {$\emptyset_1$};
\node at (0.5,-1.5) {$\emptyset_1$};
\node at (-0.5,-1.5) {$\emptyset_1$};
\node at (0.5,-3.5) {$\emptyset_1$};
\node at (-0.5,-3.5) {$\emptyset_1$};

\node at (-1.5,-0.5) {$\emptyset_1$};
\node at (-1.5,0.5) {$\emptyset_1$};
\node at (-1.5,1.5) {$\emptyset_1$};
\node at (-1.5,2.5) {$\emptyset_1$};
\node at (-1.5,3.5) {$\emptyset_1$};
\node at (-2.5,1.5) {$\emptyset_1$};
\node at (-2.5,2.5) {$\emptyset_1$};
\node at (-2.5,3.5) {$\emptyset_1$};
\node at (-3.5,-0.5) {$\emptyset_1$};
\node at (-3.5,0.5) {$\emptyset_1$};
\node at (-3.5,1.5) {$\emptyset_1$};
\node at (-3.5,2.5) {$\emptyset_1$};
\node at (-3.5,3.5) {$\emptyset_1$};
\node at (-0.5,1.5) {$\emptyset_1$};
\node at (0.5,1.5) {$\emptyset_1$};
\node at (-0.5,3.5) {$\emptyset_1$};
\node at (0.5,3.5) {$\emptyset_1$};

\node at (1.5,2.5) {$\emptyset_2$};
\node at (1.5,3.5) {$\emptyset_2$};
\node at (2.5,1.5) {$\emptyset_2$};
\node at (2.5,2.5) {$\emptyset_2$};
\node at (2.5,3.5) {$\emptyset_2$};
\node at (3.5,1.5) {$\emptyset_2$};
\node at (3.5,2.5) {$\emptyset_2$};
\node at (3.5,3.5) {$\emptyset_2$};
\node at (-1.5,-2.5) {$\emptyset_2$};
\node at (-1.5,-3.5) {$\emptyset_2$};
\node at (-2.5,-1.5) {$\emptyset_2$};
\node at (-2.5,-2.5) {$\emptyset_2$};
\node at (-2.5,-3.5) {$\emptyset_2$};
\node at (-3.5,-1.5) {$\emptyset_2$};
\node at (-3.5,-2.5) {$\emptyset_2$};
\node at (-3.5,-3.5) {$\emptyset_2$};

\node at (-2.5,0.5) {$A$};
\node at (-2.5,-0.5) {$A$};
\node at (2.5,0.5) {$A$};
\node at (2.5,-0.5) {$A$};
\end{tikzpicture}
\caption{Diagram analysis for $21\bar3\bar4$}
\label{fig:21bar3bar4}
\end{figure}
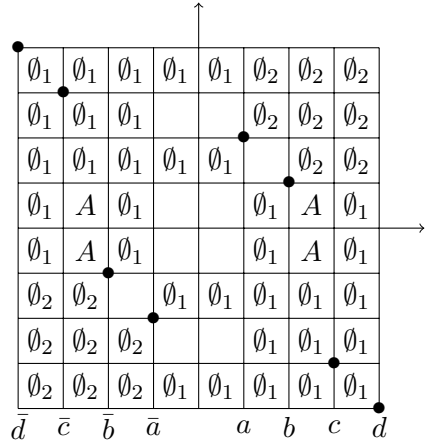
Let $J=\{0,1,2,\ldots,n-2\}\subset S$. If $h(w_J)\leq2$, then $h(w)\leq h(w_J)\leq 2$ and we are done, so assume $h(w_J)\geq3$. Recall that group multiplication is an injection $[e,w^J]^J\times [e,w_{J}]\hookrightarrow [e,w]$. Let \[U=[e,w]\setminus \left([e,w^J]^J\times[e,w_J]\right).\] In other words, $u\in U$ if and only if $u\leq w$ and $u_J\nleq w_J$. 

Since $w(n)=\bar n$, $L^J(w^J)=L^J(w_0^J)$ is palindromic, as the Poincar\'{e} polynomial of $G/P_J$, and thus $h(L^J(w^J)L(w_J))=h(w_J)\geq3$. For any $u\in U$, we have $u_J\not \leq w_J$, and as $w_J(n{-}1)=\overline{n{-}1}$, Claim~\ref{claim:greater-than-rank2} implies that $\ell(u_J)\geq3$, and thus $\ell(u)\geq3$. Now consider an explicit element $v=ws_{n-2}s_{n-1}$ with length $\ell(w)-2$. Note that the index $\pm(n{-}2)$ is either $\pm b$ or inside the region $A$. As $v(\bar n)=w(\overline{n{-}2})\leq n-4$, we have $\ell(v^J)\leq 2n-6$ and $\ell(v_J)=\ell(v)-\ell(v^J)\geq \ell(w)-2n+4$. At the same time, $\ell(w_J)=\ell(w)-\ell(w^J)=\ell(w)-2n+2$. Via this calculation on length, $v_J\nleq w_J$ so $v\in U$. Consider the Poincar\'{e} polynomial
\[L(w)=L^J(w^J)L(w_J)+\sum_{u\in U}q^{\ell(u)}.\]
We have shown that $[q^{i}]L^J(w^J)L(w_J)=[q^{\ell(w)-i}]L^J(w^J)L(w_J)$ for $i\leq 2$ and that there are no elements of length $2$ in $U$ but that there are elements of length $\ell(w)-2$ in $U$. Together this implies that $h(w)\leq 2$. 
\end{proof}

\begin{defin}
Define the \emph{magnitude} $\mg(w)$ as the smallest $b>0$ such that $w$ has an occurrence of $\pm 1 \bar{3} \bar{2}$ with values $a \bar{c} \bar{b}$.
\end{defin}

\begin{prop}
\label{prop:132-bound}
Suppose $w \in \D_n$ contains $\pm 1 \bar{3} \bar{2}$ and avoids $4231$, then $h(w) \leq \mg(w)-1$.
\end{prop}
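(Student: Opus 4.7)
My approach would parallel the strategy of Case 2 in the proof of Proposition~\ref{prop:type-D-4231-upper-bound}: combine parabolic reductions with a Poincar\'e polynomial comparison via Theorem~\ref{thm:bjorner-ekedahl}, using the injection $[e,w^J]^J\times[e,w_J]\hookrightarrow[e,w]$ and a careful analysis of the residual set.

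Set $b=\mg(w)$ and fix an occurrence of $\pm 1\,\bar{3}\,\bar{2}$ at positions $a'<b'<c'$ with values $a,-c,-b$, where $|a|<b<c$. Using Theorem~\ref{thm:h-increases-in-parabolic}, equation~\eqref{eq:h-increases-left-parabolic}, Proposition~\ref{prop:kl-of-inverse}, and Proposition~\ref{prop:auto-preserves-h}, I would induct on $n$ and reduce to a canonical configuration in which the occurrence is pushed to the corner: $c'=n$ (via $w_J$-induction) and $c=n$ (via $\prescript{}{J}{w}$-induction), so that $w(n)=-b$ and $w(n-1)=-n$, with the remaining positions holding a canonical increasing sequence in the spirit of Lemma~\ref{lem:typeA-extremal-case}. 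Each such reduction preserves both the avoidance of $4231$ and the value of the magnitude.

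For the canonical $w$, I would take the maximal parabolic $J=\{s_0,s_1,\ldots,s_{n-2}\}$ with $W_J\cong W(D_{n-1})$ fixing $\pm n$. The element $w^J$ is determined by $w(n)=-b$. Since the BP-condition (Definition~\ref{def:bp-decompositions}) generally fails for $b<n$, I would not invoke Theorem~\ref{thm:bp-decomposition-fiber-bundle} directly; rather, the injection yields
\[
L(w)=L^J(w^J)\,L(w_J)+\sum_{u\in U}q^{\ell(u)},\qquad U=[e,w]\setminus([e,w^J]^J\times[e,w_J]).
\]
I would then (i) compute the rank-size generating function of $[e,w^J]^J$ by an explicit enumeration of $W^J$ in type $D_n$, analogous to Lemma~\ref{lem:typeA-extremal-case}, pinning down the top-heavy defect of $L^J(w^J)$ at level $b-1$; (ii) combine with $L(w_J)$ via Lemma~\ref{lem:min-of-h}, noting that $L(w_J)$ should be top-heavy in the canonical configuration; and (iii) control the residual set $U$ by proving an analogue of Claim~\ref{claim:greater-than-rank2} showing $\ell(u)\geq b$ for every $u\in U$, while exhibiting a specific $v\in U$ of length $\ell(w)-(b-1)$ obtained by right-multiplying $w$ by a carefully chosen sequence of simple reflections. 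Assembling (i)--(iii) gives $[q^{b-1}]L(w)<[q^{\ell(w)-(b-1)}]L(w)$, so $h(w)\leq b-1$ by Theorem~\ref{thm:bjorner-ekedahl}.

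The main obstacle is the combinatorial bookkeeping in step (iii): proving that no element of $U$ has length smaller than $b$ requires structural properties of both $w^J$ and $w_J$ in the canonical configuration (analogous to how $w_J(n-1)=\overline{n-1}$ was used in Proposition~\ref{prop:type-D-4231-upper-bound}), and pinpointing the correct $v\in U$ producing asymmetry at rank $\ell(w)-(b-1)$ demands an explicit length count paralleling the $\ell(v^J)\leq 2n-6$ calculation there. A secondary difficulty lies in handling edge cases of the canonical-form reductions in Step 1---for instance, when multiple minimum-magnitude occurrences exist with different extremal positions, requiring combined parabolic/inverse/$\e_D$-automorphism arguments.
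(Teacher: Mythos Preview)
Your strategy diverges from the paper's in a way that makes the task harder than necessary. The paper does \emph{not} analyze a residual set $U$ here; instead it achieves a genuine BP-decomposition after reduction, so no residual set ever appears.

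Concretely, the paper fixes an occurrence of $\pm 1\bar 3\bar 2$ at positions $0<i<j<k$ with values $a\bar c\bar b$ realizing $\mg(w)=b$, choosing $c$ minimal among such. A diagram analysis (using $4231$-avoidance, minimality of $b$, and minimality of $c$) shows that all but one region of the picture is forced to be empty; the one surviving region $A$ between positions $i$ and $j$ contains only a decreasing sequence of size at most $b-2$. Applying the two parabolic flattenings $w\mapsto w_{J_1}$ with $J_1=\{0,\ldots,k-1\}$ and then $(\cdot)\mapsto\prescript{}{J_2}{(\cdot)}$ with $J_2=\{0,\ldots,c-1\}$ yields \emph{exactly} the element
\[
u=[\pm1,\bar2,\ldots,\overline{m{-}2},\bar m,\overline{m{-}1}]=w_0(\D_m)s_{m-1},\qquad m=3+|A|\leq b+1.
\]
For this specific $u$, the parabolic $K=\{0,\ldots,m-2\}$ gives a BP-decomposition with $u_K=w_0(K)$, so $L(u_K)$ is palindromic and $h(u)=h(L^K(u^K))$. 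Since $X^K(u^K)$ is the closure of the top cell minus a point in the $(2m-2)$-dimensional quadric $G/P_K$, one has $L^K(u^K)=(1+q+\cdots+q^{2m-3})+q^{m-1}$, whence $h(u)=m-2\leq b-1$.

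The two points your outline misses are: (a) the diagram analysis that forces the reduced element to be $w_0(\D_m)s_{m-1}$ on the nose, rather than merely ``$w(n)=-b$, $w(n-1)=-n$ plus something canonical''; and (b) the observation that this $u$ admits a BP-decomposition, eliminating any residual-set bookkeeping. Your proposed step~(iii)---bounding $\ell(u)$ from below for $u\in U$ and exhibiting a specific $v\in U$ near the top---is precisely the labor the paper sidesteps. Note also that your canonical form retains the parameter $b$ as a value of the permutation, whereas the paper's reduction lands in $\D_m$ with $m\leq b+1$ possibly strictly smaller; the quadric computation then gives $h(u)=m-2$, which may be strictly less than $b-1$. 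Your approach is not obviously unworkable, but the obstacles you flag in step~(iii) are real, and the paper's route via the quadric dissolves them.
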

\begin{proof}
Suppose that $w \in \D_n$ avoids $4231$ and contains $\pm 1 \bar{3} \bar{2}$. Suppose the occurrence of $\pm 1 \bar{3} \bar{2}$ realizing $\mg(w)$ is a $1 \bar{3} \bar{2}$ (the other case being exactly analogous); let this occurrence occupy positions $0<i<j<k$ with values $a \bar{c} \bar{b}$ where $0<a<b<c$, so $\mg(w)=b$. Furthermore, among such occurrences, assume we have chosen one with $c$ minimal.
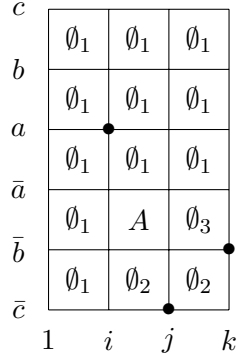
\begin{figure}[h!]
\centering
\begin{tikzpicture}[scale=0.8]
\draw[step=1.0] (0,0) grid (3,5);
\node at (-0.5,0) {$\bar{c}$};
\node at (-0.5,1) {$\bar{b}$};
\node at (-0.5,2) {$\bar{a}$};
\node at (-0.5,3) {$a$};
\node at (-0.5,4) {$b$};
\node at (-0.5,5) {$c$};
\node at (0,-0.5) {$1$};
\node at (1,-0.5) {$i$};
\node at (2,-0.5) {$j$};
\node at (3,-0.5) {$k$};

\node at (1,3) {$\bullet$};
\node at (2,0) {$\bullet$};
\node at (3,1) {$\bullet$};

\node at (1.5,1.5) {$A$};

\node at (0.5,0.5) {$\emptyset_1$};
\node at (0.5,1.5) {$\emptyset_1$};
\node at (0.5,2.5) {$\emptyset_1$};
\node at (1.5,0.5) {$\emptyset_2$};
\node at (1.5,2.5) {$\emptyset_1$};
\node at (0.5,3.5) {$\emptyset_1$};
\node at (0.5,4.5) {$\emptyset_1$};
\node at (1.5,3.5) {$\emptyset_1$};
\node at (1.5,4.5) {$\emptyset_1$};
\node at (2.5,0.5) {$\emptyset_2$};
\node at (2.5,1.5) {$\emptyset_3$};
\node at (2.5,2.5) {$\emptyset_1$};
\node at (2.5,3.5) {$\emptyset_1$};
\node at (2.5,4.5) {$\emptyset_1$};
\end{tikzpicture}
\caption{The diagram of the element $w \in \D_n$ considered in the proof of Proposition~\ref{prop:132-bound}.}
\label{fig:132-bound}
\end{figure}

In the diagram for $w$ shown in Figure~\ref{fig:132-bound}, the regions marked $\emptyset$ are empty for the following reasons.
\begin{itemize}
    \item $\emptyset_1$: if there were a $\bullet$ here, then $w$ would contain $4231$, after taking into account the negative positions of $w$.
    \item $\emptyset_2$: if there were a $\bullet$ here, then $c$ would not be minimal, as assumed.
    \item $\emptyset_3$: if there were a $\bullet$ here, then $a \bar{c} \bar{b}$ would not realize $\mg(w)$.
\end{itemize}
The region $A$ may be nonempty, but, by $4231$ avoidance, can contain only a decreasing sequence.

Let $J_1=\{0,1,2,\ldots,k-1\}$ and $J_2=\{0,1,2,\ldots,c-1\}$ (using the numbering of the Dynkin diagram from Figure~\ref{fig:dynkin-diagrams}). Let $u=\prescript{}{J_2}{(w_{J_1})}$. By Theorem~\ref{thm:h-increases-in-parabolic} and (\ref{eq:h-increases-left-parabolic}) we have $h(w) \leq h(u)$, and by the above analysis of the diagram in Figure~\ref{fig:132-bound}, $u$ has window $[u(1), \ldots, u(m)]=[\pm 1, \bar{2}, \bar{3}, \ldots, \overline{m{-}2}, \overline{m}, \overline{m{-}1}].$ That is, $u=w_0(\D_m) s_{m-1}$. Here $m=3+|A| \leq b+1$.

We work now within $W=\D_m$. Let $K=\{0,1,2,\ldots,m-2\}$, then the corresponding parabolic decomposition has $u^K=s_{m-1} w_0^K$ and $u_K=w_0(K)$ and is a BP-decomposition. Since $L(w_0(K))$ is palindromic (as the Poincar\'{e} polynomial of the smooth Type $D_{m-1}$ flag variety), we have $h(u)=h(L^K(u^K))$ by Lemma~\ref{lem:min-of-h}. The variety $G/P_K=X^K(w_0^K)$ is a $(2m-2)$-dimensional quadric with Poincar\'{e} polynomial \[L^K(w_0^K)=(1+q+\cdots + q^{2m-2})+q^{m-1}\] (see e.g. \cite{Thomas-Yong}). The Schubert variety $X^K(u^K)$ is the closure of the unique complex $(2m-3)$-dimensional cell in $G/P_K$ and thus we have $L^K(u^K)=(1+q+\cdots+q^{2m-3})+q^{m-1}$. We conclude, as desired, that \[h(w) \leq h(u) = h(L^K(u^K))=m-2 \leq b-1 = \mg(w)-1.\]
\end{proof}

\begin{prop}
\label{prop:type-D-avoids-all-but-3412}
Let $W=\D_n$ for $n \geq 5$, let $J=S \setminus \{1\}, J'=S \setminus \{0\}, K=S \setminus \{n-1\}$, and suppose $w \in \D_n$ is singular, but satisfies:
\begin{itemize}
    \item[(i)] $w$ avoids $4231, \pm 1 \bar{3} \bar{2}, \pm 1 2 \bar{3}, \pm 1 4 \bar{3} 2,$ and neither $w$ nor $\varepsilon_D(w)$ contains any occurrences of $3412$ of height one,
    \item[(ii)] $w_J,w_{J'},w_K,\prescript{}{J}{w},\prescript{}{J'}{w},\prescript{}{K}{w}$ are smooth.
\end{itemize}
Then $w=u \coloneqq [n, 2, \bar{3}, \bar{4}, \ldots, \overline{n{-}1}, \pm 1]$ or $w=\varepsilon_D(u)$.
\end{prop}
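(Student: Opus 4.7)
By Proposition~\ref{prop:hybrid-patterns}, the singularity of $w$ combined with the avoidance of $4231, \pm 1 2 \bar 3, \pm 1 \bar 3 \bar 2,$ and $\pm 1 4 \bar 3 2$ forces $w$ to contain an occurrence of $3412$ at some positions $a < b < c < d$ in $\pm[n]$ with distinct absolute values and $w(c) < w(d) < w(a) < w(b)$; by hypothesis (i) every such occurrence has height at least $2$. The plan is to use the six smoothness hypotheses in (ii) to progressively localize this $3412$ pattern and then force all remaining entries of $w$ to take the values prescribed by $u$.

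The key tool is the flattening map from Proposition~\ref{prop:flattening-map}. If the $3412$ occurrence in $w$ is supported on a Class I reflection subgroup $W' \cong \S_4$ that happens to sit inside the parabolic $W_K$ for $K = S \setminus \{n-1\}$, then by the identity $\inv'(\fl(w)) = \inv(w) \cap \Phi'$ and the description $\fl(w) = w_K$ for parabolic flattening, the same $3412$ pattern must appear in $w_K$, contradicting its smoothness. The reflections in $W'$ all lie in $W_K$ precisely when $|a|, |b|, |c|, |d| \neq n$, so the pattern must use either $a = -n$ or $d = n$. Since smoothness passes to inverses (Proposition~\ref{prop:kl-of-inverse}), applying the same argument to $(w^{-1})_K = (\prescript{}{K}{w})^{-1}$ forces $\pm n$ to appear as a value in the pattern, giving $w(b) = n$ or $w(c) = -n$.

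Repeating the flattening argument for $J = S\setminus\{1\}$ and $J' = S\setminus\{0\}$ (both type $A_{n-1}$ parabolics inside $\D_n$) gives constraints from the other side of the Dynkin diagram. The subgroup $W_{J'}$ is the standard $\S_n$ inside $\D_n$, with only sign-preserving reflections $r_{e_j - e_i}$, so smoothness of $w_{J'}$ forces the positions $\{a,b,c,d\}$ to include both signs. The subgroup $W_J$ is also type $A_{n-1}$ but its reflections form $\{r_{e_i+e_1}: i \geq 2\} \cup \{r_{e_j-e_i}: 2 \leq i<j\}$, and the analogous analysis forces the pattern to involve a position of absolute value $1$, paired with at least one other position of the same sign. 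Applying the inverse versions via $\prescript{}{J}{w}$ and $\prescript{}{J'}{w}$ and absorbing one of two symmetric cases into the $\e_D$-symmetry, the $3412$ is localized to positions $\{-(n-1), 1, 3, n\}$ with values $\{n-1, n, -3, \pm 1\}$; the height-at-least-$2$ condition ensures that $w(c) = -3$ rather than something closer to $\pm 1$.

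With the $3412$ thus pinned down, I claim each remaining entry is forced: $w(2) = 2$ and $w(i) = -i$ for $4 \leq i \leq n-2$. Any deviation would either create a second $3412$ of height one (contradicting (i) applied to $w$ or $\e_D(w)$), introduce one of the forbidden patterns from Proposition~\ref{prop:hybrid-patterns}, or make one of the six parabolic factors singular. The parity condition for $\D_n$ then fixes the sign of $w(n) \in \{\pm 1\}$, yielding $w = u$ or $w = \e_D(u)$. The main obstacle is this final rigidification step: it requires a careful case analysis of each unfixed position, using either an inductive peeling argument that iteratively restricts to smaller parabolic subgroups, or a direct diagram analysis analogous to those in the proofs of Propositions~\ref{prop:132-bound} and~\ref{prop:type-D-4231-upper-bound}.
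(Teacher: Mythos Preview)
Your $K$-parabolic step is correct and matches the paper's first claim: any $3412$ must use a position $\pm n$ and (by passing to $w^{-1}$) a value $\pm n$. Your $J'$-step is also fine: if all four positions were in $[n]$ the standardization $w_{J'}\in\S_n$ would inherit the pattern. But the argument breaks down at the $J$-step, and this is a genuine gap rather than a detail to fill in.

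Your claim that smoothness of $w_J$ ``forces the pattern to involve a position of absolute value $1$'' does not follow from flattening alone. Under $\e_D$ one has $\e_D(w_J)=(\e_D w)_{J'}$, so smoothness of $w_J$ is equivalent to smoothness of $(\e_D w)_{J'}$. If no position of the $3412$ has absolute value $1$, then $\e_D$ fixes those positions, so the pattern in $\e_D w$ still straddles both signs and does \emph{not} descend to $(\e_D w)_{J'}$. Hence the $J$-hypothesis gives no new constraint on this particular occurrence. The paper's proof of Claim~\ref{claim:j-b-pm-1} does not try to push the original $3412$ into a parabolic; instead it asks where the value $\pm 1$ actually sits and manufactures \emph{new} $3412$ or $4231$ occurrences (in $\prescript{}{J}{w}$, $\prescript{}{J'}{w}$, $w_K$, or in $w$ itself) depending on its location. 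The $4231$-avoidance and the height $\geq 2$ hypotheses are used repeatedly in that case analysis, not only at the rigidification stage.

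The subsequent jump to ``positions $\{-(n-1),1,3,n\}$ with values $\{n-1,n,-3,\pm 1\}$'' is therefore unjustified, and in fact cannot be right as stated: the target element $u=[n,2,\bar 3,\ldots,\overline{n{-}1},\pm 1]$ contains many $3412$ occurrences (e.g.\ at positions $-3<1<k<n$ for any $3\leq k\leq n-1$), so the hypotheses cannot pin down a unique one. What the paper actually extracts is only that a \emph{justified} occurrence $w(i)w(j)w(k)w(n)=c\, n\, a\, b$ has $j=\pm 1$ and $b=\pm 1$; the value $c=3$ and the remaining entries are then forced by the diagram analysis in Figure~\ref{fig:avoid-all-but-3412-proof}, where again $4231$-avoidance does the heavy lifting (the regions $A$ and $A'$ would otherwise produce a $4231$). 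Your sketch underestimates how much of the work lives in this middle step and how essential the avoidance hypotheses are to it.
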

\begin{proof}

Call any occurrence $cdab=w(i)w(j)w(k)w(\ell)$ of $3412$ with $d=w(j)=n$ and $\ell=n$ \emph{justified}.

\begin{claim}
There is a justified occurrence of $3412$ in $w$.
\end{claim}
\begin{proof}
Since $w$ is singular, but avoids the specified patterns, $w$ must contain $3412$ by Theorem~\ref{thm:bp-smoothness}. Since $w_K$ and $\prescript{}{K}{w}$ are smooth, any occurrence of $3412$ must use an index $\pm n$ as well as a value $\pm n$. Any such occurrence is related by the symmetry $w(-i)=-w(i)$ to a justified occurrence or to one of the form $w(-n)w(i)w(j)w(k)=cnab$. In this case, if $-c<a$, then $nab\bar{c}$ is an occurrence of $4231$ in $w$, and if $-k>i$, then $cab\bar{n}$ is an occurrence of $4231$. Since this contradicts our assumption, we must instead have $-c>a$ and $-k<i$; in this case $\bar{b}na\bar{c}$ is a justified occurrence of $3412$. 
\end{proof}

\begin{claim}
\label{claim:j-b-pm-1}
Let $cnab=w(i)w(j)w(k)w(n)$ be a justified occurrence of $3412$ in $w$. Furthermore assume that $c$ is minimal among such occurrences. Then $j= \pm1$ and $b=\pm 1$.
\end{claim}
\begin{proof}
Suppose that $b>1$, and let $m \in \{1,\ldots, n\}$ be the index such that $|w(m)|=1$. If $j<m$, then $cnw(m)b$ is an occurrence of $3412$ in $\prescript{}{J}{w}$ or $\prescript{}{J'}{w}$, a contradiction. Thus we must have $0<m<j$. Now, since $w$ does not contain a height one $3412$, we know $c>b+1$. Let $m'=w^{-1}(b+1)$. By the minimality of $c$, we cannot have $m' < j$. Nor can we have $m'>k$, or we would have a $3412$ in $w_K$. But now we have a $4231$ in $w$ in positions $imm'k$, a contradiction.

So suppose instead that $b<-1$ and again let $m \in \{1,\ldots, n\}$ be the index such that $|w(m)|=1$. If $k<m$, then there is a $3412$ in positions $ijkm$ unless $c=\pm 1$ and if $k>m$, then we have $\pm1 \bar{3} \bar{2}$ in positions $mkn$. Neither of these is allowed by assumption, so we must have $k<m$ and $c=\pm 1$. Since $w$ and $\varepsilon_D(w)$ have $\mh > 1$ we have $b \leq -3$. Consider $m'=w^{-1}(b+1)$. We have $m'>j$, otherwise positions $m'jkn$ would contain a $3412$ of height one. We must also have $m'<k$ or we would have a $3412$ in $\prescript{}{K}{w}$. We cannot have $0<m'<k$ or we would have $\bar{1}\bar{3}\bar{2}$ with values $(b+1)ab$. Thus we must have $j<m'<0$. But now we have an occurrence of $4231$ in positions $jm'mn$, a contradiction. Thus $b=\pm 1$. Applying the same argument to $w^{-1}$ shows that $j = \pm 1$ as well.
\end{proof}

In the setting of Claim~\ref{claim:j-b-pm-1}, suppose $j=1$ (otherwise, apply $\varepsilon_D$) and assume we have chosen a justified occurrence with $c$ minimal and with $a$ maximal among occurrences with $w(i)=c$.  We now argue that $w=u$. We must have $i<-1$ and $a<-1$ or one of $w_J,w_{J'},\prescript{}{J}{w},\prescript{}{J'}{w}$ would contain the occurrence of $3412$ and thus not be smooth, contrary to our hypotheses. We must also have $|a|>c$, or $|a| n \bar{c} b$ would contradict the minimality of $c$ in $cnab$. Consider the permutation diagram for $w$, drawn in Figure~\ref{fig:avoid-all-but-3412-proof}.

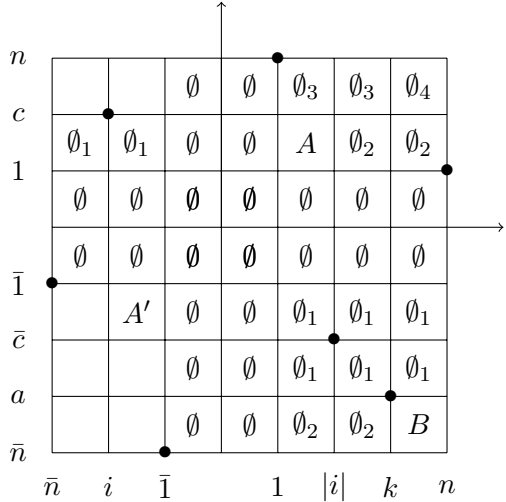
\begin{figure}[h!]
\centering
\begin{tikzpicture}[scale=0.75]
\draw[step=1.0] (-4,-4) grid (3,3);
\node at (-4,-1) {$\bullet$};
\node at (-3,2) {$\bullet$};
\node at (-2,-4) {$\bullet$};
\node at (0,3) {$\bullet$};
\node at (1,-2) {$\bullet$};
\node at (2,-3) {$\bullet$};
\node at (3,1) {$\bullet$};
\draw[->](3,0)--(4,0);
\draw[->](-1,0)--(-1,4);
\node at (0,-4.6) {$1$};
\node at (1,-4.6) {$|i|$};
\node at (2,-4.6) {$k$};
\node at (3,-4.6) {$n$};
\node at (-2,-4.6) {$\bar{1}$};
\node at (-3,-4.6) {$i$};
\node at (-4,-4.6) {$\bar{n}$};
\node at (-4.6,-4) {$\bar{n}$};
\node at (-4.6,-3) {$a$};
\node at (-4.6,-2) {$\bar{c}$};
\node at (-4.6,-1) {$\bar{1}$};
\node at (-4.6,1) {$1$};
\node at (-4.6,2) {$c$};
\node at (-4.6,3) {$n$};

\node at (-3.5,0.5) {$\emptyset$};
\node at (-2.5,0.5) {$\emptyset$};
\node at (-1.5,0.5) {$\emptyset$};
\node at (-0.5,0.5) {$\emptyset$};
\node at (0.5,0.5) {$\emptyset$};
\node at (1.5,0.5) {$\emptyset$};
\node at (2.5,0.5) {$\emptyset$};
\node at (-3.5,-0.5) {$\emptyset$};
\node at (-2.5,-0.5) {$\emptyset$};
\node at (-1.5,-0.5) {$\emptyset$};
\node at (-0.5,-0.5) {$\emptyset$};
\node at (0.5,-0.5) {$\emptyset$};
\node at (1.5,-0.5) {$\emptyset$};
\node at (2.5,-0.5) {$\emptyset$};

\node at (-0.5,2.5) {$\emptyset$};
\node at (-0.5,1.5) {$\emptyset$};
\node at (-0.5,0.5) {$\emptyset$};
\node at (-0.5,-0.5) {$\emptyset$};
\node at (-0.5,-1.5) {$\emptyset$};
\node at (-0.5,-2.5) {$\emptyset$};
\node at (-0.5,-3.5) {$\emptyset$};
\node at (-1.5,2.5) {$\emptyset$};
\node at (-1.5,1.5) {$\emptyset$};
\node at (-1.5,0.5) {$\emptyset$};
\node at (-1.5,-0.5) {$\emptyset$};
\node at (-1.5,-1.5) {$\emptyset$};
\node at (-1.5,-2.5) {$\emptyset$};
\node at (-1.5,-3.5) {$\emptyset$};

\node at (-3.5,1.5) {$\emptyset_1$};
\node at (-2.5,1.5) {$\emptyset_1$};
\node at (2.5,1.5) {$\emptyset_2$};
\node at (1.5,1.5) {$\emptyset_2$};
\node at (0.5,2.5) {$\emptyset_3$};
\node at (1.5,2.5) {$\emptyset_3$};
\node at (0.5,1.5) {$A$};
\node at (-2.5,-1.5) {$A'$};
\node at (2.5,2.5) {$\emptyset_4$};
\node at (0.5,-1.5) {$\emptyset_1$};
\node at (1.5,-1.5) {$\emptyset_1$};
\node at (2.5,-1.5) {$\emptyset_1$};
\node at (0.5,-2.5) {$\emptyset_1$};
\node at (1.5,-2.5) {$\emptyset_1$};
\node at (2.5,-2.5) {$\emptyset_1$};
\node at (0.5,-3.5) {$\emptyset_2$};
\node at (1.5,-3.5) {$\emptyset_2$};
\node at (2.5,-3.5) {$B$};
\end{tikzpicture}
\caption{The diagram analysis for the justified occurrence $w(i)w(1)w(k)w(n)=cna {\pm1}$ of $3412$ in $w$. The diagram is drawn with $|i|<k$ and $b=1$, but if $|i|>k$ or $b=-1$ the analysis is not materially affected.}
\label{fig:avoid-all-but-3412-proof}
\end{figure}

The indicated regions of the permutation diagram for $w$ shown in Figure~\ref{fig:avoid-all-but-3412-proof} are empty for the following reasons:
\begin{itemize}
    \item $\emptyset_1$: if there were a $\bullet$ here, then the minimality of $c$ would be contradicted.
    \item $\emptyset_2$: if there were a $\bullet$ here, then $w_K$ would contain $3412$.
    \item $\emptyset_3$: if there were a $\bullet$ here, then $\prescript{}{K}{w}$ would contain $3412$.
\end{itemize}

Now, consider the region $A$. Since all other regions in the same row are known to be empty, there must be $c-2$ $\bullet$'s inside $A$. The region $A'$ has the same number of $\bullet$'s, since it is obtained from $A$ by negating indices and values. Thus if $c-2>1$, there is a $4231$ pattern in $w$ with values $c \bar{3} 2 a$. This violates our hypotheses, so $c \leq 3$. But we have also assumed that $w$ contains no $3412$ of height one, so $c=3$. We now see:
\begin{itemize}
    \item $\emptyset_4$: if there were a $\bullet$ here, then $w$ would contain $4231$.
\end{itemize}

Finally, region $B$ must contain a decreasing sequence, since any ascent within $B$ would lead to a $3412$ in $w_K$. We conclude, as desired, that
\[w= [n, 2, \bar{3}, \bar{4}, \ldots, \overline{n{-}1}, \pm 1]=u.\] 
\end{proof}

We are now ready to complete the proof of Theorem~\ref{thm:simply-laced-upper-bound}, resolving Conjecture~\ref{conj:bp-conjecture}.

\begin{proof}[Proof of Theorem~\ref{thm:simply-laced-upper-bound}]   
First suppose $G=\SL_{r+1}$, and let $w \in W(A_r)=\S_{r+1}$ such that $X_w$ is singular. By Theorem~\ref{thm:bp-smoothness}, $w$ contains $4231$ or $3412$. If $w$ contains $4231$, then $h(w)=1$ by Proposition~\ref{prop:h-equals-1}. Otherwise $w$ avoids $4231$ and contains $3412$, so $h(w) \leq \mh(w)$ by Lemma~\ref{lem:typeA-3412-upper}. It is clear by definition that $\mh(w) \leq r-2$ for any $w$, so we are done.

Now suppose $G=\SO_{2r}$ for $r \geq 5$, and let $w \in W(D_r)=\D_r$. Suppose by induction that the claim is true for $G=\SO_{2r'}$ for $r' < r$ (the base case $r'=4$ is covered by the computations in \cite{Billey-Postnikov}). If $w$ contains $4231$, then $h(w) \leq 2 \leq r-2$ by Proposition~\ref{prop:type-D-4231-upper-bound}, so we may assume that $w$ avoids $4231$. Then by Proposition~\ref{prop:132-bound}, if $w$ contains $\pm 1 \bar{3} \bar{2}$ we have $h(w) \leq \mg(w) \leq r-2$. If $w$ contains any of the patterns from Proposition~\ref{prop:h-equals-1}, then $h(w)=1 \leq r-2$. Let $J=S \setminus \{2\}, J'=S \setminus \{0\}, K=S \setminus \{r-1\}$; if any of $w_J,w_{J'},w_K,\prescript{}{J}{w},\prescript{}{J'}{w},\prescript{}{K}{w}$ is singular, then by the type $A$ result, or by the induction hypothesis, we have $h(w) \leq r-3$. Finally, if $w$ does not fall into any of the above cases, then $w$ satisfies the hypotheses (i) and (ii) of Proposition~\ref{prop:type-D-avoids-all-but-3412}, so $w=u \coloneqq [r, 2, \bar{3}, \bar{4}, \ldots, \overline{r{-}1}, \pm 1]$ or $w=\varepsilon_D(u)$.

We will now compute $h(u)=h(\varepsilon_D(u))$; suppose for convenience that $r$ is even, the other case being exactly analogous. Let $I=\{1,2\ldots,r-2\}$, then we have $u_I=w_0(I)$ is the longest element of $\S_{r-1}$, so $h(u_I)=\infty$. Thus we need to compute $h(L^I(u^I))$ with $u^I=[\overline{r{-}1}, \ldots, \bar{4},\bar{3},2,r,\bar{1}]$. Notice $\ell(u^I)=N \coloneqq \frac{1}{2}(r^2-3r+4)$ with reduced word:
\[
s_0(s_2s_0)(s_3s_2s_1) \cdots (s_{r-4}s_{r-5}\cdots s_3s_2s_0)(s_{r-3}\cdots s_3s_2s_1)(s_{r-2}\cdots s_3s_2s_0)s_{r-1}.
\]

We claim that $L^I(u^I)=1+2q+3q^2+ \cdots + aq^{N-2}+2q^{N-1}+q^N$, with $a \geq 4$, so that $h(u)=h(L^I(u^I))=2 < r-2$. Indeed, the elements of length one in $[e,u^I]^I$ are $\{s_0, s_{r-1}\}$, the elements of length two are $\{s_0s_{r-1}, s_2s_0, s_{r-2}s_{r-1}\}$, and the elements of length $N-1$ are $\{s_0u^I, s_2u^I\}$. Consider the four elements $z_1=s_0s_2 u^I, z_2=s_2s_0u^I, z_3=s_0u^Is_{r-1}, z_4=s_3s_2u^I$. It is easy to check for $i=1,2,3,4$ that $\ell(z_i)=N-2$, that $z_i \leq u^I$ (by Theorem~\ref{thm:subword-bruhat}), and that $z_i \in W^I$; thus $a \geq 4$ as desired.
\end{proof}

\section{Exact formula when $G=\SL_n$}
\label{sec:lower-bound}
For $w\in\S_n$, we have proved one direction of Theorem~\ref{thm:type-A-formula}, the upper bound, in Section~\ref{sub:typeA-upper-bound}. In this section, we establish the other direction.
\begin{lemma}\label{lem:typeA-3412-lower}
Suppose that $w\in\S_n$ avoids $4231$ and contains $3412$. Then $h(w)\geq\mh(w)$.
\end{lemma}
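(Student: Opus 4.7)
The plan is to prove $h(w) \geq \mh(w)$ by strong induction on $\ell(w)$, combining Theorem~\ref{thm:bjorner-ekedahl} (to translate the claim into a palindromicity statement on the Poincaré polynomial $L(w)$ in its bottom $\mh(w)$ coefficients) with the BP-decomposition fiber-bundle formula (Theorem~\ref{thm:bp-decomposition-fiber-bundle}) and the multiplicativity of $h$ for top-heavy polynomials (Lemma~\ref{lem:min-of-h}).

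First I would dispose of small values of $\mh(w)$. The case $\mh(w) = 1$ is trivial since $h(w) \geq 1$ whenever $X_w$ is singular. For $\mh(w) = 2$, the hypotheses exclude both type-$A$ sources of $h(w) = 1$ identified in Proposition~\ref{prop:h-equals-1}: containment of $4231$ (ruled out by assumption) and containment of a height-one $3412$ (ruled out by $\mh(w) \geq 2$). Since the patterns $\pm 1 2 \bar{3}$ and $\pm 1 4 \bar{3} 2$ are not meaningful in $\S_n$, I would argue via Lemma~\ref{lem:relation-corank1} that any linear dependence among cover labels below $w$ must be traceable, as in the proof of Proposition~\ref{prop:h-equals-1}, to one of these forbidden patterns, contradicting our hypotheses. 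Hence the cover labels are linearly independent and $h(w) \geq 2$.

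For the inductive step with $\mh(w) = k \geq 3$, I would mirror the reduction strategy from Lemma~\ref{lem:typeA-3412-upper}. Using the fact that embedding $w \in \S_{n-1}$ inside $\S_n$ preserves both $h$ and $\mh$, and using $h(w) = h(w^{-1})$ from Proposition~\ref{prop:kl-of-inverse}, one reduces to the irreducible case in which $w(1), w(n), w^{-1}(1), w^{-1}(n)$ are all nontrivial and every minimum-height $3412$ uses the extremal indices $1$ and $n$ in both $w$ and $w^{-1}$. In this case I would seek a maximal-parabolic Billey--Postnikov decomposition $w = w^J w_J$ with $X_{w_J}$ smooth, so that Theorem~\ref{thm:bp-decomposition-fiber-bundle} gives $L(w) = L^J(w^J) L(w_J)$ with $L(w_J)$ palindromic; then Lemma~\ref{lem:min-of-h} yields $h(w) = h(L^J(w^J))$, and the analysis reduces to a smaller problem on the partial flag variety $G/P_J$, for which an inductive computation of the low rank sizes of $[e, w^J]^J$ (together with the inequality $\mh(w^J) \geq k$ inherited from $w$) concludes the argument. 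If no such BP-decomposition exists for the irreducible $w$, the diagram analysis of Lemma~\ref{lem:typeA-3412-upper} forces $w$ to be the extremal permutation of Lemma~\ref{lem:typeA-extremal-case}, where $h(w) = n-3 = \mh(w)$ was computed explicitly.

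The main obstacle is the BP-reduction step: one must show that a suitable BP-decomposition exists in the irreducible non-extremal case and that the resulting factor $w^J$ still satisfies $\mh(w^J) \geq \mh(w)$, so that the induction actually closes. This amounts to a structural lemma about $4231$-avoiding permutations and how their minimum-height $3412$ occurrences interact with parabolic projections; establishing this, together with the direct rank-symmetry check for $[e, w^J]^J$ in its bottom $k-1$ ranks, is where the real combinatorial work lies.
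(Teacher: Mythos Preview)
Your reduction step borrows the mechanism from the upper-bound proof (Lemma~\ref{lem:typeA-3412-upper}), but the inequality runs the wrong way here. For the upper bound one uses $h(w)\le h(w_J)$ (Theorem~\ref{thm:h-increases-in-parabolic}) together with $\mh(w_J)=\mh(w)$ to pass $h\le\mh$ down from $w_J$ to $w$. For the lower bound you need $h(w)\ge\mh(w)$; knowing $h(w_J)\ge\mh(w_J)$ by induction and $h(w)\le h(w_J)$ gives nothing. So you cannot reduce to an ``irreducible'' case in which every minimum-height $3412$ uses extremal indices and values---that reduction simply does not close for a lower bound. The fallback to the extremal permutation of Lemma~\ref{lem:typeA-extremal-case} then only handles one very special $w$, not a residual class produced by a valid reduction. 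Your $\mh(w)=2$ step has a separate gap: Proposition~\ref{prop:h-equals-1} shows that certain patterns force a dependence among cover labels, but you are invoking the converse (no such pattern $\Rightarrow$ independence), which is not what that proposition proves.

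The paper's proof is organized quite differently. It always takes $J=\{2,\dots,n-1\}$, for which $L^J(w^J)=1+q+\cdots+q^{w(1)-1}$ is palindromic regardless of $w$. When the region $B$ (entries right of $w^{-1}(1)$ with value below $w(1)$) is empty, this parabolic decomposition happens to be BP, so $h(w)=h(w_J)$ \emph{exactly}, and no $3412$ in $w$ uses index~$1$, giving $\mh(w_J)=\mh(w)$; induction on $n$ then applies directly to $w_J$. The substantive case is $B\neq\emptyset$ and $C\neq\emptyset$, where the decomposition is \emph{not} BP. Here the paper does not search for another decomposition or reduce further; instead it analyzes the defect $U=[e,w]\setminus\bigl([e,w^J]^J\cdot[e,w_J]\bigr)$ directly, proving via the tableau criterion for Bruhat order that every $u\in U$ has $\ell(u)\ge\mh(w)+1$ and $\ell(w)-\ell(u)\ge\mh(w)$. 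Thus the extra monomials in $L(w)-L^J(w^J)L(w_J)$ are bounded away from both ends, and palindromicity of $L(w)$ up to degree $\mh(w)-1$ is inherited from $L^J(w^J)L(w_J)$, whose $h$ is at least $\mh(w)$ by the inductive hypothesis applied to $w_J$. This defect-set analysis is the idea your plan is missing.
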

\begin{proof}
We use induction on $n$. The statement is true for $n=4$, where $h(3412)=\mh(3412)=1$.

In the permutation diagram for $w$, consider the points $(1,w(1))$ and $(w^{-1}(1),1)$ and the following three regions:
\begin{align*}
A=&\{(i,w(i))\:|\: 1<i<w^{-1}(1),\ 1<w(i)<w(1)\},\\
B=&\{(i,w(i))\:|\: i>w^{-1}(1),\ 1<w(i)<w(1)\},\\
C=&\{(i,w(i))\:|\: 1<i<w^{-1}(1),\ w(i)>w(1)\}
\end{align*}
as shown in Figure~\ref{fig:3412-lower-bound}.
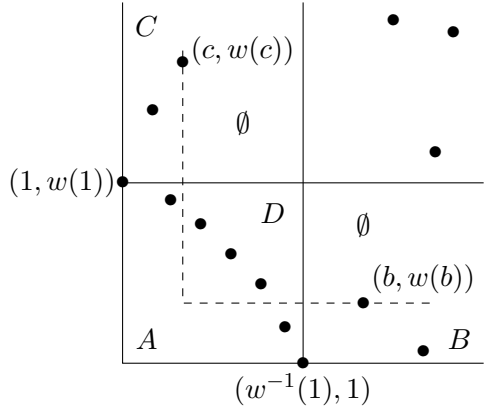
\begin{figure}[h!]
\centering
\begin{tikzpicture}[scale=0.8]
\draw(0,6)--(0,0)--(6,0);
\draw(3,0)--(3,6);
\draw(0,3)--(6,3);
\node at (0,3) {$\bullet$};
\node at (3,0) {$\bullet$};
\draw[dashed] (1,5.2)--(1,1)--(5.2,1);
\node at (1,5) {$\bullet$};
\node at (2,5.2) {$(c,w(c))$};
\node at (4,1) {$\bullet$};
\node at (5,1.4) {$(b,w(b))$};
\node at (0.8,2.7) {$\bullet$};
\node at (1.3,2.3) {$\bullet$};
\node at (1.8,1.8) {$\bullet$};
\node at (2.3,1.3) {$\bullet$};
\node at (2.7,0.6) {$\bullet$};
\node at (0.4,0.4) {$A$};
\node at (5.6,0.4) {$B$};
\node at (0.4,5.6) {$C$};
\node at (0.5,4.2) {$\bullet$};
\node at (2,4) {$\emptyset$};
\node at (4,2.3) {$\emptyset$};
\node at (5,0.2) {$\bullet$};
\node at (5.2,3.5) {$\bullet$};
\node at (5.5,5.5) {$\bullet$};
\node at (4.5,5.7) {$\bullet$};
\node at (-1,3) {$(1,w(1))$};
\node[below] at (3,0) {$(w^{-1}(1),1)$};
\node at (2.5,2.5) {$D$};
\end{tikzpicture}
\caption{The permutation diagram of $w$ and the indicated regions, as discussed in the proof of Lemma~\ref{lem:typeA-3412-lower}.}
\label{fig:3412-lower-bound}
\end{figure}
Since $w$ avoids $4231$, the elements in $A$ must be in decreasing.

If $B=\emptyset$, letting $J=\{2,3,\ldots,n-1\}$ we have $w^J=m123\cdots n=s_{m-1}s_{m-2}\cdots s_2s_1$ where $m=w(1)$. At the same time, when $B=\emptyset$, $w_J(1)=1$ and $2,3,\ldots,m$ appear in reversed order in $w_J$, meaning that $s_2,\ldots,s_{m-1}\in D_L(w_J)$. Thus, $\supp(w^J)\cap J\subset D_L(w_J)$, and $w=w^Jw_J$ is a BP-decomposition, so $L(w)=L^J(w^J)L(w_J)$ by Theorem~\ref{thm:bp-decomposition-fiber-bundle}. As $L^J(w^J)=1+q+\cdots+q^{m-1}$ is palindromic, by Lemma~\ref{lem:min-of-h} we have $h(w)=h(w_J)$. Moreover, $B=\emptyset$ implies that no $3412$ uses $(1,w(1))$ so $\mh(w)=\mh(w_J)$. By induction, $h(w)=h(w_J)\geq\mh(w_J)=\mh(w)$. Similarly, if $C=\emptyset$, we can apply the induction hypothesis to $w^{-1}$.

From now on, assume that both $B$ and $C$ are nonempty. Let $(c,w(c))\in C$ with $c$ maximal, and let $(b,w(b))\in B$ with $w(b)$ maximal. The $3412$ occurrence at indices $1<c<w^{-1}(1)<b$ has content $|D|+1$, where \[D=\{(i,w(i))\:|\: c<i<w^{-1}(1),\ w(b)<w(i)<w(1)\}\subset A.\] 
In fact, by construction, $D=\{(p,q),(p+1,q-1),\ldots,(p+d-1,q-d+1)\}$ with $d=|D|$, $p=c+1$, and $q-d+1=w(b)+1$. 

Let $J=\{2,3,\ldots,n-1\}$. For the rest of the proof, we will carefully study the set \[U:=[e,w]\setminus \left([e,w^J]^J\cdot[e,w_{J}] \right) \neq\emptyset.\] In other words, $u\in U$ if $u\leq w$ but $u_J\nleq w_J$. The key is to show that any $u\in U$ cannot be too close to $e$, or too close to $w$.

Let us recall one version of the tableau criterion for the Bruhat order (see Theorem 2.6.3 of \cite{Bjorner-Brenti}). For two sets $X,Y\subset[n]$ with $|X|=|Y|$, we say $X\leq Y$ in \emph{Gale order} if after sorting $X=\{x_1<\cdots<x_k\}$ and $Y=\{y_1<\cdots<y_k\}$, we have $x_i\leq y_i$ for $i=1,\ldots,k$. For $u\in\S_n$, write $u[i:j]$ for $\{u(i),u(i+1),\ldots,u(j)\}$. Then $u\leq v$ if and only if $u[1:k]\leq v[1:k]$ for all $k=1,2,\ldots,n$. 
\begin{claim}\label{claim:typeA-lower-top-elements}
For $u\in U$, $\ell(w)-\ell(u)\geq|D|+1$.
\end{claim}
\begin{proof}
If $D=\emptyset$, then clearly $\ell(w)-\ell(u)\geq1$ since $u\leq w$ and $w\notin U$. 

For $D\neq\emptyset$, let $w'=s_{w(1)-1}w\lessdot w$. We first show that $u\leq w'$. Pictorially, $w'$ is obtained from $w$ by swapping $(1,w(1))$ with the permutation entry closest to $(1,w(1))$ in $A$. Assume to the contrary that $u\nleq w'$, then there exists $r$ such that $u[1:r]\nleq w'[1:r]$. As $w'[1:k]=w[1:k]$ for $k\geq w^{-1}(w(1)-1)$, we need $r<w^{-1}(w(1)-1)$. By construction, after sorting from smallest to largest, $w'[1:r]=\{w(1)-1<\cdots\}$ and $w[1:r]=\{w(1)<\cdots\}$ only differ at the smallest value. In order for $u[1:r]\nleq w'[1:r]$ and $u[1:r]\leq w[1:r]$, the smallest value in $u[1:r]$ is at least $w(1)$. At the same time, $u(1)\leq w(1)$ so we must have $u(1)=w(1)$. However, $u(1)=w(1)$ and $u\leq w$ implies that $u_J\leq w_J$ for $J=\{2,\ldots,n-1\}$, contradicting $u\in U$. 

As $w'_J=w_J$, we now have $u\leq w'$ and $u_J\nleq w_J'$. Going through the same argument for $w'$ instead of $w$, we arrive at $u\leq v:=s_{w(b)+1}\cdots s_{w(1)-2}s_{w(1)-1}w$, after swapping $(1,w(1))$ with every permutation entry in $D$. Now $\ell(w)-\ell(v)=w(1)-w(b)-1\geq|D|$. Since $u\leq v$ and $u_J\nleq v_J=w_J$, we have $u\neq v$ and $\ell(v)-\ell(u)\geq1$. Thus, $\ell(w)-\ell(u)\geq|D|+1$ as desired.
\end{proof}

\begin{claim}\label{claim:typeA-lower-bottom-elements}
For $u\in U$, $\ell(u)\geq |D|+2$.
\end{claim}
\begin{proof}
Since $u_J\nleq w_J$, there exists $r$ such that $u_J[1:r]\nleq w_J[1:r]$. Note $u_J(1)=w_J(1)=1$. At the same time, $u_J[1:r]\leq u[1:r]\leq w[1:r]$. First, $r\geq p+d-1$, the index of the last entry in $D$, because for $k\leq p+d-1$, $w_J[1:k]$ and $w[1:k]$ differ only in the smallest value, and with $u_J(1)=1$, we cannot have both $u_J[1:k]\leq w[1:k]$ and $u_J[1:k]\nleq w_J[1:k]$. 

Recall that to obtain the set $w_J[1:r]$ from $w[1:r]$, we can first remove $w(1)$ from $w[1:r]$, add $1$ to all the values less than $w(1)$, and insert $1$. Suppose that there are $m$ values that are at most $w(b)$ in $w[1:r]$, denoted $a_1,\ldots,a_m$. We have the inequality $r-m\geq p+d-1\geq 1+|D|+|C|$. Correspondingly, $a_1+1,a_2+1,\ldots,a_m+1\in w_J[2:r]$ and we also have $w_J(1)=1$. The values greater than $w(1)$ stay the same in $w[1:r]$ and $w_J[1:r]$. Also, the consecutive interval $\{w(b)+1,\ldots,w(1)\}$ is contained in $w[1:r]$, so $\{w(b)+2,\ldots,w(1)\}\subset w_J[1:r]$. Thus, regardless of whether $r\geq b$ or not, $w_J[1:r]$ and $w[1:r]$ can only differ at the smallest $m+1$ values. Since $u_J[1:r]\nleq w_J[1:r]$ and $u_J[1:r]\leq w[1:r]$, we know that $u_J[1:r]$ cannot contain all of $1,2,\ldots,m+1$. Let $j\leq m+1$ be the smallest number that $u_J[1:r]$ does not contain. There are at least $r-m$ values in $u_J[1:r]$ that are greater than $j$, creating at least $r-m$ inversions with $j$. As a result, $\ell(u)\geq\ell(u_J)\geq r-m\geq 1+|D|+|C|\geq|D|+2$ as desired.

As an example for visualization, we can take \begin{align*}
w=&9,11,8,12,7,6,5,3,1,4,14,2,10,13,\\
w_J=&1,11,9,12,8,7,6,4,2,5,14,3,10,13,
\end{align*} as shown in Figure~\ref{fig:3412-lower-bound}. Take $r=9$ with $m=2$, \begin{align*}
\mathrm{sort}(w[1:r])=&1,3,5,6,7,8,9,11,12\\
\mathrm{sort}(w_J[1:r])=&1,2,4,6,7,8,9,11,12.
\end{align*}
Seeing that they agree after the third position, $u_J[1:r]$ cannot contain all of $1,2,3$. The rest follows from a counting argument. 
\end{proof}

In the previous two claims, we established that any $u\in U$ satisfies \[|D|+2\leq \ell(u)\leq \ell(w)-|D|-1\] where $\mh(w)=\mcontent(w)\leq|D|+1$. By the induction hypothesis, $h(w_J)\geq\mh(w_J)=\mcontent(w_J)\geq\mcontent(w)=\mh(w)$ since every $3412$ in $w_J$ is a $3412$ in $w$ with the same content. By Lemma~\ref{lem:min-of-h}, as $L^J(w^J)$ is palindromic, we have $h(L^J(w^J)L(w_J))=h(w_J)\geq\mh(w)$, so the polynomial $L^J(w^J)L(w_J)$ is palindromic up to rank $\mh(w)-1$ (that is, the coefficient of $q^i$ equals the coefficient of $q^{\ell(w)-i}$ for $0\leq i\leq\mh(w)-1$). By the two claims above, every monomial $q^i$ in $L(w)-L^J(w^J)L(w_J)$ is at least $|D|+2\geq\mh(w)+1$ ranks from the bottom and at least $|D|+1\geq\mh(w)$ ranks from the top, meaning that $L(w)$ is still palindromic up to rank $\mh(w)-1$. Therefore, $h(w)\geq\mh(w)$.
\end{proof}

\noindent Theorem~\ref{thm:type-A-formula} now follows from Proposition~\ref{prop:h-equals-1}(i), Lemma~\ref{lem:typeA-3412-upper} and Lemma~\ref{lem:typeA-3412-lower}.

\section*{Acknowledgements}
We are grateful for our very fruitful conversations with Alexander Woo and with Sara Billey. We also thank the anonymous referee for a very careful reading of an earlier version of this manuscript.

\bibliographystyle{plain}
\bibliography{arxiv-v2}
\end{document}